\newcommand{\mc}{\mathcal}
\newcommand{\ph}{\varphi}
\newcommand{\qbinom}[2]{\genfrac{[}{]}{0pt}{}{#1}{#2}}
\newtheorem{thm}{Theorem}
\newtheorem{lem}{Lemma}
\newtheorem{prop}{Proposition}
\newtheorem{cor}{Corollary}
\newtheorem{conj}{Conjecture}
\theoremstyle{definition}
\newtheorem{defin}{Definition}
\newtheorem{example}{Example}
\newtheorem{prob}{Problem}
\theoremstyle{remark}
\newtheorem{remark}{Remark}
\DeclareMathOperator{\spn}{span}
\title{Separable elements and splittings of Weyl groups}
\author{Christian Gaetz}
\thanks{C.G. is partially supported by a National Science Foundation Graduate Research Fellowship under Grant No. 1122374.}
\address{Department of Mathematics, Massachusetts Institute of Technology, Cambridge, MA 02139}
\email{\href{mailto:gaetz@mit.edu}{{\tt gaetz@mit.edu}}}
\author{Yibo Gao}
\email{\href{mailto:gaoyibo@mit.edu}{{\tt gaoyibo@mit.edu}}}
\date{\today}
\begin{document}

\begin{abstract}
We continue the study of \emph{separable elements} in finite Weyl groups, introduced in \cite{First-separable-paper}.  These elements generalize the well-studied class of separable permutations.  We show that the multiplication map $W/U \times U \to W$ is a length-additive bijection or \emph{splitting} of the Weyl group $W$ when $U$ is an order ideal in right weak order generated by a separable element, where $W/U$ denotes the \emph{generalized quotient}.  This generalizes a result for the symmetric group, answering an open problem of Wei \cite{Wei2012Product}.

For a generalized quotient of the symmetric group, we show that this multiplication map is a bijection if and only if $U$ is an order ideal in right weak order generated by a separable element, thereby classifying those generalized quotients which induce splittings of the symmetric group, resolving a problem of Bj\"{o}rner and Wachs from 1988 \cite{Bjorner-Wachs}.
We also prove that this map is always surjective when $U$ is an order ideal in right weak order.  Interpreting these sets of permutations as linear extensions of 2-dimensional posets gives the first direct combinatorial proof of an inequality due originally to Sidorenko in 1991, answering an open problem Morales, Pak, and Panova \cite{PPM}.  We also prove a new $q$-analog of Sidorenko's formula.  All of these results are conjectured to extend to arbitrary finite Weyl groups.

Finally, we show that separable elements in $W$ are in bijection with the faces of all dimensions of several copies of the graph associahedron of the Dynkin diagram of $W$.  This correspondence associates to each separable element $w$ a certain \emph{nested set}; we give product formulas for the rank generating functions of the principal upper and lower order ideals generated by $w$ in terms of these nested sets, generalizing several known formulas.

\smallskip
\noindent \textbf{Keywords.} Weyl group, weak order, generalized quotient, separable permutation, linear extension, graph associahedron.
\end{abstract}

\maketitle

\section{Introduction} \label{sec:intro}

A permutation $w=w_1\ldots w_n$ is \emph{separable} if it avoids the patterns 3142 and 2413, meaning that there are no indices $i_1<i_2<i_3<i_4$ such that the values $w_{i_1}w_{i_2}w_{i_3}w_{i_4}$ are in the same relative order as 3142 or 2413.  This well-studied class of permutations arose in the study of pop-stack sorting \cite{Avis} and has found applications in algorithmic pattern matching and bootstrap percolation \cite{Bose, Shapiro}.  These permutations have a remarkable recursive combinatorial structure and are enumerated by the Schr\"{o}der numbers \cite{West}. 

This paper\footnote{An extended abstract of this work has been submitted to the proceedings of FPSAC 2020.} is a sequel to \cite{First-separable-paper}.  Whereas that paper was concerned with defining separable elements in arbitrary finite Weyl groups and establishing some of their structural properties (such as their characterization by root system pattern avoidance) this paper is concerned with certain algebraic decompositions of the Weyl group induced by separable elements and with applying results about these decompositions to resolve several open problems.  In addition, we show that the combinatorics of separable elements is closely linked with the combinatorics of graph associahedra.  Most of our results are new even for the case of the symmetric group.

Bj\"{o}rner and Wachs \cite{Bjorner-Wachs} introduced the notion of a \emph{generalized quotient} $W/U$ in a Coxeter group $W$, where $U \subseteq W$ is an arbitrary subset:
\[
W/U:=\{w \in W \: | \: \ell(wu)=\ell(w)+\ell(u), \forall u \in U\}.
\]
They proved that $W/U$ is always an interval $[e,w_0u_0^{-1}]_L$ in the \emph{left weak order}, where $u_0$ is the least upper bound of $U$ in the \emph{right weak order}.  When $U=W_J$ is a parabolic subgroup, the generalized quotient $W/U$ is precisely the \emph{parabolic quotient} $W^J$.  It is well known that the multiplication map $W^J \times W_J \to W$ is a length-additive bijection.  Any such pair $(X,Y)$ of subsets of $W$ for which the multiplication map $X \times Y \to W$ is a length-additive bijection is called a \emph{splitting}.

In Section \ref{sec:background} we recall background on Weyl groups, root systems, and the weak order which is not specific to the study of separable elements.  

Section \ref{sec:separable} defines the notion of a separable element in a finite Weyl group $W$ and states some results from \cite{First-separable-paper} which will be needed later.

Section \ref{sec:gen-quotients} states our three main results about generalized quotients.  First, in Theorem \ref{thm:separable-splitting} we show that there is a splitting $W/[e,u]_R \times [e,u]_R \to W$ when $u$ is separable, answering an open problem of Wei \cite{Wei2012Product}.  Next, in the case $W=S_n$, we show in Theorem \ref{thm:only-if} that any splitting $X \times Y \to W$ is of this form; this solves a problem of Bj\"{o}rner and Wachs \cite{Bjorner-Wachs} from 1988.  Lastly, in Theorem \ref{thm:surjectivity} we show that the multiplication map $W/[e,u]_R \times [e,u]_R \to W$ is surjective for \emph{any} $u \in W=S_n$.  Together with the discussion in Section \ref{sec:linear-extensions}, this resolves an open problem of Morales, Pak, and Panova \cite{PPM}.  In Section \ref{sec:linear-extensions} we also give a new $q$-analog of an inequality for linear extensions of 2-dimensional posets due to Sidorenko \cite{Sidorenko}. In Section \ref{sec:conjectures} we conjecture that Theorems \ref{thm:only-if} and \ref{thm:surjectivity} extend to arbitrary finite Weyl groups.

In Section \ref{sec:associahedra} we give an elegant bijection between separable elements $u \in W$ and \emph{nested sets} $\mc{N}_u$ on $\Gamma$, the Dynkin diagram associated to $W$.  By a result of Postnikov \cite{beyond-paper}, these nested sets index the faces of the \emph{graph associahedron} of $\Gamma$.  We give a product formula for the rank generating functions of $[e,u]_L$ and $[e,u]_R$ in terms of the nested set $\mc{N}_u$; this formula generalizes several formulas in the literature.  


Finally, Sections \ref{sec:proof-of-only-if} and \ref{sec:proof-of-surjectivity} contain the proofs of Theorems \ref{thm:only-if} and \ref{thm:surjectivity} respectively.

\section{Background and definitions} \label{sec:background}

This section consists of background and definitions relating to root systems, Weyl groups, and the weak and strong Bruhat orders; all of this material is standard and may be found, for example, in \cite{Bjorner-Brenti}.

Throughout the paper, $\Phi$ will denote a finite, crystallographic root system with chosen set of simple roots $\Delta$ and corresponding set of positive roots $\Phi^+$.  We freely use the well-known Cartan-Killing classification of irreducible root systems into types $A_n, B_n, C_n, D_n, G_2, F_4, E_6, E_7$, and $E_8$, although all of our results hold (at least conjecturally) in arbitrary type, and many of our proofs are type-uniform.

The \emph{root poset} is the partial order $(\Phi^+, \leq)$ where $\beta \leq \beta'$ if $\beta'-\beta$ is a nonnegative sum of simple roots.

We write $s_{\alpha}$ for the simple reflection across the hyperplane orthogonal to the simple root $\alpha \in \Delta$, and $W(\Phi)$ for the Weyl group, which is generated by the simple reflections.  Given an element $w\in W(\Phi)$, its \emph{length} $\ell(w)$ is defined to be the smallest $\ell$ such that $w=s_{\alpha_1} \cdots s_{\alpha_{\ell}}$ for some sequence of simple reflections.  The \emph{inversion set} of $w$ is: 
\[
I_{\Phi}(w) = \{ \beta \in \Phi^+ \: | \: w\beta \in \Phi^-\}.
\]
It is well-known that $\ell(w)=|I_{\Phi}(w)|$ and that $W(\Phi)$ has a unique element $w_0$ of maximal length; $w_0$ is an involution and has $I_{\Phi}(w_0)=\Phi^+$.  Inversions $\beta$ which are simple roots are called \emph{descents}.  We also say a simple reflection $s_{\beta}$ is a \emph{right (resp. left) descent} of $w$ if $\ell(ws_{\beta})<\ell(w)$ (resp. $\ell(s_{\beta}w)<\ell(w)$; the simple reflection $s_{\beta}$ is a right descent if and only if the simple root $\beta$ is a descent.

\begin{prop} \label{prop:biconvex}
Elements $w \in W(\Phi)$ are uniquely determined by their inversion sets, and $S \subseteq \Phi^+$ is the inversion set of some element if and only if it is \emph{biconvex}:
\begin{itemize}
    \item For each pair $\alpha, \beta \in S$, if $\alpha + \beta \in \Phi^+$, then $\alpha + \beta \in S$, and
    \item If $\gamma \in S$ and $\gamma = \alpha + \beta$ with $\alpha, \beta \in \Phi^+$, then at least one of $\alpha, \beta$ must be in $S$.
\end{itemize}
\end{prop}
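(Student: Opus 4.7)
The statement has three components: (i) the inversion set determines the element, (ii) every $I_\Phi(w)$ is biconvex, and (iii) every biconvex subset of $\Phi^+$ arises as $I_\Phi(w)$ for some $w$. I would treat them separately.

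For (i), I would induct on $\ell(w)$. The base case $w=e$ is immediate since $I_\Phi(e)=\emptyset$. For the inductive step, I would use the fact that any $w\neq e$ has a right descent, i.e., a simple root $\alpha \in I_\Phi(w)$ (otherwise no reduced expression can end). Given two elements $w,w'$ with $I_\Phi(w)=I_\Phi(w')$, choosing such an $\alpha$, one computes that $I_\Phi(ws_\alpha)=s_\alpha\bigl(I_\Phi(w)\setminus\{\alpha\}\bigr)$, using that $s_\alpha$ permutes $\Phi^+ \setminus \{\alpha\}$. Hence $I_\Phi(ws_\alpha)=I_\Phi(w's_\alpha)$ with both elements shorter, and induction gives $ws_\alpha=w's_\alpha$, so $w=w'$.

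For (ii), both biconvexity conditions follow from the linearity of the action of $w$ on roots together with the fact that any positive sum of positive roots that is a root is positive. If $\alpha,\beta \in I_\Phi(w)$ and $\alpha+\beta\in\Phi^+$, then $w(\alpha+\beta)=w\alpha+w\beta$ is a sum of two elements of $\Phi^-$, hence in $\Phi^-$, so $\alpha+\beta\in I_\Phi(w)$. For the second condition, if $\gamma=\alpha+\beta\in I_\Phi(w)$ with $\alpha,\beta\in\Phi^+$, then $w\alpha+w\beta\in\Phi^-$, and this forces at least one of $w\alpha,w\beta$ to be negative.

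For (iii), the plan is induction on $|S|$, the empty set corresponding to $w=e$. The key preliminary step is to show that any nonempty biconvex $S$ contains a simple root: take a minimal element $\gamma$ of $S$ in the root poset; if $\gamma$ were not simple, one could write $\gamma=\alpha+\beta$ with $\alpha,\beta\in\Phi^+$ (a standard fact in crystallographic types), and the second biconvexity condition would force one of $\alpha,\beta$ into $S$, contradicting minimality. Having produced such a simple $\alpha \in S$, I would set $S' := s_\alpha(S \setminus \{\alpha\})$. Because $s_\alpha$ fixes $\Phi^+\setminus\{\alpha\}$ setwise, $S' \subseteq \Phi^+$, and $|S'|=|S|-1$. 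By the inductive hypothesis, once $S'$ is shown biconvex, we get $S'=I_\Phi(w')$ for some $w'$, and then $w:=w's_\alpha$ has $I_\Phi(w)=S$, as can be checked by reversing the identity used in (i).

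The main obstacle is the biconvexity of $S'$: we must verify both closure conditions survive conjugation by $s_\alpha$ and the removal of $\alpha$. Concretely, if $s_\alpha\mu,s_\alpha\nu\in S'$ and $s_\alpha\mu+s_\alpha\nu=s_\alpha(\mu+\nu)\in\Phi^+$, one needs to deduce $\mu+\nu\in S$ (and then $\neq \alpha$, so its image lies in $S'$); similarly for the second condition. The careful case analysis here — especially handling whether $\mu+\nu$ could equal $\alpha$ or whether an intermediate decomposition involves $\alpha$ — is where all the bookkeeping lives, and it is the technical heart of the proposition.
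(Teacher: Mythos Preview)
The paper does not prove this proposition: it is presented in Section~\ref{sec:background} as standard background and simply attributed to the reference \cite{Bjorner-Brenti}, so there is no ``paper's own proof'' to compare against.

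Your proposed argument is correct and is essentially the standard one. Part (i) is a routine induction using the identity $I_\Phi(ws_\alpha)=s_\alpha\bigl(I_\Phi(w)\setminus\{\alpha\}\bigr)$ when $\alpha$ is a descent; part (ii) uses only linearity of $w$ and the fact that a root which is a sum of two roots of the same sign has that sign. For part (iii), the case analysis you flag as the ``technical heart'' does go through cleanly: for the first closure condition on $S'$, the only worry is $\mu+\nu=\alpha$, which is impossible since $\alpha$ is simple; for the second, the delicate case $\mu'=\alpha$ reduces to showing $s_\alpha\nu'=\gamma+\alpha\in S$, which follows from the \emph{first} biconvexity condition on $S$ applied to $\gamma,\alpha\in S$. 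One small point worth making explicit when you reverse the identity from (i): you need $\alpha\notin I_\Phi(w')=S'$, which holds because $s_\alpha\alpha=-\alpha\notin\Phi^+$, and then the correct form is $I_\Phi(w's_\alpha)=\{\alpha\}\cup s_\alpha S'=S$.
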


The \emph{left weak order} (sometimes called the \emph{left weak Bruhat order}) on $W(\Phi)$ is determined by its cover relations: $w \lessdot_L s_{\alpha}w$ whenever $\ell(s_{\alpha}w)=\ell(w)+1$, where $\alpha \in \Delta$.  The \emph{right weak order} is defined analogously, except with right multiplication by $s_{\alpha}$.  All Weyl groups are assumed to be ordered by left weak order unless otherwise specified.  It is a nontrivial fact that the weak orders are lattices.  We denote the lattice operations of join and meet by $\lor$ and $\land$ respectively, with superscripts $L$ or $R$ to indicate either the left or right weak order.

\begin{prop} \label{prop:weak-given-by-inversions}
The left weak order on $W(\Phi)$ is given by containment of inversion sets, that is: $u \leq_L w$ if and only if $I_{\Phi}(u) \subseteq I_{\Phi}(w)$.
\end{prop}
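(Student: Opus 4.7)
My plan is to prove both directions by induction on length, using as a key input the following computation: if $\alpha \in \Delta$ and $\ell(s_\alpha v) = \ell(v)+1$, then $I_\Phi(s_\alpha v) = I_\Phi(v) \sqcup \{v^{-1}\alpha\}$. This is a direct check from the fact that $s_\alpha$ permutes $\Phi^+ \setminus \{\alpha\}$, together with the standard equivalence $\ell(s_\alpha v) > \ell(v) \iff v^{-1}\alpha \in \Phi^+$. Equivalently, a cover $w' \lessdot_L w = s_\alpha w'$ in left weak order adds exactly the inversion $-w^{-1}\alpha$. The forward implication then follows immediately by induction on $\ell(w)-\ell(u)$: each step of a saturated chain from $u$ up to $w$ only grows the inversion set.

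For the converse, I induct on $\ell(w)$. If $I_\Phi(u) = I_\Phi(w)$ then $u = w$ by the uniqueness half of Proposition~\ref{prop:biconvex}, so assume the containment is strict. It then suffices to find a simple left descent $\alpha \in \Delta \cap I_\Phi(w^{-1})$ of $w$ with $-w^{-1}\alpha \notin I_\Phi(u)$, for then $I_\Phi(u) \subseteq I_\Phi(s_\alpha w) = I_\Phi(w) \setminus \{-w^{-1}\alpha\}$ and the inductive hypothesis applied to $s_\alpha w \lessdot_L w$ yields $u \leq_L s_\alpha w \leq_L w$. Using the bijection $\gamma \mapsto -w\gamma$ from $I_\Phi(w)$ onto $I_\Phi(w^{-1})$, this amounts to exhibiting $\gamma \in I_\Phi(w) \setminus I_\Phi(u)$ whose image $-w\gamma$ is a simple root.

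The hard part is this existence claim. My plan is to pick $\gamma \in I_\Phi(w) \setminus I_\Phi(u)$ minimizing the height of $-w\gamma$ and derive a contradiction assuming $-w\gamma$ is not simple. Write $-w\gamma = \delta_1 + \delta_2$ with $\delta_1, \delta_2 \in \Phi^+$; biconvexity of $I_\Phi(w^{-1})$ forces (say) $\delta_1 \in I_\Phi(w^{-1})$, so $\gamma_1 := -w^{-1}\delta_1 \in I_\Phi(w)$ has $\mathrm{ht}(-w\gamma_1) = \mathrm{ht}(\delta_1) < \mathrm{ht}(-w\gamma)$, and minimality gives $\gamma_1 \in I_\Phi(u)$. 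If also $\delta_2 \in I_\Phi(w^{-1})$, the analogous $\gamma_2 := -w^{-1}\delta_2$ lies in $I_\Phi(u)$; but $\gamma = \gamma_1 + \gamma_2$, so biconvexity of $I_\Phi(u)$ puts $\gamma$ itself in $I_\Phi(u)$, a contradiction. If instead $\delta_2 \notin I_\Phi(w^{-1})$, then $\eta := w^{-1}\delta_2 \in \Phi^+$ and $\gamma_1 = \gamma + \eta$; biconvexity of $I_\Phi(u)$ applied to $\gamma_1 \in I_\Phi(u)$ then forces either $\gamma \in I_\Phi(u)$ (ruled out by choice) or $\eta \in I_\Phi(u) \subseteq I_\Phi(w)$, contradicting $w\eta = \delta_2 \in \Phi^+$.
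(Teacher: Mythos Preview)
Your argument is correct. The paper does not actually prove Proposition~\ref{prop:weak-given-by-inversions}: it is stated in the background section as a standard fact, with a blanket reference to Bj\"orner--Brenti for all of that material. So there is no ``paper's own proof'' to compare against.

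Your approach is one of the standard ones. The forward direction is immediate from the cover-relation formula $I_\Phi(s_\alpha v)=I_\Phi(v)\sqcup\{v^{-1}\alpha\}$, and your converse argument---choosing $\gamma\in I_\Phi(w)\setminus I_\Phi(u)$ with $-w\gamma$ of minimal height and using biconvexity of both $I_\Phi(w^{-1})$ and $I_\Phi(u)$ to force $-w\gamma$ simple---is clean and complete. The only implicit fact you use beyond what the paper states is that every non-simple positive root decomposes as a sum of two positive roots; this is standard for crystallographic root systems (which the paper assumes throughout), so there is no gap.
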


The map $w \mapsto w^{-1}$ defines a poset isomorphism between the left and right weak orders.  Each has a unique minimal element $e$, the group identity element, and $w_0$ as its unique maximal element, called the \emph{longest element}.  Both left and right multiplication by $w_0$ determine poset anti-automorphisms of both left and right weak order.  We note that $I_{\Phi}(w_0w)=\Phi^+ \setminus I_{\Phi}(w)$.

If $W=W(\Phi)$ is a Weyl group with simple roots $\Delta$, and $J \subseteq \Delta$, we let $W_J$ denote the \emph{parabolic subgroup} of $W$ generated by $\{s_{\alpha}\}_{\alpha \in J}$.  The \emph{parabolic quotient} $W^J$ is the set of elements of $W$ with no descents in $J$.  We let $\Phi_J$ be the root system of those roots in $\Phi$ which are linear combinations of elements of $J$.

\begin{prop} \label{prop:parabolic-quotient}
Let $W=W(\Phi)$ and let $J \subseteq \Delta$, then:
\begin{itemize}
    \item $W^J$ forms a system of coset representatives for $W_J$ in $W$; in particular, each $w \in W$ has a unique expression $w=w^J w_J$ with $w^J \in W^J$ and $w_J \in W_J$.  For each $J$, by taking $w=w_0$, this expression determines important elements $w_0^J$ and $w_{0J}$.  We write $w_0(J)$ for $w_{0J}$ to avoid excessive subscripts.
    \item $W^J=[e,w_0^J]_L$ and $W_J=[e,w_0(J)]_L=[e,w_0(J)]_R$.
    \item The elements of $W^J$ are the unique elements of minimal length in their $W_J$-cosets, and the above expression for $w$ is \emph{length-additive}: $\ell(w)=\ell(w^J)+\ell(w_J)$.
\end{itemize}
\end{prop}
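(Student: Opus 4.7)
The plan is to first construct the decomposition $w = w^J w_J$ via induction on $\ell(w)$, then verify uniqueness and length-additivity by exploiting the interplay between inversion sets and the parabolic root subsystem $\Phi_J$, and finally to deduce the interval descriptions from Proposition \ref{prop:weak-given-by-inversions}.

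For existence, I would induct on $\ell(w)$. If $w$ has no right descent whose simple root lies in $J$, then $w \in W^J$ and the decomposition is $w = w \cdot e$. Otherwise, pick some $\alpha \in J$ for which $s_\alpha$ is a right descent of $w$ and apply the inductive hypothesis to $ws_\alpha$, writing $ws_\alpha = u v'$ with $u \in W^J$, $v' \in W_J$, and $\ell(ws_\alpha) = \ell(u) + \ell(v')$. Then $w = u(v' s_\alpha)$ with $v' s_\alpha \in W_J$, and the chain $\ell(w) \leq \ell(u) + \ell(v' s_\alpha) \leq \ell(u) + \ell(v') + 1 = \ell(ws_\alpha) + 1 = \ell(w)$ forces all inequalities to be equalities, yielding both the decomposition and its length-additivity.

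For uniqueness, the key technical ingredient is that $W_J$ stabilizes $\Phi^+ \setminus \Phi_J^+$: for each generator $s_\alpha$ with $\alpha \in J$, the reflection formula changes only the coefficient of $\alpha$, so the (necessarily positive) coefficients on simple roots outside $J$ are unchanged, and the result remains a positive root not in $\Phi_J$. Combined with the standard identity $I_\Phi(uv) = I_\Phi(v) \sqcup v^{-1} I_\Phi(u)$, which is valid precisely when the product is length-additive, this forces $I_\Phi(v) = I_\Phi(w) \cap \Phi_J^+$ in any such decomposition, uniquely pinning down $v$ and therefore $u = wv^{-1}$.

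For the interval descriptions, a biconvexity argument via Proposition \ref{prop:biconvex}, using induction on the height of roots in $\Phi_J^+$, promotes the local defining condition of $W^J$ (``no inversions among the simple roots in $J$'') into the global condition $I_\Phi(u) \cap \Phi_J^+ = \emptyset$; symmetrically, $W_J = \{v : I_\Phi(v) \subseteq \Phi_J^+\}$. Applying the decomposition to $w_0$ identifies $I_\Phi(w_0^J) = \Phi^+ \setminus \Phi_J^+$ and $I_\Phi(w_0(J)) = \Phi_J^+$, so Proposition \ref{prop:weak-given-by-inversions} gives $W^J = [e, w_0^J]_L$ and $W_J = [e, w_0(J)]_L$. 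The equality $[e, w_0(J)]_L = [e, w_0(J)]_R$ then follows from the poset isomorphism $w \mapsto w^{-1}$ between the two weak orders, since $W_J$ is closed under inversion and $w_0(J)$ is an involution. I expect the main obstacle to be the height-induction step translating descent data into root-system data; once that is in hand, the rest reduces to inversion-set bookkeeping.
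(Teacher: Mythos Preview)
The paper does not prove this proposition: it appears in Section~\ref{sec:background} as standard background, with a blanket citation to \cite{Bjorner-Brenti}, so there is no ``paper's own proof'' to compare against. Your argument is one of the standard ones and is correct.

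One small logical reordering is needed. Your uniqueness step (``this forces $I_\Phi(v) = I_\Phi(w) \cap \Phi_J^+$'') requires knowing that $I_\Phi(u) \cap \Phi_J^+ = \emptyset$ for $u \in W^J$: in the decomposition $I_\Phi(w) = I_\Phi(v) \sqcup v^{-1}I_\Phi(u)$, the second piece meets $\Phi_J^+$ exactly in $v^{-1}\bigl(I_\Phi(u) \cap \Phi_J^+\bigr)$, so you need that intersection to vanish. You only establish this in your third paragraph via the height induction. Since that induction is self-contained and does not use uniqueness, just move it ahead of the uniqueness argument and the dependency is resolved. Everything else (the inductive existence with length-additivity, the stabilization of $\Phi^+\setminus\Phi_J^+$ by $W_J$, the identification of $I_\Phi(w_0^J)$ and $I_\Phi(w_0(J))$, and the passage to right weak order via $w_0(J)^{-1}=w_0(J)$) is correct as sketched.
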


Let $\Phi$ be a root system with positive roots $\Phi^+$.  A subset $\Phi' \subset \Phi$ is a \emph{subsystem} of $\Phi$ if $\Phi'=\Phi \cap U$ for some linear subspace $U$ of $\spn(\Phi)$.  It is clear that any such $\Phi'$ is itself a root system.  The following generalization of pattern avoidance to finite Weyl groups was introduced by Billey and Postnikov \cite{Billey2005Smoothness}.  For $w \in W(\Phi)$, we say $w$ \emph{contains the pattern} $(w',\Phi')$ if $I_{\Phi}(w)\cap U = I_{\Phi'}(w')$; we write $w|_{\Phi'}=w'$ in this case.  If $\Phi'=\Phi_J$ is the set of roots in the span of $J \subseteq \Delta$ then $w|_{\Phi'}=w_J$, if we identify $W(\Phi')$ with $W_J$ in the natural way; note, however, that many subsystems are not of this form.  We say $w$ \emph{avoids} $(w',\Phi')$ if it does not contain any pattern isomorphic to $(w',\Phi')$.

A ranked poset $P=P_0 \sqcup P_1 \sqcup \cdots \sqcup P_r$ (such as the left or right weak order on a Weyl group, which are ranked by length) is \emph{rank-symmetric} if $|P_i|=|P_{r-i}|$ for all $i$, and \emph{rank-unimodal} if $|P_0|\leq \cdots \leq |P_j| \geq \cdots \geq |P_r|$ for some $j$.  Its \emph{rank generating function} $P(q)$ is $\sum_{i=0}^r |P_i| q^i$.  It is well known that $W_J$ and $W^J$ are rank-symmetric and rank-unimodal for all $J \subseteq \Delta$, and Proposition \ref{prop:parabolic-quotient} implies that 
\begin{equation} \label{eq:parabolic-product}
W^J(q)W_J(q)=W(q).
\end{equation}
We let $\Lambda^L_w=[e,w]_L$ and $V^L_w=[w,w_0]_L$ denote the principal lower and upper order ideals in left weak order respectively, and similarly for right weak order; we sometimes suppress the decorations $L$ or $R$ if a claim works just as well in either left or right weak order.  We make the convention that the rank function on $V_w$ is the natural one viewing $V_w$ as a poset in its own right: an element $u$ of $V_w$ has rank $\ell(u)-\ell(w)$.

\begin{prop} \label{prop:product-of-degrees}
For any finite Weyl group $W$ of rank $n$ we have 
\[
W(q)=\prod_{i=1}^n [d_i]_q,
\]
where the $d_i$ are integer invariants called the \emph{degrees} of $W$, and $[d]_q$ denotes the $q$-integer $1+q+\cdots+q^{d-1}$. 
\end{prop}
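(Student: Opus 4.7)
The plan is to realize $W(q)$ as the Hilbert series of the coinvariant algebra of $W$ and then compute that Hilbert series via Chevalley's theorem.

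First, let $V=\spn(\Phi)$ and $S=\operatorname{Sym}(V^*)$, graded so that $V^*$ lies in degree one. The Weyl group $W$ acts on $S$ via the contragredient action, preserving the grading. Chevalley's theorem asserts that the invariant subalgebra $S^W$ is itself a polynomial ring on $n$ algebraically independent homogeneous generators $f_1,\dots,f_n$ whose degrees $d_1,\dots,d_n$ are by definition the degrees of $W$, and moreover that $S$ is a free $S^W$-module. Writing $R_W:=S/(f_1,\dots,f_n)$ for the coinvariant algebra, freeness gives
\[
H(S;q)=H(S^W;q)\cdot H(R_W;q),
\]
and plugging in $H(S;q)=(1-q)^{-n}$ and $H(S^W;q)=\prod_i (1-q^{d_i})^{-1}$ produces $H(R_W;q)=\prod_{i=1}^n [d_i]_q$.

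Next I would identify $H(R_W;q)$ with $W(q)$. This amounts to exhibiting a homogeneous basis of $R_W$ indexed by $W$ in which the basis element for $w$ has degree $\ell(w)$; summing then gives $H(R_W;q)=\sum_{w\in W}q^{\ell(w)}=W(q)$. A canonical such basis comes from the Bernstein--Gelfand--Gelfand / Demazure divided-difference operators $\partial_\alpha$: starting from a class $X_{w_0}$ of degree $\ell(w_0)$ representing the top Schubert cell and applying $\partial_{\alpha_{i_1}}\cdots\partial_{\alpha_{i_k}}$ along any reduced expression for $w_0 w^{-1}$ yields well-defined classes $X_w\in R_W$ forming a homogeneous basis with $\deg X_w=\ell(w)$.

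The main obstacle is this second step: the Hilbert-series computation is purely formal once the Chevalley--Shephard--Todd theorem is granted, but passing from the algebra $R_W$ to the length generating function requires the nontrivial input of the BGG/Demazure basis (or an equivalent construction such as Schubert polynomials in type $A$, or a descent basis). An alternative route would be to establish the product formula by induction on rank using the parabolic identity \eqref{eq:parabolic-product} together with Solomon's recursion relating $W(q)$ to the $W_J(q)$, but that approach is less uniform across types and ultimately still needs to know the degrees $d_i$ intrinsically.
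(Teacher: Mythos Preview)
Your sketch is a correct outline of one of the standard proofs, but there is nothing to compare it against: the paper does not prove this proposition. It is stated in Section~\ref{sec:background} as background material, with a blanket citation to \cite{Bjorner-Brenti}, and is used later only as a black box (e.g.\ in the proof of Theorem~\ref{thm:only-if} to see that $W(q)$ is a product of cyclotomic polynomials, and in the remark after Theorem~\ref{thm:bijection-to-faces}).

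That said, your write-up could be tightened. The first step (Chevalley--Shephard--Todd plus freeness giving $H(R_W;q)=\prod_i[d_i]_q$) is fine. For the second step, invoking the full BGG/Demazure machinery is heavier than necessary: the more common textbook route is to show directly that $\dim R_W=|W|$ (a consequence of freeness and $\prod_i d_i=|W|$) and then argue via the Pittie--Steinberg basis or, more elementarily, via Solomon's formula $\sum_{w\in W}q^{\ell(w)}=\prod_i[d_i]_q$ proved by an alternating-sum identity over parabolic subgroups. Either way the result is classical, and the paper is right to simply cite it.
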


In addition to $q$-integers $[d]_q$, we will need $q$-factorials $[d]_q ! := [1]_q [2]_q \cdots [d]_q$, and $q$-multinomial coefficients
\[
\qbinom{\sum_{i=1}^k a_i}{a_1,\ldots,a_k}_q := \frac{[\sum_i a_i]_q!}{[a_1]_q! \cdots [a_k]_q!},
\]
with the usual convention that for binomial coefficients only $a_1$ is written on the bottom.  For a polynomial $f(q)$, we write $[q^d]f$ to denote the coefficient of $q^d$ in $f$.  

We will also use a third partial order, the \emph{strong Bruhat order} (also called just the \emph{strong order} or \emph{Bruhat order}), on the elements of a finite Weyl group $W$.  The cover relations in Bruhat order are $w \lessdot_B wt$ whenever $\ell(wt)=\ell(w)+1$, where $t$ is any reflection in $W$ (not necessarily a simple reflection).  Note that the Bruhat order has the same rank structure as the left and right weak orders, but strictly more cover relations.

\section{Separable elements of Weyl groups} \label{sec:separable}

We now introduce a definition of a separable element in any finite Weyl group.  This definition coincides exactly with separable permutations in the case of the symmetric group, although this is only made clear by Theorem \ref{thm:pattern-classification} below, where separable elements are characterized by root system pattern avoidance.  Theorem \ref{thm:bijection-to-faces} in Section \ref{sec:associahedra} gives another characterization of separable elements.

\begin{defin} \label{def:separable}
Let $w\in W(\Phi)$. Then $w$ is \textit{separable} if one of the following holds:
\begin{enumerate}
\item[(S1)] $\Phi$ is of type $A_1$;
\item[(S2)] $\Phi=\bigoplus\Phi_i$ is reducible and $w|_{\Phi_i}$ is separable for each $i$;
\item[(S3)] $\Phi$ is irreducible and there exists a \textit{pivot} $\alpha_i\in\Delta$ such that $w|_{\Phi_J}\in W(\Phi_J)$ is separable where $\Phi_J$ is generated by $J=\Delta\setminus\{\alpha_i\}$ and such that either 
\begin{align*}
    & \{\beta\in\Phi^+:\beta\geq\alpha_i\}\subset I_{\Phi}(w), \text{ or} \\
    & \{\beta\in\Phi^+:\beta\geq\alpha_i\}\cap I_{\Phi}(w)=\emptyset.
\end{align*}
We say $\alpha_i$ is a \emph{full} pivot in the first case, and an \emph{empty} pivot in the second.
\end{enumerate}

This notion is well-defined, since, in (S2) and (S3), we reduce to a subsystem of strictly smaller rank.
\end{defin}

\begin{example} \label{ex:sep-in-B4}
Let $\Phi=\{\pm e_i \pm e_j \: | \: 1 \leq i < j \leq 4\} \sqcup \{\pm e_i \: | \: 1 \leq i \leq 4\}$ be the root system of type $B_4$, where the $e_i$ are the standard basis elements in $\mathbb{R}^4$; let $\alpha_1=e_1-e_2$, $\alpha_2=e_2-e_3$, $\alpha_3=e_3-e_4$, and $\alpha_4=e_4$ denote the simple roots.  Let $w \in W(\Phi)$ be the element whose inversion set $I_{\Phi}(w) \subseteq \Phi^+$ is indicated in Figure \ref{fig:B4}.  Then we can conclude $w$ is separable as follows:
\begin{itemize}
    \item First, by (S3), we see that $\alpha_3$ is a full pivot since all $\beta \geq \alpha_3$ are in the inversion set.
    \item Now we reduce to checking that $w|_{\Phi_J}$ is separable, with $J=\Delta \setminus \{\alpha_3\}=\{\alpha_1, \alpha_2, \alpha_4\}$.  Notice $\Phi_J = \Psi_1 \oplus \Psi_2$ is reducible, with $\Psi_1^+=\{\alpha_1, \alpha_2, \alpha_1+\alpha_2\}$ and $\Psi_2^+=\{\alpha_4\}$, so by (S2), we need to show that $w|_{\Psi_1}$ and $w|_{\Psi_2}$ are separable.  Since $\Psi_2$ is of type $A_1$, $w|_{\Psi_2}$ is separable by (S1) of the definition.  
    \item Finally, $w|_{\Psi_1}$ has a pivot $\alpha_1$, this is an empty pivot, since neither $\alpha_1$ nor $\alpha_1+\alpha_2$ is an inversion.  We then reduce to the type $A_1$ subsystem spanned by $\alpha_2$, and we are done by (S1).
\end{itemize}
\end{example}

\begin{figure}[ht]
    \begin{center}
    \begin{tikzpicture}[scale=0.8]
    \node[draw,shape=circle,fill=white,scale=0.5](a)[label=below: {$\alpha_1$}] at (-4,0) {};
    \node[draw,shape=circle,fill=black,scale=0.5](b)[label=below: {$\alpha_2$}] at (-2,0) {};
    \node[draw,shape=circle,fill=black,scale=0.5](c)[label=below: {$\alpha_3$}] at (0,0) {};
    \node[draw,shape=circle,fill=white,scale=0.5](d)[label=below: {$\alpha_4$}] at (2,0) {};
    
    \node[draw,shape=circle,fill=white,scale=0.5](e)[label=left: {$\alpha_1+\alpha_2$}] at (-3,1) {};
    \node[draw,shape=circle,fill=black,scale=0.5](f) at (-1,1) {};
    \node[draw,shape=circle,fill=black,scale=0.5](g) at (1,1) {};
    
    \node[draw,shape=circle,fill=black,scale=0.5](h) at (-2,2) {};
    \node[draw,shape=circle,fill=black,scale=0.5](i) at (0,2) {};
    \node[draw,shape=circle,fill=black,scale=0.5](j) at (2,2) {};
    
    \node[draw,shape=circle,fill=black,scale=0.5](k) at (-1,3) {};
    \node[draw,shape=circle,fill=black,scale=0.5](l) at (1,3) {};
    
    \node[draw,shape=circle,fill=black,scale=0.5](m) at (0,4) {};
    \node[draw,shape=circle,fill=black,scale=0.5](n) at (2,4) {};
    
    \node[draw,shape=circle,fill=black,scale=0.5](o) at (1,5) {};
    
    \node[draw,shape=circle,fill=black,scale=0.5](p) at (2,6) {};
    
    \draw (a)--(e)--(b)--(f)--(c)--(g)--(d);
    \draw (e)--(h)--(f)--(i)--(g)--(j);
    \draw (h)--(k)--(i)--(l)--(j);
    \draw (k)--(m)--(l)--(n);
    \draw (m)--(o)--(n);
    \draw (o)--(p);
    
    
    \end{tikzpicture}
    \caption{The root poset for type $B_4$ is shown, with the filled nodes indicating the positive roots in the inversion set of the element $w$ from Example \ref{ex:sep-in-B4}.}
    \label{fig:B4}
    \end{center}
\end{figure}
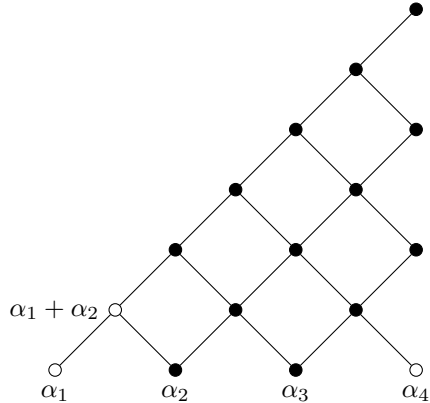

Theorem \ref{thm:symmetric-and-unimodal} from \cite{First-separable-paper} generalizes Fan Wei's result for the symmetric group to general finite Weyl groups.

\begin{thm}[\cite{First-separable-paper}] \label{thm:symmetric-and-unimodal}
Let $w \in W$ be separable, then $\Lambda_w$ and $V_w$ are rank-symmetric and rank-unimodal, and
\begin{equation} \label{eq:factorization}
\Lambda_w(q)V_w(q)=W(q).
\end{equation}
\end{thm}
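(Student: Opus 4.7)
The plan is to induct on the rank of $\Phi$, following the recursive structure in Definition \ref{def:separable}. The base case (S1) is immediate: $W(\Phi) = \{e,s\}$, giving $\Lambda_e(q) V_e(q) = 1 \cdot (1+q) = W(q)$ and similarly for $s$. For case (S2), with $\Phi = \bigoplus \Phi_i$ reducible, the group factors as $W(\Phi) = \prod W(\Phi_i)$ and the weak-order ideals split componentwise, so $\Lambda_w(q) = \prod \Lambda_{w|_{\Phi_i}}(q)$ and $V_w(q) = \prod V_{w|_{\Phi_i}}(q)$. The factorization and the rank-symmetric, rank-unimodal properties propagate through the product, using that products of symmetric unimodal polynomials remain symmetric and unimodal.

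The substantive case is (S3). Fix the pivot $\alpha_i \in \Delta$, set $J = \Delta \setminus \{\alpha_i\}$, and let $T = \{\beta \in \Phi^+ : \beta \geq \alpha_i\}$; the key geometric observation is that $\Phi^+ = T \sqcup \Phi_J^+$. I would then prove the following parabolic-decomposition lemma: for every $v \in W$ with $v = v^J v_J$ (where $v^J \in W^J$, $v_J \in W_J$),
\begin{equation*}
I_\Phi(v) \cap \Phi_J^+ = I_{\Phi_J}(v_J), \qquad I_\Phi(v) \cap T = v_J^{-1} I_\Phi(v^J) \subseteq T.
\end{equation*}
This rests on two standard facts: elements of $W^J$ carry $\Phi_J^+$ into $\Phi^+$ (so $I_\Phi(v^J) \subseteq T$), and $W_J$ permutes $T$ setwise because $v_J\beta - \beta$ lies in the span of $J$ and therefore preserves the $\alpha_i$-coefficient of $\beta$. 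A consequence, using Proposition \ref{prop:biconvex} and a length count, is that $w_0^J$ is the unique element of $W$ with inversion set exactly $T$.

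In the empty pivot subcase, $I_\Phi(w) \subseteq \Phi_J^+$, so $w \in W_J$ and the ideal $[e,w]_L$ computed in $W$ coincides with the one computed in $W_J$. The lemma shows $v \geq_L w$ in $W$ iff $v_J \geq_L w$ in $W_J$, with $v^J \in W^J$ unrestricted, so $V_w = W^J \cdot V_w^{W_J}$ length-additively. Combining the inductive hypothesis on $w|_{\Phi_J}$ with \eqref{eq:parabolic-product} gives
\begin{equation*}
\Lambda_w(q)\, V_w(q) = \Lambda_w^{W_J}(q)\, V_w^{W_J}(q) \cdot W^J(q) = W_J(q)\, W^J(q) = W(q).
\end{equation*}
In the full pivot subcase, $T \subseteq I_\Phi(w)$, which forces the parabolic decomposition $w = w_0^J \cdot w|_{\Phi_J}$. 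Now any $v \geq_L w$ must have $I_\Phi(v) \supseteq T$, hence $v^J = w_0^J$ by uniqueness, while $u \leq_L w$ places no constraint on $u^J$. Therefore $V_w = w_0^J \cdot V_{w|_{\Phi_J}}^{W_J}$ and $\Lambda_w = W^J \cdot \Lambda_{w|_{\Phi_J}}^{W_J}$, yielding the same product formula.

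The main obstacle is the parabolic-decomposition lemma — in particular, checking that $\Phi^+ = T \sqcup \Phi_J^+$ (so every positive root outside $\Phi_J$ sits above $\alpha_i$ in the root poset) and that $W_J$ stabilizes $T$, as these two facts are what make the empty and full pivot subcases mirror images of one another. Once the lemma is secured, rank-symmetry and rank-unimodality of $\Lambda_w$ and $V_w$ are inherited from the parabolic factors exactly as in case (S2). A possible shortcut would be to reduce the full pivot case to the empty one via the weak-order anti-automorphism $u \mapsto w_0 u$, but this demands a separate verification that separability is preserved under this map.
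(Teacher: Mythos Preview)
This theorem is not proven in the present paper; it is quoted from the authors' earlier work \cite{First-separable-paper}, so there is no proof here to compare against. That said, your argument is correct and is the natural one: induct along the recursive structure of Definition~\ref{def:separable}, using the parabolic decomposition $\Phi^+ = T \sqcup \Phi_J^+$ at the pivot. Your key lemma---that $I_\Phi(v)\cap\Phi_J^+ = I_{\Phi_J}(v_J)$ and $I_\Phi(v)\cap T = v_J^{-1}I_\Phi(v^J)$, together with the fact that $W_J$ stabilizes $T$ because acting by $W_J$ does not change the $\alpha_i$-coefficient---is exactly what is needed, and your deductions $V_w \cong W^J \times V_{w|_{\Phi_J}}^{W_J}$ (empty pivot) and $\Lambda_w \cong W^J \times \Lambda_{w|_{\Phi_J}}^{W_J}$ (full pivot) follow cleanly. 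The shortcut you mention at the end, reducing the full pivot case to the empty one via $u\mapsto w_0u$, is available in this paper as Corollary~\ref{cor:closure}, but your direct treatment of both cases avoids any circularity and is self-contained.
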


The similarity of (\ref{eq:factorization}) to (\ref{eq:parabolic-product}) suggests that one should look for a length-additive multiplicative decomposition of $W$ corresponding to each separable element $w$, analogous to that in Proposition \ref{prop:parabolic-quotient}.  Indeed, such a decomposition is constructed in Section \ref{sec:gen-quotients}; in addition we show for the symmetric group (and conjecture in other types) that separable elements induce \emph{all} such decompositions.

In Section \ref{sec:associahedra} we give explicit product formulas for $\Lambda_w(q)$ and $V_w(q)$ when $w$ is separable in terms of the nested set indexing the corresponding face of the graph associahedron, making Theorem \ref{thm:symmetric-and-unimodal} even more explicit.

Recall that separable permutations are defined to be those which avoid the patterns 3142 and 2413.  Theorem \ref{thm:pattern-classification} from \cite{First-separable-paper} implies that separable elements of general finite Weyl groups are characterized by pattern avoidance in the sense of Billey and Postnikov (see the discussion after Proposition \ref{prop:parabolic-quotient} in Section \ref{sec:background}).  This has the benefit of giving a non-recursive characterization of separable elements (in contrast to Definition \ref{def:separable}) as well as implying that separable elements in $W=S_n$ are precisely the separable permutations which had received much previous study.

\begin{thm}[\cite{First-separable-paper}] \label{thm:pattern-classification}
An element $w \in W(\Phi)$ is separable if and only if $w$ avoids the following root system patterns:
\begin{itemize}
    \item [(i)] the patterns corresponding to the permutations 3142 and 2413 in the Weyl group of type $A_3$,
    \item [(ii)] the two patterns of length two in the Weyl group of type $B_2$, and
    \item [(iii)] the six patterns of lengths two, three, and four in the Weyl group of type $G_2$.
\end{itemize}
\end{thm}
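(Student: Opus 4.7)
The plan is to prove both directions of the equivalence by induction on the rank of $\Phi$, combining the recursive structure of Definition \ref{def:separable} with the biconvexity of inversion sets (Proposition \ref{prop:biconvex}).

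For the forward direction (separable implies pattern avoidance), the base case is a finite computation: directly from Definition \ref{def:separable} one checks that each of the listed forbidden patterns fails to be separable in $W(A_3)$, $W(B_2)$, or $W(G_2)$. The inductive step reduces to the sublemma that separability is inherited by pattern restriction, i.e., if $w\in W(\Phi)$ is separable and $w|_{\Phi'}=w'$, then $w'\in W(\Phi')$ is separable. This is proved by cases on which clause of Definition \ref{def:separable} produces $w$. Under (S2), $\Phi'$ respects the decomposition of $\Phi$ and we apply the inductive hypothesis componentwise. Under (S3) with pivot $\alpha_i$, either $\Phi'\subseteq \Phi_J$ and we reduce to $w|_{\Phi_J}$, or $\Phi'\cap C_i\neq\emptyset$ for $C_i:=\{\beta\in\Phi^+:\beta\geq\alpha_i\}$, and biconvexity together with the uniform behavior of $C_i$ in $I_\Phi(w)$ forces some simple root of $\Phi'$ to act as a pivot for $w'$.

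For the reverse direction (pattern avoidance implies separability), again induct on rank. The reducible case is handled by (S2) after observing that restriction to components preserves pattern avoidance. In the irreducible case of rank at least $2$ we must exhibit a pivot. The key claim is that if no simple root $\alpha_i$ satisfies $C_i\subseteq I_\Phi(w)$ or $C_i\cap I_\Phi(w)=\emptyset$, then one can find $\alpha_i\in\Delta$ together with $\beta,\beta'\in C_i$ differing in inversion status; using biconvexity one then extracts a rank-$2$ or rank-$3$ subsystem $\Phi'$ containing these roots whose restriction $w|_{\Phi'}$ realizes one of the listed forbidden patterns, contradicting avoidance. Given a pivot $\alpha_i$, the restriction $w|_{\Phi_J}$ still avoids every listed pattern (any pattern of $w|_{\Phi_J}$ is a pattern of $w$), and induction completes the argument via (S3).

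The main obstacle is the pivot-existence step in the irreducible case of the reverse direction, which appears to require a type-by-type verification. In the simply-laced types $A_n, D_n, E_n$, a mismatched pair $\beta,\beta'\in C_i$ should produce an $A_3$-subsystem carrying a $3142$ or $2413$ pattern; in the doubly-laced types $B_n, C_n, F_4$, long/short interactions additionally invoke the two $B_2$ patterns; and type $G_2$ supplies its own base cases. The delicate point is verifying, for every configuration of mismatched roots in every irreducible type, that the subsystem extracted via biconvexity actually realizes one of the forbidden patterns in the Billey--Postnikov sense; this requires a careful enumeration of rank-$2$ and rank-$3$ subsystems sitting near the top of each irreducible root poset, and a type-uniform argument does not appear to be within easy reach.
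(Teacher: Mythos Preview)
This theorem is not proved in the present paper: it is quoted from \cite{First-separable-paper}, so there is no ``paper's own proof'' here to compare against. Your proposal is therefore being measured against what a proof would have to accomplish, not against a specific argument in this text.

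As written, the proposal is an outline with two substantive gaps, one of which you already acknowledge.

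\textbf{Forward direction.} Your sublemma ``patterns of separable elements are separable'' is the right target, but the case analysis under (S3) is not complete. When $\Phi'\cap C_i\neq\emptyset$, you assert that biconvexity forces some simple root of $\Phi'$ to be a pivot for $w'=w|_{\Phi'}$. This does not follow without further work: $\Phi'=\Phi\cap U$ is an arbitrary linear slice, and its induced simple roots need not interact with $C_i$ in the way you need. Concretely, a positive root of $\Phi'$ lying in $C_i$ need not be comparable in the root poset of $\Phi'$ to any simple root of $\Phi'$ in the manner required to produce a pivot there. Note also that in this paper the implication you are trying to prove directly is instead \emph{derived from} Theorem~\ref{thm:pattern-classification} (see the proof of Theorem~\ref{thm:separable-splitting}), which suggests it is not a one-line consequence of biconvexity.

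\textbf{Reverse direction.} You correctly identify the crux: if no $\alpha_i$ is a pivot, one must locate a small subsystem realizing a forbidden pattern. Your description of how to ``extract a rank-$2$ or rank-$3$ subsystem via biconvexity'' is not yet an argument; biconvexity alone controls sums of roots, not the existence of a subsystem of the precise isomorphism type carrying the precise inversion configuration needed. The honest acknowledgement that this step is a type-by-type check is appropriate, but as stated the proposal does not carry it out, so the reverse direction remains unproved.

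In summary: the overall architecture (induction on rank, with the pivot-existence dichotomy) is the natural one and matches what one would expect from \cite{First-separable-paper}, but neither direction is complete as written.
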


\begin{cor}[\cite{First-separable-paper}]
Under the usual identification of the Weyl group $W$ of type $A_{n-1}$ with the symmetric group of permutations of $\{1,\ldots,n\}$, an element $w \in W$ is separable if and only if it corresponds to a separable permutation.
\end{cor}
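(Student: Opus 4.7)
The plan is to derive this corollary directly from Theorem~\ref{thm:pattern-classification}. There are two observations needed.

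First, the root system of type $A_{n-1}$ is simply laced, so it contains no subsystem isomorphic to $B_2$ or to $G_2$, each of which has roots of two distinct lengths. Hence conditions (ii) and (iii) in Theorem~\ref{thm:pattern-classification} are vacuous for $w \in W$, and separability reduces to avoidance of the two root-system patterns in $A_3$ corresponding to the permutations $3142$ and $2413$.

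Second, I would verify that Billey--Postnikov pattern containment in type $A$ coincides with classical permutation pattern containment. Every type $A_{k-1}$ subsystem of the root system $\Phi = \{e_i - e_j : i \neq j\}$ of type $A_{n-1}$ arises from a $k$-element subset $S = \{i_1 < \cdots < i_k\}$ of $\{1,\ldots,n\}$, spanning the roots $e_{i_a} - e_{i_b}$. A direct comparison of inversion sets then shows that the restriction $w|_{\Phi_S}$ is exactly the standardization of the subsequence $w(i_1), \ldots, w(i_k)$, that is, the classical induced subpermutation at the positions $i_1, \ldots, i_k$.

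Combining these two reductions, $w \in W$ is separable in the sense of Definition~\ref{def:separable} if and only if it avoids the patterns $3142$ and $2413$ in the classical sense, i.e., $w$ is a separable permutation. The only nontrivial point, and in my view the main (though still minor) obstacle, is the verification that every type $A_{k-1}$ subsystem of a type $A$ root system is indexed by a subset of $\{1,\ldots,n\}$ in the manner described; this is a standard fact about root subsystems in type $A$, and everything else in the argument is routine bookkeeping.
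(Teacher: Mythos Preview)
Your proposal is correct and follows exactly the (implicit) approach of the paper: the corollary is stated without proof, immediately after Theorem~\ref{thm:pattern-classification}, with the surrounding text indicating that it follows because Billey--Postnikov pattern avoidance in type $A$ specializes to classical pattern avoidance. Your two observations---that simply-laced root systems admit no $B_2$ or $G_2$ subsystems, and that every irreducible type $A$ subsystem of $A_{n-1}$ comes from a subset of coordinates so that restriction recovers the classical induced pattern---are precisely what is needed to make this explicit.
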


Theorem \ref{thm:pattern-classification} also makes it clear that the set of separable elements is closed under the natural involutions on Weyl groups $x \mapsto w_0x$, $x \mapsto xw_0$, and $x \mapsto x^{-1}$; the latter two of these are not clear from Definition \ref{def:separable}.

\begin{cor}[\cite{First-separable-paper}] \label{cor:closure}
Let $w \in W$ be separable.  Then $w_0w, ww_0,$ and $w^{-1}$ are also separable.
\end{cor}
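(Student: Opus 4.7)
My plan is to derive the corollary from the pattern-avoidance characterization given by Theorem \ref{thm:pattern-classification}. Write $\mathcal{L}$ for the finite list of forbidden root-system patterns: $\{3142, 2413\} \subset W(A_3)$, the two length-two elements of $W(B_2)$, and the six elements of $W(G_2)$ of lengths two, three, and four. It suffices to show that avoidance of $\mathcal{L}$ is preserved by each of the three involutions.

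The first step is to record how each involution acts on inversion sets. Directly from $I_{\Phi}(w) = \{\beta \in \Phi^+ : w\beta \in \Phi^-\}$ one computes
\[
I_{\Phi}(w_0 w) = \Phi^+ \setminus I_{\Phi}(w), \quad I_{\Phi}(w w_0) = -w_0\bigl(\Phi^+ \setminus I_{\Phi}(w)\bigr), \quad I_{\Phi}(w^{-1}) = \{-w\beta : \beta \in I_{\Phi}(w)\}.
\]
For left $w_0$-multiplication, complementation commutes with intersecting with $\Phi'^+$, so if $w|_{\Phi'} = v$ then $(w_0 w)|_{\Phi'} = w_0(\Phi') \cdot v$, where $w_0(\Phi')$ is the longest element of $W(\Phi')$; hence $w_0 w$ contains $(v', \Phi')$ iff $w$ contains $(w_0(\Phi') v', \Phi')$. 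For inversion, $w$ carries $\Phi'$ bijectively onto the subsystem $w(\Phi') \subseteq \Phi$, and under this canonical root-system isomorphism $(w^{-1})|_{w(\Phi')}$ corresponds to $v^{-1}$; hence $w^{-1}$ contains $(v^{-1}, \Phi')$ (up to isomorphism of patterns) iff $w$ contains $(v, \Phi')$. For right $w_0$-multiplication, write $x \mapsto xw_0$ as the composition $x \mapsto x^{-1} \mapsto w_0 x^{-1} \mapsto (w_0 x^{-1})^{-1} = xw_0$, so closure here follows from closure under the other two involutions.

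The remaining step is to verify that $\mathcal{L}$ is itself closed under the corresponding operations in each ambient small Weyl group. In $W(A_3)$ a direct check shows $w_0 \cdot 3142 = 2413$ and $(3142)^{-1} = 2413$, so $\{3142,2413\}$ is preserved by both $x \mapsto w_0 x$ and $x \mapsto x^{-1}$. In $W(B_2)$ and $W(G_2)$, the bad patterns are exactly all elements of certain fixed lengths (length $2$ in $B_2$; lengths $2,3,4$ in $G_2$); inversion preserves length, while $x \mapsto w_0 x$ sends length $k$ to $\ell(w_0) - k$, which is $4-k$ in $B_2$ and $6-k$ in $G_2$, each preserving the relevant length set. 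Combining with the previous paragraph yields the corollary.

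The main obstacle is the inversion case in the second step: one must carefully track that the natural subsystem to consider for the pattern of $w^{-1}$ is $w(\Phi')$ rather than $\Phi'$ itself, and that these two are canonically isomorphic via $w$ in a way that matches $(w^{-1})|_{w(\Phi')}$ with $v^{-1}$. Once this bookkeeping is in place, everything else reduces to bounded finite computations in the three rank-two or rank-three Weyl groups appearing in $\mathcal{L}$.
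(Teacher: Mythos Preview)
Your proof is correct and follows essentially the same approach as the paper: both deduce the corollary from Theorem~\ref{thm:pattern-classification} by checking that the list $\mathcal{L}$ of forbidden patterns is closed under the three involutions and that pattern containment transforms as expected (e.g.\ $w$ contains $u$ iff $w^{-1}$ contains $u^{-1}$). The paper compresses this into two sentences, while you spell out the inversion-set computations and the case checks in $A_3$, $B_2$, $G_2$ explicitly; your added care in the inversion case---tracking that the relevant subsystem for $w^{-1}$ is $w(\Phi')$ and that patterns are taken up to root-system isomorphism---is exactly the bookkeeping the paper sweeps under ``easy to check.''
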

\begin{proof}
The set of forbidden patterns in Theorem \ref{thm:pattern-classification} is closed under these three involutions, and it is easy to check that $w$ avoids $u$ if and only if $w^{-1}$ avoids $u^{-1}$ (and similarly for the other two).
\end{proof}

\begin{prop} \label{prop:separable-in-direct-sum}
An element $w=(w_1,w_2) \in W(\Phi_1) \times W(\Phi_2) = W(\Phi_1 \oplus \Phi_2)$ is separable if and only if $w_1 \in W(\Phi_1)$ and $w_2 \in W(\Phi_2)$ are both separable.
\end{prop}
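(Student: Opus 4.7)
The plan is to prove the two directions separately; both are short and rely on results already stated in the excerpt.

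For the $(\Leftarrow)$ direction I would simply invoke clause (S2) of Definition~\ref{def:separable}. The root system $\Phi=\Phi_1\oplus\Phi_2$ is reducible (the degenerate case in which one factor is empty is trivial), and by construction $w|_{\Phi_i}=w_i$ is assumed separable for $i=1,2$. Therefore $w$ is separable by (S2) directly.

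For the $(\Rightarrow)$ direction, the cleanest route is via the pattern avoidance characterization, Theorem~\ref{thm:pattern-classification}. The key observation is that any root system pattern $(w',\Phi')$ contained in $w_i$ lifts to a pattern in $w$: regarding $\Phi'\subseteq\Phi_i$ as a subsystem of $\Phi_1\oplus\Phi_2$, every positive root of $\Phi_1\oplus\Phi_2$ lies in exactly one of $\Phi_1^+$ or $\Phi_2^+$, so $I_\Phi(w)=I_{\Phi_1}(w_1)\sqcup I_{\Phi_2}(w_2)$, and intersecting with $\spn(\Phi')$ recovers precisely $I_{\Phi'}(w')$. Consequently, if either $w_1$ or $w_2$ contained one of the forbidden $A_3$, $B_2$, or $G_2$ patterns listed in Theorem~\ref{thm:pattern-classification}, then $w$ would contain the same pattern, contradicting the separability of $w$. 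Thus both $w_1$ and $w_2$ avoid all forbidden patterns and are separable.

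The only (mild) obstacle is the bookkeeping check that pattern containment transfers correctly between a factor $\Phi_i$ and the direct sum $\Phi_1\oplus\Phi_2$; this is essentially immediate from the orthogonal decomposition of the positive roots noted above. No further case analysis or induction on rank is needed, since the non-recursive characterization afforded by Theorem~\ref{thm:pattern-classification} handles both factors uniformly.
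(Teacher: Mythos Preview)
Your proof is correct. The $(\Leftarrow)$ direction via (S2) is exactly what the paper does. For $(\Rightarrow)$ you take a genuinely different route: the paper argues directly from Definition~\ref{def:separable}, using only that $I_{\Phi_1\oplus\Phi_2}(w)=I_{\Phi_1}(w_1)\sqcup I_{\Phi_2}(w_2)$ and that the root poset splits as a disjoint union, so the recursive clauses unwind separately on each factor. You instead invoke Theorem~\ref{thm:pattern-classification} and observe that a forbidden pattern in $w_i$ lifts verbatim to $w$. Your argument is clean and sidesteps any bookkeeping about which decomposition (S2) refers to, at the cost of citing the pattern-avoidance characterization, which is a substantially harder result from the prequel paper; the paper's version is more self-contained but terser.
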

\begin{proof}
This is clear from Definition \ref{def:separable} after noting that $I_{\Phi_1 \oplus \Phi_2}(w)=I_{\Phi_1}(w_1) \sqcup I_{\Phi_2}(w_2)$ and that the root poset on $\Phi_1 \oplus \Phi_2$ is just the disjoint union of the root posets of $\Phi_1$ and $\Phi_2$.
\end{proof}

\section{Generalized quotients and splittings of Weyl groups} \label{sec:gen-quotients}
Given any subset $U$ of a Weyl group $W$, Bj\"{o}rner and Wachs \cite{Bjorner-Wachs} introduced the \emph{generalized quotient}:
\[
W/U=\{w \in W \: | \: \ell(wu)=\ell(w)+\ell(u), \forall u \in U\}.
\]

\begin{prop}[Bj\"{o}rner and Wachs \cite{Bjorner-Wachs}]\label{prop:W/U}
Let $u_0=\bigvee^R_{u \in U} u$, then $W/U=[e,w_0u_0^{-1}]_L$.
\end{prop}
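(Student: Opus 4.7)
The plan is to translate the length-additivity conditions defining $W/U$ into disjointness of inversion sets in $\Phi^+$, and then use the inversion-set description of left weak order (Proposition \ref{prop:weak-given-by-inversions}) together with biconvexity (Proposition \ref{prop:biconvex}) to recognize $W/U$ as a principal lower ideal. First I would establish the standard identity
\[
\ell(wu) = \ell(w) + \ell(u) \iff I_{\Phi}(w) \cap I_{\Phi}(u^{-1}) = \emptyset,
\]
obtained by decomposing $I_\Phi(wu) = \{\beta \in \Phi^+ : wu\beta \in \Phi^-\}$ according to the sign of $u\beta$: those $\beta$ with $u\beta \in \Phi^+$ biject under $\beta \mapsto u\beta$ with $I_\Phi(w) \setminus I_\Phi(u^{-1})$, while those with $u\beta \in \Phi^-$ biject under $\beta \mapsto -u\beta$ with $I_\Phi(u^{-1}) \setminus I_\Phi(w)$; the total count equals $\ell(w)+\ell(u)$ precisely when the two sets are disjoint. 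Consequently, $w \in W/U$ is equivalent to $I_\Phi(w) \cap \bigcup_{u \in U} I_\Phi(u^{-1}) = \emptyset$.

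Next I would assemble two observations. First, the complement $\Phi^+ \setminus I_\Phi(w)$ equals $I_\Phi(w_0 w)$ and is therefore itself biconvex. Second, since $w \mapsto w^{-1}$ is a poset isomorphism from right to left weak order, $u_0 = \bigvee^R_{u \in U} u$ satisfies $u_0^{-1} = \bigvee^L_{u \in U} u^{-1}$, so by Proposition \ref{prop:weak-given-by-inversions} the set $I_\Phi(u_0^{-1})$ is the smallest inversion set containing $\bigcup_{u \in U} I_\Phi(u^{-1})$. The disjointness condition above can be rewritten as $\bigcup_u I_\Phi(u^{-1}) \subseteq \Phi^+ \setminus I_\Phi(w)$; since the right side is itself an inversion set, this is equivalent to $I_\Phi(u_0^{-1}) \subseteq \Phi^+ \setminus I_\Phi(w)$, i.e., $I_\Phi(w) \subseteq \Phi^+ \setminus I_\Phi(u_0^{-1}) = I_\Phi(w_0 u_0^{-1})$. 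By Proposition \ref{prop:weak-given-by-inversions} this is exactly $w \leq_L w_0 u_0^{-1}$, giving $W/U = [e, w_0 u_0^{-1}]_L$.

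The main technical ingredient is the inversion-set criterion for length-additivity; I expect the only mildly delicate step to be the passage from disjointness with each individual $I_\Phi(u^{-1})$ to disjointness with $I_\Phi(u_0^{-1})$, which is handled cleanly once one observes that inversion-set complements are again inversion sets, so the containment lifts automatically from the union to the left-weak-order join $u_0^{-1}$.
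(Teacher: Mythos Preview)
Your argument is correct. The key identity $\ell(wu)=\ell(w)+\ell(u)\iff I_\Phi(w)\cap I_\Phi(u^{-1})=\emptyset$ is derived exactly as you describe, and the passage from the union $\bigcup_{u\in U} I_\Phi(u^{-1})$ to $I_\Phi(u_0^{-1})$ is justified cleanly by the observation that $\Phi^+\setminus I_\Phi(w)=I_\Phi(w_0w)$ is itself an inversion set, so containing the union is equivalent to containing the smallest inversion set above it, namely $I_\Phi\bigl(\bigvee^L_{u\in U} u^{-1}\bigr)=I_\Phi(u_0^{-1})$.

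Note, however, that the paper does not supply its own proof of this proposition: it is quoted as a result of Bj\"orner and Wachs \cite{Bjorner-Wachs} and used as a black box. So there is no in-paper argument to compare against. Your proof is essentially the standard one and would be appropriate to include if a self-contained treatment were desired; it relies only on facts the paper already records (Propositions~\ref{prop:biconvex} and~\ref{prop:weak-given-by-inversions}, the lattice property of weak order, the anti-automorphism $w\mapsto w_0w$, and the isomorphism $w\mapsto w^{-1}$ between left and right weak order).
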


A pair $(X,Y)$ of arbitrary subsets $X,Y \subseteq W$ such that the multiplication map $X \times Y \to W$ sending $(x,y)\mapsto xy$ is \emph{length-additive} (meaning $\ell(xy)=\ell(x)+\ell(y), \forall x \in X, y \in Y$) and bijective is called a \emph{splitting} of $W$.  Generalized quotients generalize the notion of parabolic quotients, since $W^J=W/W_J$; Proposition \ref{prop:parabolic-quotient} implies that we have a splitting $(W^J, W_J)$ in this case.

Bj\"{o}rner and Wachs (1988) asked for a classification of splittings of the symmetric group induced by generalized quotients:

\begin{prob}[Bj\"{o}rner and Wachs \cite{Bjorner-Wachs}] \label{prob:splittings}
In the case $W=S_n$, for which $U \subseteq W$ is the multiplication map
\[
W/U \times U \to W
\]
sending $(x,y) \mapsto xy$ a splitting?
\end{prob}

Since this map is length-additive by definition of generalized quotient, Problem \ref{prob:splittings} amounts to asking when it is a bijection.  Theorem \ref{thm:separable-splitting} identifies splittings corresponding to separable elements in any finite Weyl group.  Fan Wei  \cite{Wei2012Product} proved an equivalent statement in the case of the symmetric group using explicit manipulations on permutations; our proof is type-independent.  Theorem \ref{thm:separable-splitting} answers an open problem of Wei by extending the results of \cite{Wei2012Product} to all finite Weyl groups.

\begin{thm} \label{thm:separable-splitting}
Let $W$ be any finite Weyl group and $U=[e,u]_R$ with $u$ separable, then $(W/U, U)$ is a splitting of $W$. 
\end{thm}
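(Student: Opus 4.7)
The plan is to combine a rank-generating-function identity with an inductive argument based on the recursive definition of separable elements. Since the multiplication map $W/U \times U \to W$ is length-additive by the very definition of $W/U$, it will suffice to verify both the polynomial identity $(W/U)(q) \cdot U(q) = W(q)$ and the bijectivity of the map; given the identity, bijectivity is equivalent to injectivity (or surjectivity) on each length component.

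For the polynomial identity, Proposition~\ref{prop:W/U} gives $W/U = [e, w_0 u^{-1}]_L$. The map $y \mapsto y^{-1}$ is a poset isomorphism between left and right weak orders, so $U(q) = \Lambda_{u^{-1}}^L(q)$. The anti-automorphism $x \mapsto w_0 x$ of left weak order sends $V_{u^{-1}}^L = [u^{-1}, w_0]_L$ onto $W/U = [e, w_0 u^{-1}]_L$, reversing ranks; since $u^{-1}$ is separable by Corollary~\ref{cor:closure}, $V_{u^{-1}}$ is rank-symmetric by Theorem~\ref{thm:symmetric-and-unimodal}, which forces $(W/U)(q) = V_{u^{-1}}^L(q)$. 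Multiplying and applying the factorization $\Lambda_{u^{-1}}(q) V_{u^{-1}}(q) = W(q)$ from Theorem~\ref{thm:symmetric-and-unimodal} yields the desired identity.

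For bijectivity, I would induct on the rank of $\Phi$ using the recursive cases (S1)--(S3). Case (S1) is trivial, and the reducible case (S2) reduces componentwise along $\Phi = \Phi_1 \oplus \Phi_2$: both $U$ and $W/U$ factor as direct products compatibly with the multiplication (using $I_{\Phi_1 \oplus \Phi_2}(w) = I_{\Phi_1}(w_1) \sqcup I_{\Phi_2}(w_2)$ from Proposition~\ref{prop:separable-in-direct-sum}), so the map splits and each factor bijection follows by induction. In case (S3) with an empty pivot at $\alpha_i$, we have $u \in W_J$ for $J = \Delta \setminus \{\alpha_i\}$; using the parabolic factorization $v = v^J v_J$ and the standard fact $\ell(v^J z) = \ell(v^J) + \ell(z)$ for all $z \in W_J$, one verifies $W/U = W^J \cdot (W_J/U)$ length-additively, so the multiplication factors as the classical parabolic bijection $W^J \times W_J \to W$ composed with the inductive bijection $(W_J/U) \times U \to W_J$ inside the smaller Weyl group $W_J$.

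The main obstacle is case (S3) with a full pivot at $\alpha_i$, where $u = w_0^J \cdot u|_{\Phi_J}$ is not itself in $W_J$ and the recursion is not immediate. My plan is to use Corollary~\ref{cor:closure} to pass to $w_0 u$, which is separable and, by a direct computation using the factorization $w_0 = w_0^J w_0(J)$, equals $w_0(J) \cdot u|_{\Phi_J} \in W_J$; thus $w_0 u$ has an empty pivot at $\alpha_i$, and the previous case supplies a splitting for it. Transferring this splitting back to one for $u$ itself---via the left-weak-order anti-automorphism $x \mapsto w_0 x$, in conjunction with the rank-generating-function identity established in the first step---is the central technical point requiring care, since the sets $U$ and $[e, w_0 u]_R$ need not have the same size and the transfer must be mediated by an explicit rearrangement rather than a direct bijection between the two factorizations.
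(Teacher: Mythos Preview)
Your overall strategy matches the paper's: use Theorem~\ref{thm:symmetric-and-unimodal} to obtain the rank-generating-function identity, so that bijectivity reduces to surjectivity, and then induct on rank. Your treatment of (S1), (S2), and the empty-pivot subcase of (S3) via the factorization $W/U = W^J\cdot(W_J/U)$ is correct and is a repackaging of the paper's inductive construction (the paper writes $w = w^J w_J$, produces $x',y'$ for $w_J$ by induction, and sets $y = w^J y'$).

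The one genuine gap is exactly where you flag it: the full-pivot transfer. You propose to pass to $w_0u$, which indeed has an empty pivot, but you do not say how to convert the resulting splitting $([e,w_0(w_0u)^{-1}]_L,\,[e,w_0u]_R)$ into one for $u$. The paper avoids this by reformulating \emph{before} inducting: writing $\pi = w_0u^{-1}$, it observes that $W/U=\Lambda_\pi^L$ and $U=(V_\pi^L)^{-1}w_0$, so bijectivity of $W/U\times U\to W$ is equivalent to bijectivity of $\Lambda_\pi^L\times V_\pi^L\to W$, $(x,y)\mapsto yx^{-1}$. In this form the two subcases of (S3) are genuinely symmetric: the anti-automorphism $z\mapsto w_0z$ swaps $\Lambda_\pi^L\leftrightarrow V_{w_0\pi}^L$ and converts $yx^{-1}$ into $w_0\bigl(y'(x')^{-1}\bigr)^{-1}w_0$, so surjectivity for $\pi$ with a full pivot follows immediately from surjectivity for $w_0\pi$ with an empty pivot. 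That is the content of the paper's phrase ``the other case in (S3) being analogous.''

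If you prefer to stay in your formulation, the missing rearrangement is this: whenever $(X,Y)$ is a splitting, so is $(w_0Y^{-1}w_0,\,w_0X^{-1}w_0)$ (take inverses, swap the factors, then conjugate by $w_0$; each step preserves length-additivity and bijectivity). Applying this to $([e,w_0u^{-1}w_0]_L,\,[e,w_0u]_R)$ gives precisely $([e,w_0u^{-1}]_L,\,[e,u]_R)$. This is the same symmetry the paper uses, applied after the induction rather than before.
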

\begin{proof}
By Corollary \ref{cor:closure}, the set of separable elements is closed under the involutions of multiplying on either side by $w_0$ and inversion.  By Proposition \ref{prop:W/U}, we have $W/U=[e,w_0u^{-1}]_L$, and by applying some symmetries of weak order we have
\begin{align*}
    [e,u]_R&=[uw_0,w_0]_R \cdot w_0 \\
    &=[w_0u^{-1},w_0]_L^{-1} \cdot w_0.
\end{align*}
Thus to verify that the multiplication map $W/U \times U \to W$ is a bijection (it is automatically length-additive) it suffices to prove that the map $\Lambda^L_{\pi} \times V^L_{\pi} \to W$ given by $(x,y) \mapsto xy^{-1}w_0$ is bijective for the separable element $\pi=w_0u^{-1}$.  Since taking inverses and multiplying by $w_0$ are involutions on $W$, this in turn is equivalent to checking that $(x,y) \mapsto yx^{-1}$ is a bijection, which we now do.  In light of Theorem \ref{thm:symmetric-and-unimodal}, it suffices to prove surjectivity, so fix $w \in W$ which we will show is in the image of this map.  Assume without loss of generality that $W=W(\Phi)$ is irreducible and $\pi$ is separable with an empty pivot $\alpha_i$, the other case in (S3) being analogous.  Let $J=\Delta \setminus \{\alpha_i\}$ and let $\Phi'$ be the parabolic subsystem generated by $J$.

By induction on rank, we may assume that the claim is true for $W(\Phi')$, so there exist elements $x' \in \Lambda_{\pi'}$ and $y' \in V_{\pi'}$ such that $y'(x')^{-1}=w'$, where $\pi'=\pi|_{\Phi'}$ and $w'=w|_{\Phi'}$.  The element $\pi'$ is still separable, since Theorem \ref{thm:pattern-classification} implies that patterns of separable elements are separable.  Now, viewing $w' \in W(\Phi') \subset W(\Phi)$ as an element of the full group, we have that $w \geq_L w'$, by comparing inversion sets and applying Proposition \ref{prop:weak-given-by-inversions}.  This means that we can write $w=s_{i_1} \cdots s_{i_k}w'$ with lengths adding.  In fact, we have that $w'=w_J$ and $s_{i_1} \cdots s_{i_k} = w^J \in W^J$.  In particular, since $y' \in W_{J}$ we know that $s_{i_1}\cdots s_{i_k}y'$ is reduced; call this element $y$, so $y \geq_L y' \geq_L \pi'=\pi$, thus $y \in V^L_{\pi}$.  We have
\[
y(x')^{-1}=w(w')^{-1}y'(x')^{-1}=w
\]
as desired.
\end{proof}

In Theorem \ref{thm:only-if} we answer Problem \ref{prob:splittings}; in fact we show more, by ruling out splittings not coming from a generalized quotient.  The proof of Theorem \ref{thm:only-if} appears in Section \ref{sec:proof-of-only-if}.

\begin{thm} \label{thm:only-if}
Let $(X,Y)$ be an arbitrary splitting of $W=S_n$, then $X=W/Y$ and $Y=[e,u]_R$ with $u$ separable.
\end{thm}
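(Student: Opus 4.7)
The approach is to prove the conclusion in the order: (a) $Y$ is a lower order ideal in right weak order; (b) $Y = [e,u]_R$ for $u = \bigvee^R Y$, and $u$ is separable; (c) $X = W/Y$. For (a), the core observation is that $e \in X \cap Y$ forces the unique length-additive decomposition of every $y \in Y$ to be $(e, y)$. Given $y \in Y$ with a right descent $s$, set $y' = ys$ and write $y' = x_1 y_1$ in the splitting. A short length calculation shows that $y = x_1(y_1 s)$ is length-additive, with $\ell(y_1 s) = \ell(y_1)+1$. If one can show $y_1 s \in Y$, then uniqueness of the decomposition $(e, y)$ of $y$ forces $x_1 = e$, hence $y' = y_1 \in Y$. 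I would establish $y_1 s \in Y$ by inducting on $\ell(y)$, iterating the splitting on $y_1 s$ and tracking how the accumulated ``$X$-part'' interacts with $Y$ until a chain of factorizations collapses via $X \cap Y = \{e\}$ or produces two distinct decompositions of some element of $W$.

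For (b), let $u = \bigvee^R Y$, which exists since the right weak order on $W$ is a lattice. From (a), $Y \subseteq [e,u]_R$, and Proposition \ref{prop:W/U} gives $W/Y = W/[e,u]_R$. The inclusion $X \subseteq W/Y$ together with $|X||Y| = n!$ yields $|W/[e,u]_R| \cdot |[e,u]_R| \geq n!$, and Theorem \ref{thm:surjectivity} already shows the canonical map $W/[e,u]_R \times [e,u]_R \to W$ is surjective. A minimality argument on a would-be element of $[e,u]_R \setminus Y$ should then force $Y = [e,u]_R$. For the separability of $u$, the pattern-avoidance characterization (Theorem \ref{thm:pattern-classification}) reduces the task to ruling out 3142 and 2413 as patterns of $u$. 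I would argue by contrapositive: if $u$ contains, say, 3142 at positions $i_1 < i_2 < i_3 < i_4$, construct an explicit $w \in W$ with two distinct length-additive decompositions in $X \times Y$, directly contradicting bijectivity. The construction exploits the recursive direct-sum/skew-sum structure of separable permutations: 3142 and 2413 are precisely the obstructions to this recursion, and the associated local failure should yield a local ambiguity in the splitting---likely by restricting attention to the rank-3 parabolic subsystem containing the four relevant positions and transporting the failure back to the full group.

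Step (c) is then immediate: once $Y = [e,u]_R$ with $u$ separable, Theorem \ref{thm:separable-splitting} gives that $(W/Y, Y)$ is itself a splitting, so $|W/Y| \cdot |Y| = n! = |X| \cdot |Y|$, and combined with $X \subseteq W/Y$ this forces $X = W/Y$. The main obstacle I anticipate is the pattern-avoidance step in (b): turning a local 3142 or 2413 pattern in $u$ into a global contradiction with bijectivity of $X \times Y \to W$ requires a careful choice of witness $w$, and it is precisely here that the $W = S_n$ hypothesis (and the explicit list of forbidden patterns) is essential. Theorems \ref{thm:separable-splitting} and \ref{thm:surjectivity} serve as the ``bookends'' of the argument, with the witness construction as its deepest ingredient.
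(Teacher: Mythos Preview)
Your plan has genuine gaps at both of its substantive steps.

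\textbf{Step (a).} The iterative argument for showing $Y$ is a right-weak order ideal does not close. Given $y\in Y$ with right descent $s$, you write $ys = x_1y_1$ and observe $y = x_1(y_1s)$ is length-additive. But the conclusion $x_1=e$ requires $y_1s\in Y$, and your proposed induction on $\ell(y)$ does not supply this: $y_1s$ is obtained from $y_1\in Y$ by an \emph{ascent}, not a descent, so the inductive hypothesis (which concerns elements of $Y$ and their descents) says nothing about it. The vague ``chain of factorizations collapses via $X\cap Y=\{e\}$'' does not rescue this; you would need a concrete mechanism forcing the accumulated $X$-part to vanish, and none is given.

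\textbf{Step (b).} The witness construction is the heart of your proposal, and it is entirely absent. Worse, the suggested mechanism---``restricting to the rank-3 parabolic subsystem containing the four relevant positions''---cannot work: a $3142$ or $2413$ pattern in $u$ at positions $i_1<i_2<i_3<i_4$ lies in a proper parabolic subgroup only when those positions are consecutive. Root-subsystem patterns are generally \emph{not} parabolic, so there is no local subgroup to which one can ``transport the failure back.''

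\textbf{What the paper does.} The paper's argument is structurally different. It never proves directly that $Y$ is an order ideal. Instead, Proposition~\ref{prop:splitting-descends-to-parabolic} gives immediately $X\subseteq [e,x_0]_L$, $Y\subseteq [e,y_0]_R$ with $x_0y_0=w_0$. Induction on $n$ via parabolic restriction (Proposition~\ref{prop:splitting-descends-to-parabolic}(2)) forces $x_0$ to be \emph{minimal} non-separable if it is non-separable at all. The key technical content is then Lemmas~\ref{lem:minimal-not-sep-structure} and~\ref{lem:minimal-not-symmetric}, which show that for minimal non-separable $x_0$ the polynomial $[e,x_0]_L(q)$ fails to be palindromic at a degree $k+1\le n-2$. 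Since $X(q)$ divides $W(q)=[n]_q!$ it must be palindromic; and the parabolic-restriction argument (applied also to the flipped splitting of Proposition~\ref{prop:splitting-iff-flipped-is}) shows $X$ and $[e,x_0]_L$ agree in degrees $0,\dots,n-2$ and $\ell(x_0)-(n-2),\dots,\ell(x_0)$, yielding a contradiction. This palindromicity obstruction is the genuine replacement for your missing witness construction.
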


Theorems \ref{thm:separable-splitting} and \ref{thm:only-if} show that generalized quotients with $U=[e,u]_R$ and $u$ separable are exactly those for which the multiplication map $W/U \times U \to W=S_n$ is a bijection.  Theorem \ref{thm:surjectivity} shows that this map is a \emph{surjection} for every $u$.  

\begin{thm} \label{thm:surjectivity}
Let $u$ be any element of $W=S_n$ and $U=[e,u]_R$, then the multiplication map $W/U \times U \to W$ is surjective.
\end{thm}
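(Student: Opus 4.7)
My plan is to proceed by induction on $\ell(u)$. The base case $\ell(u)=0$ is immediate, since $U=\{e\}$ makes the multiplication map $W\times\{e\}\to W$ the identity on $W$.

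For the inductive step, fix $u$ with $\ell(u)\geq 1$, choose a right descent $s$ of $u$, and set $u' := us$, so that $u = u's$ is length-additive and $\ell(u') = \ell(u)-1$. By the inductive hypothesis applied to $u'$, every $w\in W$ admits a length-additive factorization $w = x' y'$ with $x' \in W/[e,u']_R$ and $y' \in [e,u']_R$. Since $u' \leq_R u$, we have $[e,u']_R \subseteq [e,u]_R$, so $y' \in [e,u]_R$ automatically. If in addition $\ell(x'u) = \ell(x') + \ell(u)$, then $x' \in W/[e,u]_R$ and the pair $(x', y')$ is the desired factorization.

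In the remaining (bad) case, $\ell(x'u) = \ell(x'u') - 1$, so $s$ is a right descent of $x'u'$. Using the length-additive disjoint union $I(x'u') = I(u') \sqcup (u')^{-1}I(x')$ together with $\alpha_s \notin I(u')$, we conclude that $\beta := u'\alpha_s \in I(x')$; equivalently, the reflection $s_\beta = u's(u')^{-1}$ is a Bruhat inversion of $x'$ so $\ell(x's_\beta) = \ell(x')-1$. The task is then to modify $(x',y')$ into a new pair $(x,y)$ giving a length-additive factorization $w=xy$ with $y \in [e,u]_R$ and $x \in W/[e,u]_R$. The natural attempt is to transfer an instance of $s$ from the tail of $x'$ into $y'$: when $\alpha_s \notin I(y')$, one sets $y := y's$ (so $\ell(y) = \ell(y')+1$) and $x := x't$ with $t = y's(y')^{-1}$, with a parallel construction handling the subcase $\alpha_s \in I(y')$.

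The main obstacle is showing that this bad-case modification always succeeds when $W = S_n$. Knowing $u'\alpha_s \in I(x')$ is not quite the same as $y'\alpha_s \in I(x')$, which is the property needed to guarantee $\ell(x't) = \ell(x')-1$ in the proposed transfer, so one needs a more delicate construction---possibly iterating the transfer, or choosing $y$ as a different element of $[e,u]_R$ altogether---justified by specific combinatorial features of $S_n$. In particular, the interpretation of $[e,u]_R$ and $W/[e,u]_R$ as sets of linear extensions of complementary two-dimensional posets on $[n]$ mentioned in the introduction provides the natural setting to carry out this analysis, and it is precisely this $S_n$-specific content that prevents the proof from extending to arbitrary finite Weyl groups, consistent with the conjectural status of the general statement described in Section \ref{sec:conjectures}.
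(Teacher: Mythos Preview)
Your proposal is not a complete proof: you set up an induction on $\ell(u)$, identify a ``bad case'' in the inductive step, suggest a modification $(x',y')\mapsto(x,y)$, and then explicitly concede that the modification need not work as stated (``knowing $u'\alpha_s\in I(x')$ is not quite the same as $y'\alpha_s\in I(x')$\ldots one needs a more delicate construction''). Everything after that is speculation about what such a construction might look like. Concretely, even in the subcase $\alpha_s\notin I(y')$ you would need \emph{both} that $y:=y's\leq_R u$ and that $x:=x'\,y's(y')^{-1}$ has $\ell(x)=\ell(x')-1$ and lies in $W/[e,u]_R$; none of these is established, and simple examples show that a single transfer of $s$ does not suffice in general. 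So the heart of the argument is missing.

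For comparison, the paper does \emph{not} induct on $\ell(u)$. Instead, for a target $w$ it takes $z$ maximal in \emph{strong Bruhat order} among elements with $z\leq_L w$ and $z\leq_R u$, and proves a technical lemma (Lemma~\ref{lem:surjmain}): if $(wz^{-1})(z)(z^{-1}u)$ is not reduced, one can produce $z'>_B z$ still satisfying $z'\leq_L w$ and $z'\leq_R u$. Maximality then forces the product to be reduced, giving $wz^{-1}\in W/[e,u]_R$ and $w=(wz^{-1})\cdot z$. The lemma itself is the hard part and is proved by an intricate wiring-diagram argument specific to $S_n$: one reduces to the case where $z^{-1}u$ has a single right descent and $wz^{-1}$ a single left descent, locates the unique doubly-crossing pair of wires, and constructs $z'$ by a carefully designed cyclic rearrangement along two chains of wires. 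This is exactly the ``more delicate construction'' you allude to, and it does not arise naturally from your inductive framework; the Bruhat-maximality setup is what makes the reduction to a single descent on each side possible.
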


Despite its simple statement, Theorem \ref{thm:surjectivity} is surprisingly difficult to prove, and requires exploiting new connections between the left and right weak orders and the strong Bruhat order; the proof of Theorem \ref{thm:surjectivity} appears in Section \ref{sec:proof-of-surjectivity}.  As an indication of the strength of this Theorem, we discuss in Section \ref{sec:linear-extensions} how it immediately solves an open problem of Morales, Pak, and Panova about linear extensions of posets.

\subsection{Linear extensions and weak order} \label{sec:linear-extensions}

In this section we sketch how Theorem \ref{thm:surjectivity} resolves an open problem of Pak, Panova, and Morales \cite{PPM} by giving a direct combinatorial proof of an inequality due to Sidorenko (Theorem \ref{thm:sidorenko}).  We also give a new $q$-analog of Sidorenko's inequality in Corollary \ref{cor:q-analog-of-sidorenko}.

See \cite{BW-lin-ext} for the following background on linear extensions.  A \emph{linear extension} of a finite poset $P=(\{p_1,\ldots ,p_n\}, \leq_P)$ is an order preserving bijection $\lambda: P \to [n]$, where $[n]$ denotes the set $\{1,\ldots,n\}$ under the usual ordering.  We write $e(P)$ for the number of linear extensions of $P$.  The \emph{order dimension} of $P$ is the smallest number $t$ such that there exist linear extensions $\lambda_1,\ldots,\lambda_t$ such that for all $i,j$ we have $p_i \leq_P p_j$ if and only if $\lambda_k(p_i) \leq \lambda_k(p_j)$ for all $k=1,\ldots,t$.  In this situation we write $P=\bigcap_{k=1}^t \lambda_k$.

We say $P$ is \emph{naturally labelled} if $p_i \mapsto i, \forall i$ is a linear extension.  We may identify linear extensions $\lambda$ of $P$ with permutations in $S_n$ by identifying the linear extension $p_i \mapsto \pi_i, \forall i$ with the permutation $\pi=\pi_1 \ldots \pi_n$, in this case we write $\lambda_{\pi}$ for $\lambda$.  For $\pi \in S_n$, write $P_{\pi}$ for the poset on $\{p_1,...,p_n\}$ defined by $P_{\pi}=\lambda_e \cap \lambda_{\pi}$, such a poset is always naturally labelled.  Two-dimensional posets $P$ have natural \emph{complementary posets} $\overline{P}$ defined as follows: choose an isomorphism from $P$ to some $P_{\pi}$ (this can always be done), and let $\overline{P}=P_{\pi w_0}$.  The poset $\overline{P}$ may not be uniquely determined, as there may be multiple choices for $\pi$, however Theorem \ref{thm:sidorenko} holds for any complement formed from this construction.

A poset $P$ is \emph{series-parallel} if can be formed from combining some number of singleton posets using the operations of disjoint union (elements of $Q$ are incomparable with elements of $Q'$ in $Q \sqcup Q'$) and direct sum (all elements of $Q$ are less than all elements of $Q'$ in $Q \oplus Q'$).

\begin{thm}[Sidorenko \cite{Sidorenko}] \label{thm:sidorenko}
Let $P$ be a two-dimensional poset, then:
\[
e(P)e(\overline{P}) \geq n!,
\]
with equality if and only if $P$ is series-parallel.
\end{thm}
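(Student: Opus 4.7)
My plan is to translate Sidorenko's inequality into a statement about weak order intervals in $S_n$ and then apply Theorems~\ref{thm:surjectivity} and~\ref{thm:only-if}.

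First I would fix an isomorphism $P \cong P_\pi$ and relate linear extensions of $P_\pi$ to an interval in left weak order. A permutation $\sigma$ is a linear extension of $P_\pi$ precisely when $i<j$ and $\pi_i<\pi_j$ imply $\sigma_i<\sigma_j$; under the standard identification of $S_n$ with $W(A_{n-1})$, this is exactly the condition $I_\Phi(\sigma)\subseteq I_\Phi(\pi)$, so Proposition~\ref{prop:weak-given-by-inversions} gives $e(P_\pi)=|[e,\pi]_L|$. Applied to the complement, $e(\overline{P})=e(P_{\pi w_0})=|[e,\pi w_0]_L|$, and since right multiplication by $w_0$ is an anti-automorphism of left weak order, this equals $|[\pi,w_0]_L|=|V^L_\pi|$.

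Next I would set $U:=[e,\pi^{-1}]_R$ and invoke Theorem~\ref{thm:surjectivity}. Because $w\mapsto w^{-1}$ is an isomorphism between the left and right weak orders, $|U|=|[e,\pi]_L|=e(P)$. By Proposition~\ref{prop:W/U}, $W/U=[e,w_0\pi]_L$, and left multiplication by $w_0$ (another anti-automorphism of left weak order) identifies this set with $[\pi,w_0]_L$, so $|W/U|=|V^L_\pi|=e(\overline{P})$. Theorem~\ref{thm:surjectivity} produces a surjection $W/U\times U\to W$, yielding
\[
e(P)\cdot e(\overline{P}) \;=\; |W/U|\cdot|U| \;\geq\; |W| \;=\; n!.
\]

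For the equality case, equality in the above forces this surjection to be a bijection, which by Theorem~\ref{thm:only-if} occurs if and only if $\pi^{-1}$, and hence by Corollary~\ref{cor:closure} also $\pi$, is separable. To close the argument I would invoke the well-known correspondence between the direct and skew sum operations on permutations and the ordinal sum and disjoint union operations on the posets $P_\pi$: separable permutations, being those built from the singleton by $\oplus$ and $\ominus$, correspond precisely to two-dimensional series-parallel posets. The main obstacle is essentially bookkeeping, keeping the various weak-order conventions consistent across the identifications between $\pi$, $\pi^{-1}$, $\pi w_0$, and $w_0\pi$; once the translation is set up, Sidorenko's inequality and its equality classification fall out of Theorems~\ref{thm:surjectivity} and~\ref{thm:only-if}.
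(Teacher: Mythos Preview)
Your proposal is correct and follows essentially the same route as the paper: translate $e(P)$ and $e(\overline{P})$ into the sizes of a weak-order interval $U$ and its generalized quotient $W/U$, invoke Theorem~\ref{thm:surjectivity} for the inequality, and use Theorems~\ref{thm:separable-splitting} and~\ref{thm:only-if} together with the separable/series-parallel correspondence for the equality case. The only cosmetic difference is that you identify linear extensions with $[e,\pi]_L$ directly via Proposition~\ref{prop:weak-given-by-inversions} and then take $U=[e,\pi^{-1}]_R$, whereas the paper cites Proposition~\ref{prop:le-from-intervals} and works with $U=[e,\pi]_R$; these choices differ only by the inversion anti-isomorphism between left and right weak order.
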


Sidorenko's original proof of Theorem \ref{thm:sidorenko} uses intricate analysis of various recurrences and the Max-flow/Min-cut Theorem.  It was reproven by Bollob\'{a}s, Brightwell, and Sidorenko \cite{BBS} using a known special case of the still-open Mahler conjecture from convex geometry and an implication of the difficult Perfect Graph Theorem.  This led Pak, Panova, and Morales \cite{PPM} to state an open problem asking for a direct combinatorial proof; we provide such a proof by applying Theorem \ref{thm:surjectivity}.

\begin{prop}[Bj\"{o}rner and Wachs \cite{BW-lin-ext}] \label{prop:le-from-intervals}
The linear extensions of $P_{\pi}$ are exactly $\{\lambda_u \: | \: u \in [e,\pi]_R\}$.
\end{prop}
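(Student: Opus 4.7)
The plan is to verify the proposition by unwinding definitions and appealing to the inversion-set characterization of weak-order intervals (Proposition \ref{prop:weak-given-by-inversions}).

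First, I would unpack $P_\pi = \lambda_e \cap \lambda_\pi$: since the meet of two linear orders retains only those comparisons on which they agree, we have $p_i <_{P_\pi} p_j$ iff $i < j$ and $\pi_i < \pi_j$. Thus the strict relations of $P_\pi$ are indexed precisely by the non-inversions of $\pi$, regarded as ordered pairs of positions.

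Second, the bijection $\lambda_u : p_i \mapsto u_i$ is a linear extension of $P_\pi$ iff $u_i < u_j$ for every non-inversion $(i,j)$ of $\pi$; by contrapositive, this says exactly that every inversion of $u$ is also an inversion of $\pi$, i.e., $\mathrm{Inv}(u) \subseteq \mathrm{Inv}(\pi)$. Under the standard identification of pairs $(i,j)$ with the positive roots $e_i - e_j$ of type $A_{n-1}$, this containment is precisely the condition characterizing the weak-order interval below $\pi$ given by Proposition \ref{prop:weak-given-by-inversions}, possibly after transferring from left to right weak order via the poset isomorphism $w \mapsto w^{-1}$. Combining the two steps identifies the linear extensions of $P_\pi$ with $\{\lambda_u : u \in [e,\pi]_R\}$.

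The argument is entirely definitional, so there is no real obstacle; the only care required is the bookkeeping between position inversions (natural on the linear-extension side) and the root-inversion set $I_\Phi$ (natural on the weak-order side), as well as the distinction between left and right weak order, both handled uniformly via $w \mapsto w^{-1}$.
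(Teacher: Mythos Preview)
Your unwinding of the definitions is correct up through the line $\mathrm{Inv}(u)\subseteq\mathrm{Inv}(\pi)$: with the paper's identification $\lambda_u(p_i)=u(i)$ and $P_\pi=\lambda_e\cap\lambda_\pi$, one has $p_i<_{P_\pi}p_j$ iff $i<j$ and $\pi(i)<\pi(j)$, and $\lambda_u$ is a linear extension iff every position-inversion of $u$ is one of $\pi$.  Under the paper's conventions this is exactly $I_\Phi(u)\subseteq I_\Phi(\pi)$, which by Proposition~\ref{prop:weak-given-by-inversions} is $u\le_L\pi$, \emph{not} $u\le_R\pi$.

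The gap is in your final sentence.  The isomorphism $w\mapsto w^{-1}$ sends $u\le_L\pi$ to $u^{-1}\le_R\pi^{-1}$, not to $u\le_R\pi$; there is no way to convert the one into the other in general.  Concretely, take $\pi=231=s_1s_2$.  Then $P_\pi$ has relations $p_1<p_2$ only, and its linear extensions are $\{\lambda_u: u\in\{123,132,231\}\}=\{\lambda_u: u\in[e,\pi]_L\}$, whereas $[e,\pi]_R=\{123,213,231\}$.  For $\pi=2413$ one even has $[e,\pi]_L(q)=1+q+2q^2+q^3\neq 1+2q+q^2+q^3=[e,\pi]_R(q)$, so the discrepancy is visible at the level of rank generating functions.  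Thus your argument actually proves the left-weak-order version of the statement; the proposition as printed appears to have $L$ and $R$ transposed relative to the conventions fixed in Section~\ref{sec:background}.  (The paper itself cites this result from \cite{BW-lin-ext} and gives no proof, so there is nothing to compare against; but note that the downstream applications in Section~\ref{sec:linear-extensions} use only $|[e,\pi]_R|=|[e,\pi]_L|$, which holds via the length-reversing bijection $u\mapsto\pi u^{-1}$.)
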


\begin{proof}[New proof of Theorem \ref{thm:sidorenko}]
Pick $\pi$ such that $P$ is isomorphic to $P_{\pi}$.  By Proposition \ref{prop:le-from-intervals}, we need to show that $|[e,\pi]_R| \cdot |[e,\pi w_0]|_R \geq n!$.  We simply observe that inversion gives a bijection $[e,\pi w_0]_R \to [e,w_0 \pi^{-1}]_L=W/[e,\pi]_R$, and apply Theorem \ref{thm:surjectivity}.  Thus we have a simply-defined (just group multiplication) surjection from the set $W/[e,\pi]_R \times [e,\pi]_R$ of cardinality $e(P)e(\overline{P})$ to the set $W=S_n$ of cardinality $n!$. To get the equality case, note that Theorems \ref{thm:separable-splitting}, \ref{thm:only-if}, and \ref{thm:surjectivity} together imply that we have equality if and only if $\pi$ is separable.  It is easy to check that the two cases in (S3) correspond to the operations $\oplus$ and $\sqcup$ on posets, so that $\pi$ is separable if and only if $P_{\pi}$ is series-parallel.
\end{proof}

Let 
\[
e(P_{\pi};q)=[e,\pi]_R(q)=\sum_{u \in [e,\pi]_R} q^{\ell(u)}
\]
denote the generating function for linear extensions of $P_{\pi}$ graded by length (see Proposition \ref{prop:le-from-intervals}).  The following Corollary is a new $q$-analog of Theorem \ref{thm:sidorenko}.

\begin{cor} \label{cor:q-analog-of-sidorenko}
For any $\pi \in S_n$ and any $d \geq 0$ we have:
\[
[q^d]\left(e(P_{\pi};q)e(P_{\pi w_0};q)\right) \geq [q^d] [n]_q!.
\]
\end{cor}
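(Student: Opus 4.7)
The plan is to combine a length-graded version of the argument used in the new proof of Theorem \ref{thm:sidorenko} with the full strength of Theorem \ref{thm:surjectivity}. Set $U = [e,\pi]_R$. The first step is to rewrite the generating function $e(P_{\pi w_0};q)$ in terms of the generalized quotient: inversion $u \mapsto u^{-1}$ is a length-preserving poset isomorphism between right and left weak orders, so it carries $[e,\pi w_0]_R$ bijectively to $[e,(\pi w_0)^{-1}]_L = [e,w_0\pi^{-1}]_L$ without changing lengths (using $w_0^{-1}=w_0$). By Proposition \ref{prop:W/U}, applied with $u_0=\pi$, this latter interval is exactly $W/U$. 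Consequently
\[
e(P_{\pi w_0};q) = \sum_{v \in W/U} q^{\ell(v)} = (W/U)(q),
\]
so the product on the left-hand side of the claim equals $U(q)\cdot (W/U)(q)$.

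Next, I would interpret $[q^d]\bigl(U(q)\cdot (W/U)(q)\bigr)$ combinatorially: it counts pairs $(x,y) \in W/U \times U$ with $\ell(x)+\ell(y)=d$. By the defining length-additivity of the generalized quotient, each such pair satisfies $\ell(xy) = \ell(x)+\ell(y) = d$, so the multiplication map $(x,y)\mapsto xy$ carries this set of pairs into the set of elements of $W=S_n$ of length exactly $d$. Now Theorem \ref{thm:surjectivity} guarantees that this multiplication map is surjective onto $W$, so every $w\in W$ with $\ell(w)=d$ has at least one preimage. Therefore the number of relevant pairs is at least the number of elements of $S_n$ of length $d$, which by Proposition \ref{prop:product-of-degrees} equals $[q^d]W(q) = [q^d][n]_q!$, giving the desired inequality.

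I do not anticipate any serious obstacle beyond invoking Theorem \ref{thm:surjectivity}; all other ingredients are routine manipulations of rank-generating functions and weak-order symmetries. The one point deserving a careful check is that the inversion map really preserves the length grading (which follows from $\ell(u)=\ell(u^{-1})$) so that the two different expressions for $e(P_{\pi w_0};q)$ agree as graded polynomials and not merely as polynomials of the same total degree. After that, the statement of the corollary is just the coefficient-wise shadow of the general principle that a length-additive surjection between two graded sets produces a coefficient-wise domination of their rank-generating functions.
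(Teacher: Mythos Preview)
Your proposal is correct and follows essentially the same route as the paper: identify $e(P_{\pi w_0};q)$ with $(W/U)(q)$ via the length-preserving inversion bijection $[e,\pi w_0]_R \to [e,w_0\pi^{-1}]_L = W/U$, and then use that the multiplication map of Theorem~\ref{thm:surjectivity} is both surjective and length-additive to deduce coefficient-wise domination. The paper's proof is simply a one-line pointer to this argument, noting that the length-additivity upgrades the cardinality inequality of Theorem~\ref{thm:sidorenko} to its $q$-analog.
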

\begin{proof}
Since the surjection in Theorem \ref{thm:surjectivity} is length-additive, this corollary follows from the same argument given for our new proof of Theorem \ref{thm:sidorenko}.
\end{proof}

\subsection{Splittings and surjectivity in other Weyl groups} \label{sec:conjectures}

Although Theorem \ref{thm:separable-splitting} holds for all finite Weyl groups, Theorems \ref{thm:only-if} and \ref{thm:surjectivity} are currently stated only for $W=S_n$.  We conjecture that both extend to arbitrary finite Weyl groups, with an additional restriction in Theorem \ref{thm:only-if}.

\begin{conj}
Theorem \ref{thm:surjectivity} holds for any finite Weyl group $W$.
\end{conj}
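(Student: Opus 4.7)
The plan is to prove surjectivity by induction, adapting the inductive skeleton of the proof of Theorem \ref{thm:separable-splitting} but without the separability hypothesis on $u$. Fix $u, w \in W$; by Proposition \ref{prop:W/U} our task is to produce $y \leq_R u$ and $x \leq_L w_0 u^{-1}$ with $w = xy$, length-additivity then being automatic. If $\Phi$ is reducible, both intervals $[e,u]_R$ and $[e, w_0 u^{-1}]_L$ split across the irreducible summands of $\Phi$ (compare Proposition \ref{prop:separable-in-direct-sum}), so an outer induction on rank reduces us to the case $\Phi$ irreducible.

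For the main induction, fix $u$ and induct on $\ell(w)$. The base case $w = e$ is immediate. For $\ell(w) \geq 1$, pick any left descent $s_\alpha$ of $w$ and write $w = s_\alpha w'$ length-additively. By the inductive hypothesis applied to $w'$, there exist $y' \leq_R u$ and $x' \leq_L w_0 u^{-1}$ with $w' = x' y'$. The naive attempt sets $y = y'$ and $x = s_\alpha x'$. By Proposition \ref{prop:weak-given-by-inversions} this works exactly when $I_\Phi(s_\alpha x') \subseteq I_\Phi(w_0 u^{-1}) = \Phi^+ \setminus I_\Phi(u^{-1})$; equivalently, when prepending $s_\alpha$ does not introduce an inversion of $x$ that lies in $I_\Phi(u^{-1})$.

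The main obstacle is the case where this inclusion fails. Overcoming it is, I expect, precisely the step that requires the new connection between weak and strong Bruhat order that the excerpt advertises for the $S_n$ proof. My plan would be to use a strong-Bruhat-order lifting argument, valid in any Coxeter group, to transfer simple reflections between $x'$ and $y'$, producing an alternative decomposition $w' = \tilde{x}' \tilde{y}'$ to which the prepending of $s_\alpha$ succeeds; throughout, the biconvexity of inversion sets (Proposition \ref{prop:biconvex}) would be used to certify that the transferred factors continue to satisfy $\tilde{y}' \leq_R u$ and $\tilde{x}' \leq_L w_0 u^{-1}$. Showing that such a sequence of lifts terminates with a valid decomposition, uniformly across types, is the crux; a reasonable fallback for the exceptional types $F_4$, $E_6$, $E_7$, $E_8$ is direct computer verification.
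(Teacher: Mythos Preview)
The statement you are attempting to prove is stated in the paper as a \emph{conjecture}; the paper does not supply a proof. So there is no paper proof to compare against, and the relevant question is simply whether your proposal constitutes a proof. It does not: you yourself identify the ``crux'' as unresolved, namely the case where prepending $s_\alpha$ to $x'$ produces an element outside $[e, w_0 u^{-1}]_L$. Your suggested remedy---``transfer simple reflections between $x'$ and $y'$'' via some unspecified strong-order lifting---is not an argument but a hope; there is no indication of which reflections to transfer, why the process terminates, or why the resulting pair remains in the required intervals. This is precisely the difficulty: the paper's proof for $S_n$ (Lemma~\ref{lem:surjmain}) is long and technical because such local adjustments are already delicate in type $A$.

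Two further remarks. First, the paper's $S_n$ argument does \emph{not} induct on $\ell(w)$ as you propose; instead it shows that any Bruhat-maximal element $z$ of $[e,w]_L \cap [e,u]_R$ satisfies $wz^{-1} \in W/U$, by proving that if the product $(wz^{-1})(z)(z^{-1}u)$ were not reduced one could produce a Bruhat-larger $z'$ in the same intersection. Generalizing that lemma to arbitrary finite Weyl groups is a more promising line than your inductive scheme, though the paper's proof relies on wiring diagrams in an essential way. Second, your fallback of direct computer verification for the exceptional types is not realistic for $E_8$: with $|W|\approx 7\times 10^8$, even granting the conjecture one must compute at least $|W|$ products for each of the $|W|$ choices of $u$, on the order of $5\times 10^{17}$ group multiplications.
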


\begin{conj} \label{conj:other-types-only-if}
Let $[e,u]_R=U \subseteq W$, then $(W/U,U)$ is a splitting of $W$ if and only if $u$ is separable.
\end{conj}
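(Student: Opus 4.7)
The ``if'' direction of Conjecture \ref{conj:other-types-only-if} is Theorem \ref{thm:separable-splitting}, so the task is the converse: show that if $u \in W$ is not separable, then $(W/U, U)$ is not a splitting, where $U = [e, u]_R$. The plan is to argue by contrapositive, using the pattern-avoidance characterization of Theorem \ref{thm:pattern-classification}: non-separability of $u$ is equivalent to $u$ containing one of the finitely many forbidden root system patterns $(w', \Phi')$ in types $A_3$, $B_2$, or $G_2$.

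I would proceed by induction on the rank of $W$. The base cases consist of a finite, explicit check: for each forbidden pattern $(w', \Phi')$, verify directly in the small-rank Weyl group $W(\Phi')$ that $(W(\Phi')/[e, w']_R, [e, w']_R)$ fails to be a splitting by producing two distinct pairs with the same product. For the inductive step, given that $u \in W(\Phi)$ contains $(w', \Phi')$, the goal is to lift such a collision $x_1 y_1 = x_2 y_2$ from $W(\Phi')$ to a collision in $W(\Phi)$. When $\Phi' = \Phi_J$ is parabolic, the parabolic decomposition $W = W^J \cdot W_J$ (Proposition \ref{prop:parabolic-quotient}) combined with biconvexity of inversion sets (Proposition \ref{prop:biconvex}) should allow the lifting to be carried out cleanly, in a manner analogous to the inductive step in the proof of Theorem \ref{thm:separable-splitting}.

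An alternative, and perhaps more elegant, route is to assume the conjectural extension of Theorem \ref{thm:surjectivity}, under which the length-additive map $W/U \times U \to W$ is always surjective; bijectivity then reduces to the single numerical condition $|W/U| \cdot |U| = |W|$, so it suffices to demonstrate strict inequality whenever $u$ is non-separable. A natural strengthening would be to show that the polynomial $(W/U)(q) \cdot U(q) - W(q)$ has at least one strictly positive coefficient, which is a stronger $q$-analog of the same statement.

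The main obstacle will be the case where $\Phi' \subseteq \Phi$ is a non-parabolic subsystem: non-parabolic $A_3$ subsystems occur in types $B_n$, $D_n$, and $E_n$, and non-parabolic $G_2$ subsystems occur in $F_4$. For such $\Phi'$ there is no clean parabolic coset decomposition to support the lifting, and one must work directly with the root-system geometry. A plausible fix is to show that whenever $u$ contains a forbidden pattern in a non-parabolic $\Phi'$, it must also contain a forbidden pattern in some parabolic $\Phi_J$, reducing to the parabolic case; but this dichotomy is not obvious and may itself require type-by-type verification. Producing a fully type-uniform argument, in the spirit of the type-independent proof of Theorem \ref{thm:separable-splitting}, is likely to demand new combinatorial tools beyond those developed in \cite{First-separable-paper}.
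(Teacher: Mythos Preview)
The statement you are attempting to prove is labeled as a \emph{conjecture} in the paper (Conjecture~\ref{conj:other-types-only-if}); the paper does \emph{not} prove it, so there is no proof to compare your proposal against. What the paper does prove is the special case $W=S_n$ (Theorem~\ref{thm:only-if}), and the authors explicitly remark that extending their method to general $W$ would require a type-independent understanding of minimal non-separable elements, which they do not have.

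It is worth noting that the paper's approach for $S_n$ is quite different from yours. Rather than lifting collisions from subsystems, the paper argues as follows: any splitting $(X,Y)$ gives a factorization $W(q)=X(q)Y(q)$, forcing $X(q)$ to be a product of cyclotomic polynomials and hence palindromic. Using Proposition~\ref{prop:splitting-descends-to-parabolic} and induction, one reduces to the case where the maximal element $x_0$ of $X$ is \emph{minimal non-separable}, and then shows (Lemma~\ref{lem:minimal-not-symmetric}) that $[e,x_0]_L(q)$ fails to be palindromic at a specific low degree. Since $X$ is shown to agree with $[e,x_0]_L$ in both low and high degrees, $X(q)$ itself is not palindromic, a contradiction. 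This strategy sidesteps the non-parabolic subsystem issue entirely, at the cost of requiring a detailed structural analysis of minimal non-separable elements (Lemmas~\ref{lem:minimal-not-sep-structure} and~\ref{lem:minimal-not-symmetric}), which is where the type-$A$ assumption is genuinely used.

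Your collision-lifting proposal has a real gap even in the parabolic case. If $\Phi'=\Phi_J$, then $u|_{\Phi_J}=u_J$ where $u=u^J u_J$; but this means $u_J \leq_L u$, not $u_J \leq_R u$, so there is no obvious containment $[e,u_J]_R \subseteq [e,u]_R$ and a collision in $W_J/[e,u_J]_R \times [e,u_J]_R$ does not directly transport to $W/[e,u]_R \times [e,u]_R$. The paper's Proposition~\ref{prop:splitting-descends-to-parabolic}(2) goes the other direction (restricting a splitting of $W$ to one of $W_J$), and that is what drives the inductive reduction to minimal non-separable elements; your inductive direction would need a genuinely new mechanism.
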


\begin{remark}
For the exceptional Weyl group $W$ of type $F_4$, there is a splitting $(W/U,U)$ where $U$ is not an interval in right weak order; this is why the statement of Conjecture \ref{conj:other-types-only-if} is weaker than that of Theorem \ref{thm:only-if}.  It may be that the full strength of Theorem \ref{thm:only-if} holds for the remaining infinite families of Weyl groups of types $B_n=C_n$ and $D_n$.
\end{remark}

\section{Product formulas and graph associahedra} \label{sec:associahedra}

In this section we show that separable elements in $W$ are in bijection with the faces of all dimensions of $2^r$ copies of the \emph{graph associahedron} $A(\Gamma)$ of the Dynkin diagram $\Gamma$ for $W$, where $W$ has $r$ irreducible factors.  The Dynkin diagram is a graph with vertices indexed by the simple roots $\Delta$ and edges $\overline{\alpha \alpha'}$ whenever $s_{\alpha}$ and $s_{\alpha'}$ do not commute; we often identify subgraphs of $\Gamma$ with the corresponding subsets of $\Delta$ when convenient.  It is well-known that all connected components of Dynkin diagrams of finite Weyl groups are trees.  Much useful information about a separable element $w$, such as its Lehmer code, the rank generating functions $\Lambda_w(q)$ and $V_w(q)$, and a factorization of $w$ as a product of elements of the form $w_0(J)$ can be read off from the corresponding face of $A(\Gamma)$.

Given a graph $\Gamma$, the \emph{graph associahedron} $A(\Gamma)$ is a convex polytope which can be defined as the Minkowski sum of coordinate simplices corresponding to the connected subgraphs of $\Gamma$.  First arising in the work of De Concini and Procesi on wonderful models of subspace arrangements \cite{DeConcini-Procesi}, these polytopes have received intensive study, especially in the case when $\Gamma$ is a Dynkin diagram.  When $\Gamma$ is the Dynkin diagram of type $A_n$, a line graph, $A(\Gamma)$ is the usual Stasheff Associahedron.  


We will use a model for the faces of $A(\Gamma)$ due originally to Carr and Devadoss \cite{Carr-Devadoss}; our treatment follows the presentation of Postnikov \cite{beyond-paper}. A collection $\mathcal{N}$ of subsets of $\Gamma$ is a \emph{nested set} if:
\begin{itemize}
    \item[(N1)] For all $J \in \mathcal{N}$, the induced subgraph $\Gamma|_J$ on the vertex set $J$ is connected.
    \item[(N2)] For any $I,J \in \mathcal{N}$ we have either $I \subseteq J,$ or $J \subseteq I,$ or $I \cap J = \emptyset$.
    \item[(N3)] For any collection of $k \geq 2$ disjoint subsets $J_1,...,J_k \in \mathcal{N}$, the subgraph $\Gamma|_{J_1 \cup \cdots \cup J_k}$ is \emph{not} connected.
\end{itemize}
The relevant notion of connectivity for directed graphs is the connectivity of the associated simple undirected graph, so the structure of $A(\Gamma)$ does not depend on an orientation of $\Gamma$.  This is why we have omitted reference to the edge multiplicities and orientations in our definition of Dynkin diagrams.

\begin{prop}[Carr and Devadoss \cite{Carr-Devadoss}; Postnikov \cite{beyond-paper}] \label{prop:faces-are-nested}
The poset of faces of $A(\Gamma)$ is isomorphic to the poset of nested sets on $\Gamma$ which contain all connected components of $\Gamma$, ordered by reverse containment.
\end{prop}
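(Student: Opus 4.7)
The plan is to realize $A(\Gamma)$ via the Minkowski sum decomposition $A(\Gamma) = \sum_{J \in \mathcal{B}} \Delta_J$, where $\mathcal{B}$ is the collection of $J \subseteq V(\Gamma)$ with $\Gamma|_J$ connected and $\Delta_J := \mathrm{conv}\{e_v : v \in J\}$. A standard fact about Minkowski sums gives that the face of $A(\Gamma)$ selected by a linear functional $c \in \mathbb{R}^{V(\Gamma)}$ equals $\sum_{J \in \mathcal{B}} \mathrm{conv}\{e_v : v \in M_J(c)\}$, where $M_J(c) := \{v \in J : c_v = \max_{v' \in J} c_{v'}\}$, and two functionals select the same face precisely when they yield the same $M_J(c)$ for every $J \in \mathcal{B}$. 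I would then reduce the proposition to showing that such equivalence classes of functionals are parameterized by nested sets containing the connected components of $\Gamma$, with reverse inclusion matching face containment.

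From a functional $c$, I would define $\mathcal{N}(c)$ to be the collection of connected components of $\Gamma|_{M_J(c)}$ as $J$ ranges over $\mathcal{B}$, together with the connected components of $\Gamma$ itself. Condition (N1) is immediate. For (N2), if $A, A' \in \mathcal{N}(c)$ overlap then $A \cup A'$ is connected and $c$ is constant on it, so maximality as a connected component of some $M_J(c)$ forces $A \subseteq A'$ or vice versa. Condition (N3) follows by the same logic: a family of disjoint elements of $\mathcal{N}(c)$ whose union was connected in $\Gamma$ would contradict that each was a connected component, not merely a subset, of its $M_J(c)$. Conversely, given a nested set $\mathcal{N}$, I would produce a realizing functional $c_\mathcal{N}$ by exploiting the forest structure on $\mathcal{N}$ furnished by (N2), whose roots are the connected components of $\Gamma$: assign to each $v$ a value determined by the depth of the smallest element of $\mathcal{N}$ containing $v$, with strictly deeper vertices receiving strictly larger values and remaining ties broken arbitrarily.

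The main obstacle I expect is the verification that $\mathcal{N}(c_\mathcal{N}) = \mathcal{N}$, and in particular that no extraneous elements are produced. Axiom (N3) is crucial here: without it, a disjoint pair $A, A' \in \mathcal{N}$ whose union is connected in $\Gamma$ would let $M_{A \cup A'}(c_\mathcal{N}) = A \cup A'$ produce a spurious new element of $\mathcal{N}(c_\mathcal{N})$, so (N3) is precisely what prevents this pathology. Once the bijection is established, the poset anti-isomorphism follows because passing from a face $F$ to a larger face $F' \supseteq F$ corresponds to moving $c$ onto the boundary of its normal cone, which can only enlarge the argmax sets $M_J(c)$ by introducing new ties; the induced sets in $\mathcal{N}(\cdot)$ only merge or grow, yielding $\mathcal{N}(F') \subseteq \mathcal{N}(F)$ as required.
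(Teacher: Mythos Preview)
The paper does not prove this proposition; it is quoted as a known result of Carr--Devadoss and Postnikov and no argument is given. So there is nothing in the paper to compare your sketch against, and I will simply assess the sketch on its own terms.

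Your overall plan---realize $A(\Gamma)$ as the Minkowski sum $\sum_{J}\Delta_J$ over connected $J$, then parameterize faces by the data $(M_J(c))_{J}$ of argmax sets---is exactly the right framework and is how Postnikov proceeds. The gap is in your definition of $\mathcal{N}(c)$ as ``the connected components of $\Gamma|_{M_J(c)}$ as $J$ ranges over $\mathcal{B}$, together with the connected components of $\Gamma$.'' This collection is essentially never a nested set. First, since $M_{\{v\}}(c)=\{v\}$ for every vertex $v$, your $\mathcal{N}(c)$ always contains every singleton, and any two adjacent singletons already violate (N3). Second, and more decisively, take $\Gamma$ the path $1\text{--}2\text{--}3$ with $c$ constant: then $M_J(c)=J$ for all $J$, so $\mathcal{N}(c)$ contains both $\{1,2\}$ and $\{2,3\}$, violating (N2). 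Your (N2) argument fails precisely here: the ``maximality'' you invoke is only maximality as a component of $M_J(c)$ for one particular $J$, and nothing prevents a different $J'$ from producing an overlapping, incomparable component.

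The repair requires a genuinely different assignment $c\mapsto\mathcal{N}(c)$. One that works: declare $J\in\mathcal{N}(c)$ if and only if $J$ is connected and, for every connected $K\in\mathcal{B}$ properly containing $J$, there exists $v\in K\setminus J$ with $c_v>\max_{u\in J}c_u$ (equivalently, $J$ is a connected component of a sublevel set $\{u:c_u\le t\}$ for some $t$, taken inside the connected component of $\Gamma$ containing $J$). This is the description Postnikov uses to identify the normal fan with the nested-set fan; note that it records sublevel, not superlevel, data, which is why your argmax-based definition loses information. With that corrected map in hand, your outline for the inverse (assign $c$-values by depth in $\mathcal{N}$) and for the order-reversal is sound.
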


We call a total ordering of the elements of a nested set $\mathcal{N}$ \emph{monotonic} if $J$ appears after $I$ whenever $J \subseteq I$.  The \emph{depth} of $J \in \mathcal{N}$ is the maximum length $k$ of a chain $J \subsetneq I_1 \subsetneq I_2 \subsetneq \cdots \subsetneq I_k$ of elements of $\mathcal{N}$.  We let $\mathcal{N}_{even}$ and $\mathcal{N}_{odd}$ denote the elements of $\mathcal{N}$ of even and odd depth respectively.

\begin{thm} \label{thm:bijection-to-faces}
Let $W$ be a finite Weyl group with $r$ irreducible components and with Dynkin diagram $\Gamma$, then:
\begin{enumerate}
    \item The nested sets on $\Gamma$ are in bijection with the separable elements of $W$ via the map 
    \[
    \mathcal{N} \mapsto \prod_{J \in \mathcal{N}} w_0(J):=w(\mathcal{N}), 
    \]
    where the product is taken in any monotonic order.  Thus the number of separable elements of $W$ is $2^r$ times the number of faces of $A(\Gamma)$.
    \item The weak order rank generating functions of the intervals $[e, w(\mathcal{N})]$ in left and right order are:
    \[
    \Lambda^L_{w(\mathcal{N})}(q)=q^{\ell(w(\mathcal{N}))}\Lambda^R_{w(\mathcal{N})}(q^{-1})=\frac{\prod_{J \in \mathcal{N}_{even}} W_J(q)}{\prod_{J \in \mathcal{N}_{odd}} W_J(q)}.
    \]
\end{enumerate}
\end{thm}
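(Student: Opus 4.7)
The plan is to prove both parts by strong induction on the rank of $W$. As a preliminary, I verify well-definedness of $w(\mathcal{N}) = \prod_{J \in \mathcal{N}} w_0(J)$: if $I, J \in \mathcal{N}$ are incomparable, (N2) forces $I \cap J = \emptyset$ and (N3) forces $\Gamma|_{I \cup J}$ to be disconnected, so $W_I$ and $W_J$ commute pairwise. Hence any two monotonic orders on $\mathcal{N}$ yield the same length-additive product.

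For part (1), the reducibility case (S2) follows by decomposing nested sets on $\Gamma = \bigsqcup \Gamma_i$ as disjoint unions of nested sets on each $\Gamma_i$ (forced by (N1)) and invoking Proposition \ref{prop:separable-in-direct-sum}. For irreducible $\Phi$ with connected $\Gamma$, I construct the inverse bijection by recursion on the pivot structure. For separable $w \neq e$ with pivot $\alpha_i$ and $J := \Delta \setminus \{\alpha_i\}$: if $\alpha_i$ is empty, then $w = w|_{\Phi_J}$ and I set $\mathcal{N}_w := \mathcal{N}_{w|_{\Phi_J}}$ (viewed inside $\Gamma$); if $\alpha_i$ is full, then the inversion set $I(w) = (\Phi^+ \setminus \Phi_J^+) \sqcup I(w|_{\Phi_J})$ gives the length-additive factorization $w = w_0^J \cdot w|_{\Phi_J} = w_0 \cdot u$ with $u := w_0(J) \cdot w|_{\Phi_J}$ separable in $W_J$, and I set $\mathcal{N}_w := \{\Delta\} \cup \mathcal{N}_u$. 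The identity $w(\mathcal{N}_w) = w$ then follows from the induction hypothesis.

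For part (2), I again induct on rank and, for irreducible $\Phi$, split on whether $\Delta \in \mathcal{N}$. If $\Delta \notin \mathcal{N}$, the maximal elements $J_1, \ldots, J_k$ of $\mathcal{N}$ are pairwise disjoint with disconnected union, so $w(\mathcal{N}) \in W_{\bigcup J_l} = \prod_l W_{J_l}$ and $[e, w(\mathcal{N})]_L$ factors as a product of intervals in the $W_{J_l}$; depths in $\mathcal{N}$ match depths in each $\mathcal{N}_l := \mathcal{N} \cap 2^{J_l}$, so the rank-reduced induction hypothesis on each factor assembles into the claimed formula. If $\Delta \in \mathcal{N}$, write $\mathcal{N} = \{\Delta\} \cup \mathcal{N}'$; then $w(\mathcal{N}) = w_0 \cdot w(\mathcal{N}')$, and the anti-automorphism $x \mapsto w_0 x$ of left weak order gives an order-reversing bijection $V^L_{w(\mathcal{N})} \to [e, w(\mathcal{N}')]_L$. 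Rank-symmetry of the latter (Theorem \ref{thm:symmetric-and-unimodal}) gives $V^L_{w(\mathcal{N})}(q) = \Lambda^L_{w(\mathcal{N}')}(q)$, and combining with $\Lambda_{w(\mathcal{N})}(q) V_{w(\mathcal{N})}(q) = W(q)$ yields $\Lambda^L_{w(\mathcal{N})}(q) = W(q)/\Lambda^L_{w(\mathcal{N}')}(q)$. Applying the formula (already proved for $\mathcal{N}'$ in the same rank, since $\Delta \notin \mathcal{N}'$) and using that prepending $\Delta$ shifts all depths by $+1$ (swapping parities), the claimed formula drops out. The formula for $\Lambda^R$ follows from the parallel induction in right weak order, after which the first equality reduces to rank-symmetry of $\Lambda^R_{w(\mathcal{N})}$.

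The main obstacle I anticipate is in part (1): proving $\mathcal{N}_w$ is independent of the pivot choice when $w$ admits several valid pivots (e.g., $w_0$ in irreducible rank $\geq 2$ has every simple root as a full pivot). I expect this to yield to direct case analysis showing that any two valid pivots produce commuting recursive decompositions. A secondary subtlety is the $\Lambda^R$ case when $\Delta \in \mathcal{N}$: the right-weak analog of $w_0 \cdot w(\mathcal{N}) = w(\mathcal{N}')$ involves conjugation by $w_0$, so one must either absorb this into the Dynkin diagram automorphism $-w_0$ (which permutes nested sets without changing the value of the product formula) or argue via the inversion isomorphism $\Lambda^R_w(q) = \Lambda^L_{w^{-1}}(q)$ combined with matching profiles of $\mathcal{N}_w$ and $\mathcal{N}_{w^{-1}}$.
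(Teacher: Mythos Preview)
Your approach is essentially the same as the paper's: induction on rank, splitting on whether $\Gamma \in \mathcal{N}$, constructing the inverse map via the pivot structure, and invoking Theorem~\ref{thm:symmetric-and-unimodal} in the case $\Gamma \in \mathcal{N}$. The paper packages the $\Gamma \notin \mathcal{N}$ step slightly differently---it first proves a small lemma that some vertex $x$ lies in no element of $\mathcal{N}$, then passes to $W_{\Gamma\setminus\{x\}}$---but this is equivalent to your decomposition along the maximal elements of $\mathcal{N}$.

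Two remarks on your anticipated obstacles. First, the pivot-independence worry in part~(1) dissolves once you also verify the other composition $\mathcal{N}_{w(\mathcal{N}')}=\mathcal{N}'$ (the paper states this direction is ``similar''); then $w(-)$ is a bijection and its inverse is unique, so $\mathcal{N}_w$ cannot depend on the pivot chosen. Second, and more substantively, you are working much too hard on the $\Lambda^R$ identity. The equality $\Lambda^L_w(q)=q^{\ell(w)}\Lambda^R_w(q^{-1})$ holds for \emph{every} $w$ in any Coxeter group, with no separability, induction, conjugation by $w_0$, or rank-symmetry needed: the map $a\mapsto wa^{-1}$ is a bijection $[e,w]_L\to[e,w]_R$ sending rank $k$ to rank $\ell(w)-k$, since $a\leq_L w$ means $w=ba$ length-additively, hence $b=wa^{-1}\leq_R w$ with $\ell(b)=\ell(w)-\ell(a)$. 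This is exactly how the paper handles it, in one line at the end of the proof.
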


The proof of Theorem \ref{thm:bijection-to-faces} appears in Section \ref{sec:proof-of-bijection} below.

\begin{remark}
By Proposition \ref{prop:product-of-degrees}, the rank generating functions $W_J(q)$ appearing in the product formula are themselves products of the $q$-integers of the degrees of $W_J$, thus one may expand the formula in Theorem \ref{thm:bijection-to-faces} (2) as a quotient of products of $q$-integers.  Note that the $W_J$ appearing in this formula may be of several different Cartan-Killing types and so this product will contain degrees from these several families.  The $q$-integers in the denominator do not always pair up to cancel with those in the numerator, so even the fact that this quotient is a polynomial is nontrivial.

This product formula generalizes several known formulas for rank generating functions of intervals in the weak order.  In the case of 132-avoiding permutations (a subset of separable permutations) this formula is equivalent to the hook-length formula for linear extensions of trees (see \cite{BW-lin-ext}).  More generally, for separable permutations Wei \cite{Wei2012Product} gives a formula in terms of \emph{separating trees} which is equivalent to (2) in the case $W=S_n$, however Wei's formula has several cases and the separating tree corresponding to $w$ is not uniquely defined.  It is known that computing even the size of weak order intervals is $\# P$-hard \cite{Dittmer-Pak}, so there can be no nice formulas for $\Lambda_w(q)$ in general, making this formula notable. 
\end{remark}

We now describe the inverse to the map $\mc{N} \mapsto w(\mc{N})$; that this map is indeed an inverse is proven in Section \ref{sec:proof-of-bijection}.  Given a separable element $w$ of $W$, we inductively construct a nested set $\mc{N}_w$ as follows.  If $W$ is of type $A_1$, so that $\Gamma=\{\alpha_1\}$ is a single vertex, we set $\mc{N}_e=\emptyset$ and $\mc{N}_{\tau}=\{\Gamma\}$, where $\tau$ is the non-identity element of $W$.  If $W=W_1 \times W_2$ is reducible, and $w=(w_1,w_2)$, then $\mc{N}_w=\mc{N}_{w_1} \sqcup \mc{N}_{w_2}$.  Otherwise, assume we have constructed $\mc{N}_u$ for separable elements $u$ in all Weyl groups of smaller rank than $W$, and that $W$ is irreducible.  Let $\alpha \in \Delta$ be a pivot of $w$; let $\Gamma_1,...,\Gamma_k$ be the connected components of $\Gamma \setminus \{\alpha\}$.
\begin{itemize}
    \item If $\alpha \in \Delta$ is an empty pivot of $w$, then let \[
    \mc{N}_w = \bigsqcup_{i=1}^k \mc{N}_{w_{\Gamma_i}}
    \]
    \item If $\alpha \in \Delta$ is a full pivot of $w$, then $w_0w$ is separable and has $\alpha$ as an empty pivot; in this case we let
    \[
    \mc{N}_w = \{\Gamma\} \sqcup \mc{N}_{w_0w}.
    \]
\end{itemize}
That $\mc{N}_w$ is a nested set on $\Gamma$ is easily verified by induction.

\begin{example}
Let $w$ be the element in the Weyl group $W$ of type $B_4$ from Example \ref{ex:sep-in-B4}.  Let $\Gamma$ be the Dynkin diagram, whose vertices are the simple roots $\alpha_1,\alpha_2, \alpha_3$ and $\alpha_4$.  Then $w$ corresponds to the nested set 
\[
\mathcal{N}_w=\{\{\alpha_1, \alpha_2, \alpha_3, \alpha_4\},\{\alpha_1, \alpha_2\},\{\alpha_2\},\{\alpha_4\}\}.
\]
The degrees of $W$ are $2,4,6,$ and $8$, and a Weyl group of type $A_{n-1}$ has degrees $2,3,\ldots,n$, thus part (2) of Theorem \ref{thm:bijection-to-faces} implies that 
\[
\Lambda_w^L(q)=q^{13}\Lambda^R_w(q^{-1})=\frac{\left([2]_q [4]_q [6]_q [8]_q \right) \left( [2]_q \right)}{\left([2]_q [3]_q\right) \left([2]_q \right)}=[4]_q [8]_q (1+q^3).
\]
\end{example}

\subsection{Proof of Theorem \ref{thm:bijection-to-faces}}
\label{sec:proof-of-bijection}

Throughout this section we assume for convenience that $W$ is irreducible, so the Dynkin diagram $\Gamma$ is connected.  We lose no generality by this assumption since:
\begin{itemize}
    \item nested sets on $\Gamma_1 \sqcup \Gamma_2$ are all of the form $\mc{N}_1 \sqcup \mc{N}_2$ with $\mc{N}_i$ a nested set on $\Gamma_i$,
    \item separable elements in $W_1 \times W_2$ are just pairs $(w_1,w_2)$ with $w_i$ a separable element in $W_i$, and 
    \item $\Lambda_{(w_1,w_2)}(q)=\Lambda_{w_1}(q)\Lambda_{w_2}(q)$.
\end{itemize}

\begin{lem} \label{lem:nested-without-maximal}
Let $\mc{N}$ be a nested set on a connected graph $\Gamma$ with $\Gamma \not \in \mc{N}$, then there is some $x \in \Gamma$ which is not contained in any element of $\mc{N}$.
\end{lem}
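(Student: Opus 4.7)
The plan is to consider the maximal elements of $\mathcal{N}$ under containment and show that they cannot jointly cover all of $\Gamma$. Let $J_1, \ldots, J_k$ be the maximal elements of $\mathcal{N}$ with respect to inclusion. By (N2), any two elements of $\mathcal{N}$ are nested or disjoint, and since the $J_i$ are maximal they cannot be nested, so they are pairwise disjoint. Moreover every $J \in \mathcal{N}$ is contained in some $J_i$ (take any $J_i$ containing it, which exists by maximality), so $\bigcup_{J \in \mathcal{N}} J = J_1 \cup \cdots \cup J_k$. It therefore suffices to exhibit a vertex of $\Gamma$ lying outside $J_1 \cup \cdots \cup J_k$.

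I would then split into three cases on $k$. If $k=0$, then $\mathcal{N} = \emptyset$ and any vertex of the (nonempty) connected graph $\Gamma$ works. If $k=1$, then $J_1 \in \mathcal{N}$, and since $\Gamma \notin \mathcal{N}$ we have $J_1 \subsetneq \Gamma$, so any vertex of $\Gamma \setminus J_1$ works. If $k \geq 2$, property (N3) applies to the disjoint collection $J_1, \ldots, J_k$ and tells us that $\Gamma|_{J_1 \cup \cdots \cup J_k}$ is disconnected; since $\Gamma$ itself is connected, we cannot have $J_1 \cup \cdots \cup J_k = V(\Gamma)$, so again some vertex of $\Gamma$ lies outside the union.

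There is no real obstacle here; the only subtlety is organizing the case $k \geq 2$ so that (N3) is used correctly, and noting that in the edge case $k=1$ we must use the hypothesis $\Gamma \notin \mathcal{N}$ (which is unused in the other cases, modulo ensuring that the maximal element is proper). All three cases combine to produce a vertex $x \in \Gamma$ not contained in any element of $\mathcal{N}$, completing the lemma.
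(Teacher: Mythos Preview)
Your argument is correct and follows essentially the same strategy as the paper's: both identify maximal elements of $\mathcal{N}$, use (N2) to see they are pairwise disjoint, and then invoke (N3) together with the connectedness of $\Gamma$ to derive a contradiction with full coverage. The paper's version picks a single vertex, finds an edge from its maximal set $J_y$ to an outside vertex lying in another maximal set $J_z$, and contradicts (N3) for that pair; your version applies (N3) to all maximal elements simultaneously, which is slightly cleaner but the same idea.
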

\begin{proof} 
Supposing otherwise, start with any element $y \in \Gamma$ and let $J_y$ be the maximal element of $\mc{N}$ containing $y$ (this exists by (N2)).  Let $y'$ be an element of $J_y$ which has an edge $\overline{y'z}$ with $z \not \in J_y$; such a $y'$ exists since $J_y \neq \Gamma$.  Now $z \in J_z$ for some $J_z \in \mc{N}$.  We may not have $J_z \subseteq J_y$, since $z \not \in J_y$, nor $J_y \subseteq J_z$, by the maximality of $J_y$, thus $J_z$ and $J_y$ are disjoint.  By (N1), we have that $\Gamma|_{J_y}$ and $\Gamma|_{J_z}$ are connected, but this implies that $\Gamma|_{J_y \cup J_z}$ is connected, since $\overline{y'z}$ is an edge, contradicting (N3), so the desired element $x$ exists.  
\end{proof}

\begin{lem} \label{lem:lands-in-separables}
For any nested set $\mc{N}$ on $\Gamma$, the element $w(\mc{N}) \in W$ is separable.
\end{lem}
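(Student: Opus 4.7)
The plan is to proceed by induction on $|\Gamma|$, treating the irreducible case (to which the claim reduces by the remarks preceding the lemma). The base case $|\Gamma|=1$ is immediate: the two nested sets $\emptyset$ and $\{\Gamma\}$ yield $e$ and $w_0 = s_\alpha$, both separable by (S1).

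For the inductive step I split on whether $\Gamma \in \mc{N}$. If $\Gamma \notin \mc{N}$, I invoke Lemma \ref{lem:nested-without-maximal} to obtain a simple root $\alpha \in \Gamma$ lying in no element of $\mc{N}$. Every $J \in \mc{N}$ is then a connected subgraph avoiding $\alpha$, and so lies in a unique connected component $\Gamma_i$ of $\Gamma \setminus \{\alpha\}$. This partitions $\mc{N} = \bigsqcup_{i=1}^k \mc{N}_i$; each $\mc{N}_i$ is readily checked to be a nested set on $\Gamma_i$ (the only nontrivial point being (N3), which follows because connectivity within $\Gamma_i$ implies connectivity within $\Gamma$). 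Since the $\Gamma_i$ are pairwise vertex-disjoint, the $w(\mc{N}_i)$ commute, so any monotonic ordering of $\mc{N}$ yields $w(\mc{N}) = \prod_i w(\mc{N}_i) \in W_J$ with $J = \Delta \setminus \{\alpha\}$ and $w(\mc{N})|_{\Phi_{\Gamma_i}} = w(\mc{N}_i)$. By induction each $w(\mc{N}_i)$ is separable, so by (S2) the element $w(\mc{N})|_{\Phi_J}$ is separable. Finally, $\alpha$ serves as an empty pivot, because any $\beta \in \Phi^+$ with $\beta \geq \alpha$ has nonzero $\alpha$-coefficient and hence lies outside $\Phi_J$; since $I_{\Phi}(w(\mc{N})) \subseteq \Phi_J^+$, no such $\beta$ is an inversion. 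This verifies (S3).

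If $\Gamma \in \mc{N}$, I set $\mc{N}' = \mc{N} \setminus \{\Gamma\}$, still a nested set but not containing $\Gamma$. As $\Gamma$ properly contains every other element of $\mc{N}$, it must appear first in any monotonic ordering, so $w(\mc{N}) = w_0 \cdot w(\mc{N}')$. The previous case applied to $\mc{N}'$ shows $w(\mc{N}')$ is separable, and Corollary \ref{cor:closure} then gives separability of $w(\mc{N})$.

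A preliminary sanity check is that $w(\mc{N})$ is independent of the chosen monotonic ordering: two such orderings differ only by swaps of sets at the same nesting depth, which are disjoint by (N2) and hence contribute commuting factors $w_0(J)$. The main obstacle I anticipate is ensuring that the simple root produced by Lemma \ref{lem:nested-without-maximal} actually realizes the empty-pivot condition of (S3) rather than merely lying outside $\mc{N}$; fortunately, the containment $I_{\Phi}(w(\mc{N})) \subseteq \Phi_J^+$, together with the elementary observation about $\alpha$-coefficients of roots $\geq \alpha$, makes this verification clean, and is precisely what allows the recursive structure of nested sets to match the recursive Definition \ref{def:separable}.
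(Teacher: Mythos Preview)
Your proof is correct and follows essentially the same approach as the paper: both split on whether $\Gamma \in \mc{N}$, invoke Lemma~\ref{lem:nested-without-maximal} when $\Gamma \notin \mc{N}$, and appeal to Corollary~\ref{cor:closure} when $\Gamma \in \mc{N}$. Your organization is slightly cleaner---by treating the case $\Gamma \notin \mc{N}$ first and then reducing $\Gamma \in \mc{N}$ to it, you avoid the paper's double induction on rank and on $|\mc{N}|$, and you verify the empty-pivot condition of (S3) explicitly rather than citing that parabolic inclusions preserve separability.
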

\begin{proof}
Suppose by induction that this is true for all Weyl groups of lower rank than $W$, the base case of type $A_1$ being trivial, and for all nested sets $\mc{N}'$ on $\Gamma$ of smaller cardinality than $\mc{N}$, the case of $\mc{N}'=\emptyset$ corresponding to the identity element, which is separable.  


If $\Gamma \in \mc{N}$, let $\mc{N}'=\mc{N}\setminus \{\Gamma\}$, is clearly a nested set.  By induction, we know that $w(\mc{N}')$ is separable, and since $\Gamma$ appears first in any monotonic order we have $w(\mc{N})=w_0(\Gamma)w(\mc{N}')$.  Since $w(\mc{N}')$ is separable by induction, we conclude $w(\mc{N})$ is separable by Corollary \ref{cor:closure}.

Otherwise $\Gamma \not \in \mc{N}$, so by Lemma \ref{lem:nested-without-maximal} there is some $x \in \Gamma$ not contained in any element of $\mc{N}$. Now note that $w(\mc{N})$ in fact lies in the parabolic subgroup $W_{\Gamma \setminus \{x\}}$ and $\mc{N}$ is a nested set on $\Gamma \setminus \{x\}$, thus $w(\mc{N})$ is a separable element of $W_{\Gamma \setminus \{x\}}$ by induction.  It is immediate from Definition \ref{def:separable} that the inclusion of parabolic subgroup into a Weyl group sends separable elements to separable elements, so $w(\mc{N})$ is separable as an element of $W$.
\end{proof}

\begin{proof}[Proof of Theorem \ref{thm:bijection-to-faces}]
To prove (1), we check that $w(\mc{N}_u)=u$ for $u$ separable.  The other direction, that $\mc{N}_{w(\mc{N}')}=\mc{N}'$ is similar.  Again, assume the desired statement has already been proven for Weyl groups of smaller rank than $W$.  If $u$ has an empty pivot $\alpha$, then by construction $\mc{N}_u$ does not contain $\Gamma$ and 
\[
\mc{N}_u=\bigsqcup_{i=1}^k \mc{N}_{u_{\Gamma_i}}
\]
where $\{\Gamma_i\}$ are the connected components of $\Gamma \setminus \{\alpha \}$.  As there are no edges between connected components (by definition) it is clear that $w_0(J)$ and $w_0(J')$ commute for $J \subseteq \Gamma_i$ and $J' \subseteq \Gamma_{i'}$ with $i \neq i'$.  Thus
\[
    w(\mc{N}_u)=\prod_{i=1}^k w(\mc{N}_{u_{\Gamma_i}})=\prod_{i=1}^k u_{\Gamma_i}=u.
\]
If $u$ has a full pivot $\alpha$, then 
\[
w(\mc{N}_u)=w(\{\Gamma\} \sqcup \mc{N}_{w_0u})=w_0w(\mc{N}_{w_0u})=w_0^2u=u.
\]
Where here we have used the first case and the fact that $w_0$ is an involution.  

The factor of $2^r$ in the second statement of (1) occurs because we may optionally remove any of the $r$ maximal elements of the nested sets indexing faces of $A(\Gamma)$ (see Proposition \ref{prop:faces-are-nested}).

Let $F_{\mc{N}}(q)$ denote the quotient on the right hand side of (2).  To prove (2), we will see that both $\Lambda^L_{w(\mc{N}}(q)$ and $F_{\mc{N}}(q)$ satisfy the same recurrence.  Indeed, if $\Gamma \not \in \mc{N}$ then there is some $\alpha$ not contained in any element of $\mc{N}$ and, by the construction of the inverse bijection, $\alpha$ is an empty pivot of $w(\mc{N})$.  Let $\{\Gamma_i\}$ be the connected components of $\Gamma \setminus \{\alpha\}$, and $\{\mc{N}_i\}$ the nested sets on these components given by restricting $\mc{N}$.  Then 
\begin{align*}
F_{\mc{N}}(q)&=\prod_i F_{\mc{N}_i}(q),  \\
\Lambda^L_{w(\mc{N})}(q)&=\prod_i \Lambda^L_{w(\mc{N})_{\Gamma_i}}(q) = \prod_i \Lambda^L_{w(\mc{N}_i)}(q).
\end{align*}
Otherwise $\Gamma \in \mc{N}$ and we clearly have 
\[
F_{\mc{N}}(q)=\frac{W(q)}{F_{\mc{N}\setminus \{\Gamma\}}(q)}=\frac{W(q)}{F_{\mc{N}_{w_0w(\mc{N})}}(q)}.
\]
On the other hand, by Theorem \ref{thm:symmetric-and-unimodal} we have 
\[
\Lambda^L_{w(\mc{N})}(q)=\frac{W(q)}{V^L_{w(\mc{N})}(q)}=\frac{W(q)}{\Lambda^L_{w_0w(\mc{N})}(q)}.
\]
The claim $\Lambda^L_w(q)=q^{\ell(w)}\Lambda^R_w(q^{-1})$ is true for any element $w$: for $a \leq_L w$, we can write $ba=w$ with lengths adding, but this means $b \leq_R w$ and $\ell(b)=\ell(w)-\ell(a)$.
\end{proof}


\section{Proof of Theorem \ref{thm:only-if}}
\label{sec:proof-of-only-if}

In Propositions \ref{prop:splitting-descends-to-parabolic} and \ref{prop:splitting-iff-flipped-is} we give several methods of producing more splittings from a given one; these will be useful in the proof of Theorem \ref{thm:only-if}.

\begin{prop} \label{prop:splitting-descends-to-parabolic}
Let $(X,Y)$ be a splitting of a Weyl group $W$, then:
\begin{enumerate}
    \item $X$ and $Y$ have unique maximal elements $x_0$ and $y_0$ under left and right weak order respectively.  Furthermore, we have $x_0y_0=w_0$.
    \item Let $J \subseteq \Delta$, then $(X \cap W_J, Y \cap W_J)$ is a splitting of $W_J$.
\end{enumerate}
\end{prop}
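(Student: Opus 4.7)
The plan is to prove (1) by identifying $x_0$ and $y_0$ via the unique splitting decomposition of $w_0$ and then invoking Proposition \ref{prop:W/U} to locate $X$ and $Y$ inside weak order intervals, and then to deduce (2) from the observation that length-additive factorizations of elements of $W_J$ necessarily remain in $W_J$.

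For (1), I would begin by noting that by bijectivity of the splitting map there are unique $x_0 \in X$ and $y_0 \in Y$ with $x_0 y_0 = w_0$, which already establishes the last equality in the statement. The key step is then showing $X \subseteq [e, x_0]_L$. For any $x \in X$, length-additivity of the splitting gives $\ell(x y_0) = \ell(x) + \ell(y_0)$, which is exactly the condition $x \in W/\{y_0\}$ from the definition of the generalized quotient. Applying Proposition \ref{prop:W/U} with $U = \{y_0\}$ (so that $u_0 = y_0$), we obtain $W/\{y_0\} = [e, w_0 y_0^{-1}]_L = [e, x_0]_L$, so $X \subseteq [e, x_0]_L$; since $x_0 \in X$, it is the unique maximum of $X$ under left weak order. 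A symmetric argument, applying length-additivity $\ell(x_0 y) = \ell(x_0) + \ell(y)$ for $y \in Y$ and the right-weak-order version of Proposition \ref{prop:W/U}, shows that $Y \subseteq [e, y_0]_R$ with $y_0$ as unique right-weak maximum.

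For (2), I would fix $w \in W_J$ and let $w = xy$ be its unique splitting decomposition, with $(x,y) \in X \times Y$ and lengths adding. The crucial observation is that any length-additive factorization of an element of $W_J$ must have both factors in $W_J$: any reduced expression for $w \in W_J$ uses only simple reflections indexed by $J$, and a length-additive factorization corresponds to splitting such a reduced expression into two pieces, both of which therefore lie in $W_J$. Hence $x, y \in W_J$, so the restricted multiplication $(X \cap W_J) \times (Y \cap W_J) \to W_J$ is surjective. Injectivity is inherited from the original splitting on $W$, and length-additivity is automatic, so we obtain a splitting of $W_J$.

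The main conceptual step is the one in (1): recognizing that length-additivity against the fixed factor $y_0$ is precisely membership in the generalized quotient $W/\{y_0\}$, which via Proposition \ref{prop:W/U} is a principal lower interval in left weak order. Once this is in hand, both parts of the proposition follow cleanly from the defining properties of splittings and well-known facts about parabolic subgroups.
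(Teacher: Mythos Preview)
Your proof is correct and follows essentially the same argument as the paper's own proof: for (1), both identify $x_0,y_0$ via the unique factorization of $w_0$, use length-additivity to place $X$ inside $W/\{y_0\}$, and invoke Proposition~\ref{prop:W/U} to identify this with $[e,x_0]_L$; for (2), both observe that a length-additive factorization of $w\in W_J$ yields a reduced expression for $w$ whose simple reflections all lie in $J$, forcing both factors into $W_J$.
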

\begin{proof}
By the definition of splitting, there exist unique elements $x_0 \in X$ and $y_0 \in Y$ such that $x_0y_0=w_0$.  Since all products $xy$ with $x \in X$ and $y \in Y$ are length-additive, we must in particular have $X \subseteq W/\{y_0\}$.  This generalized quotient, by Proposition \ref{prop:parabolic-quotient} is equal to $[e,w_0y_0^{-1}]_L=[e,x_0]_L$.  Similarly, by symmetry, we have $Y \subseteq [e,y_0]_R$.  This proves (1).

Let $X'=X \cap W_J$ and $Y'=Y \cap W_J$.  Since $W_J$ is a subgroup, all products from $X' \times Y'$ lie in $W_J$, and since the length function on parabolic subgroups agrees with that of the full group, these products are still length-additive.  It remains to check that the restricted map still surjects onto $W_J$.  Let $w \in W_J$ and let $(x,y) \in X \times Y$ be the unique pair with $xy=w$.  Let $x=s_{i_1} \cdots s_{i_k}$ and $y=s_{j_1} \cdots s_{j_{\ell}}$ be reduced expressions; by length-additivity $w=s_{i_1} \cdots s_{i_k} s_{j_1} \cdots s_{j_{\ell}}$ is a reduced expression.  Because $w \in W_J$, all of these simple reflections must in fact lie in $W_J$, and so $x,y \in W_J$.  Thus the restricted map is still surjective, and $(X',Y')$ is a splitting of $W_J$, proving (2).
\end{proof}

\begin{prop} \label{prop:splitting-iff-flipped-is}
Let $(X,Y)$ be a splitting of a Weyl group $W$, and let $x_0$ and $y_0$ denote the left- and right-maximal elements of $X,Y$ respectively, as given by Proposition \ref{prop:splitting-descends-to-parabolic}.  Define maps $\varphi:W \to W$ and $\psi:W \to W$ by 
\begin{align*}
    \varphi(u) &= ux_0^{-1} \\
    \psi(u) &= w_0y_0^{-1}uw_0=x_0uw_0.
\end{align*}
Then $(\varphi(X),\psi(Y))$ is also a splitting of $W$, having left- and right-maximal elements $x_0^{-1}$ and $x_0w_0=w_0y_0^{-1}w_0$ respectively.
\end{prop}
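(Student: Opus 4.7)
The plan is to verify that $(\varphi(X),\psi(Y))$ is a splitting by checking bijectivity and length-additivity of the induced multiplication map, and then to identify the claimed maximal elements. The pivotal reduction is the identity
\[
\varphi(x)\psi(y) \;=\; (xx_0^{-1})(x_0 y w_0) \;=\; xyw_0,
\]
which follows immediately from the definitions and shifts all the work back onto the known structure of $(X,Y)$.

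For bijectivity: since $(x,y)\mapsto xy$ is a bijection $X\times Y\to W$ by hypothesis and $u\mapsto uw_0$ is a bijection of $W$, the composition $(x,y)\mapsto xyw_0$ is a bijection $X\times Y\to W$. The maps $\varphi$ and $\psi$ are restrictions of global bijections of $W$ (respectively, right-multiplication by $x_0^{-1}$ and the map $u\mapsto x_0 u w_0$), so they give bijections $X\to\varphi(X)$ and $Y\to\psi(Y)$; hence the multiplication map $\varphi(X)\times\psi(Y)\to W$ is bijective. For length-additivity, Proposition \ref{prop:splitting-descends-to-parabolic}(1) gives $X\subseteq[e,x_0]_L$, so each $x\in X$ can be written $x_0=zx$ with lengths adding, yielding $\varphi(x)=z^{-1}$ and $\ell(\varphi(x))=\ell(x_0)-\ell(x)$. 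Using $\ell(uw_0)=\ell(w_0)-\ell(u)$ together with the length-additivity of the original splitting applied to the pair $(x_0,y)\in X\times Y$ gives $\ell(\psi(y))=\ell(w_0)-\ell(x_0)-\ell(y)$. Summing produces $\ell(w_0)-\ell(x)-\ell(y)=\ell(xyw_0)=\ell(\varphi(x)\psi(y))$, as required.

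For the maximal elements, observe that $e\in X$ and $e\in Y$, since $(e,e)$ is the unique preimage of $e$ under the original bijection; thus $\varphi(e)=x_0^{-1}\in\varphi(X)$ and $\psi(e)=x_0 w_0\in\psi(Y)$. Taking reduced expressions in the factorization $x_0=zx$ and inverting shows $x_0^{-1}=x^{-1}z^{-1}$ is again length-additive, hence $\varphi(x)=z^{-1}\leq_L x_0^{-1}$, giving $\varphi(X)\subseteq[e,x_0^{-1}]_L$ with $x_0^{-1}$ attained; a symmetric argument (or Proposition \ref{prop:splitting-descends-to-parabolic}(1) applied to the new splitting) identifies $x_0 w_0$ as the $\leq_R$-maximum of $\psi(Y)$. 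The alternative expression $x_0 w_0=w_0 y_0^{-1}w_0$ is the rearrangement $y_0^{-1}=w_0 x_0$ of $x_0 y_0=w_0$, conjugated by $w_0$. There is no real obstacle here; the only mildly technical step is the length formula for $\varphi(x)$, which relies on the interval containment $X\subseteq[e,x_0]_L$ supplied by the preceding proposition rather than merely on $x_0$ being the length-maximal element of $X$.
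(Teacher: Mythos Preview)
Your proof is correct and follows essentially the same route as the paper's: both hinge on the identity $\varphi(x)\psi(y)=xyw_0$ to reduce bijectivity to that of the original splitting, and both use the containment $X\subseteq[e,x_0]_L$ (writing $x_0=zx$ with lengths adding) to control $\varphi(X)$. The only cosmetic difference is that the paper verifies length-additivity by observing $\varphi(X)\subseteq[e,x_0^{-1}]_L=W/[e,x_0w_0]_R$ and $\psi(Y)\subseteq[e,x_0w_0]_R$ and invoking the definition of generalized quotient, whereas you compute $\ell(\varphi(x))$ and $\ell(\psi(y))$ explicitly and check the sum; both arguments use the same ingredients.
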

\begin{proof}
For $u \in [e,x_0]_L$ we can write $zu=x_0$ with lengths adding; by taking inverses we see that $\ph(u)=ux_0^{-1}=z^{-1}\in [e,x_0^{-1}]_L$, so $\ph$ restricts to a bijection $[e,x_0]_L \to [e,x_0^{-1}]_L$.  Similarly, $\psi$ restricts to a bijection $[e,y_0]_R \to [e,x_0w_0]_R$.  Since $\psi(Y) \subseteq [e,x_0w_0]_R$ and $\ph(X) \subseteq [e,x_0^{-1}]_L=W/[e,x_0w_0]_R$ we see that products of elements from $\ph(X)$ and $\psi(Y)$ are length-additive.  Given $x \in X$ and $y \in Y$ we have $\ph(x)\psi(y)=xx_0^{-1} \cdot x_0 y w_0=xyw_0$, so the bijectivity of the multiplication map $\ph(X) \times \psi(Y) \to W$ follows from that for $X \times Y \to W$.  Thus $(\ph(X),\psi(Y))$ is a splitting.
\end{proof}

The proof of Theorem \ref{thm:only-if} will proceed by studying potential minimal counterexamples.  We say $w \in W$ is a \emph{minimal non-separable} element if $w$ is not separable, but $w_J \in W_J$ is separable for all $J \subsetneq \Delta$.

We now restrict our attention to symmetric groups $W=S_n$. The following lemma describes the structure of minimal non-separable permutations.
\begin{lem}\label{lem:minimal-not-sep-structure}
Let $w\in S_n$ be minimal non-separable. If $w(1)<w(n)$, then $w(i)>w(n)$ for $2\leq i\leq n+1-w(n)$; $w(1)<w(i)<w(n)$ for $n+2-w(n)\leq i\leq n-w(1)$; and $w(i)<w(1)$ for $n-w(1)+1\leq i\leq n-1$. Likewise, if $w(1)>w(n)$, then $w(i)<w(n)$ for $2\leq i\leq w(n)$; $w(n)<w(i)<w(1)$ for $w(n)+1\leq i\leq w(1)-1$; and $w(i)>w(1)$ for $w(1)\leq i\leq n-1$. 
\end{lem}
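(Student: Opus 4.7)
The plan is to prove the Case A statement ($w(1) < w(n)$) in detail and then deduce Case B by applying the involution $w \mapsto ww_0$, which preserves minimal non-separability (by Corollary \ref{cor:closure} together with the fact that consecutive sub-ranges of $ww_0$ are reverse-patterns of those of $w$) while swapping the two cases. Fix $a = w(1)$ and $b = w(n)$, and let $B_+ = \{b+1,\ldots,n\}$, $B_0 = \{a+1,\ldots,b-1\}$, $B_- = \{1,\ldots,a-1\}$. The fundamental observation is that since $w_J$ is separable for every $J \subsetneq \Delta$, every proper consecutive sub-range $(w(\alpha),\ldots,w(\gamma))$ with $(\alpha,\gamma)\neq(1,n)$ avoids both $2413$ and $3142$ as patterns; hence any such pattern in $w$ itself must occupy positions $(1,i_2,i_3,n)$. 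In Case A, the inequality $w(1)<w(n)$ forbids a $3142$ at these positions, so $w$ contains a $2413$ at $(1,i_2,i_3,n)$ with $w(i_2)\in B_+$ and $w(i_3)\in B_-$. In particular $B_\pm$ are nonempty, forcing $2 \leq a < b \leq n-1$.

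The crucial first step is to show $w(2) \in B_+$; the parallel statement $w(n-1) \in B_-$ then follows by applying the same argument to $w_0 w w_0$, which is again minimal non-separable and in Case A. Suppose to the contrary that $w(2) \in B_0 \cup B_-$. If $w(2) \in B_0$, the 4-tuple at positions $(2,i_2,i_3,n)$ has value ranks $(2,4,1,3)$, a $2413$ inside the proper sub-range $(w(2),\ldots,w(n))$. If $w(2) \in B_-$, the argument splits on whether $w(2)>w(i_3)$ or $w(2)<w(i_3)$: the former again gives a $2413$ at $(2,i_2,i_3,n)$, while the latter gives a $3142$ at $(1,2,i_2,i_3)$ with ranks $(3,1,4,2)$ inside $(w(1),\ldots,w(n-1))$. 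Each case contradicts separability of a proper sub-range. The appeal to a $3142$ pattern here is striking — $w$ itself in Case A contains no $3142$ — and the pattern emerges only because position $2$ is inserted together with $(i_2,i_3)$.

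With $w(2) \in B_+$ and $w(n-1) \in B_-$ in hand, the block structure follows from three ordering claims, each proved by a single 4-tuple argument: (a) no $B_-$-position precedes a $B_+$-position; (b) no $B_0$-position precedes a $B_+$-position; (c) no $B_0$-position follows a $B_-$-position. For (b), given $p<q$ with $w(p)\in B_0$, $w(q)\in B_+$, the tuple $(p,q,n-1,n)$ has ranks $(2,4,1,3)$, a $2413$ inside $(w(2),\ldots,w(n))$. For (c), given $p<q$ with $w(p)\in B_-$, $w(q)\in B_0$, the tuple $(1,2,p,q)$ has ranks $(2,4,1,3)$, a $2413$ inside $(w(1),\ldots,w(n-1))$. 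For (a), given $p<q$ with $w(p)\in B_-$ and $w(q)\in B_+$, either $w(2)>w(q)$ — in which case $(1,2,p,q)$ gives a $2413$ inside $(w(1),\ldots,w(n-1))$ — or $w(2)<w(q)$ — in which case $(2,p,q,n)$ gives a $3142$ with ranks $(3,1,4,2)$ inside $(w(2),\ldots,w(n))$. Combined with the cardinalities $|B_+|=n-b$, $|B_0|=b-a-1$, $|B_-|=a-1$, these three claims force the interior positions $\{2,\ldots,n-1\}$ to be partitioned into exactly the three consecutive intervals prescribed by the lemma.

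The main obstacle is designing 4-tuples whose restriction lies inside a \emph{proper} consecutive sub-range (where forbidden patterns really are forbidden), rather than at positions $(1,?,?,n)$ where forbidden patterns can occur (since $w$ is not separable). The resolution is to use positions $2$ and $n-1$ as proxies for the ``middle'' positions of a $2413$ pattern, which is only possible after first establishing that these two boundary positions carry $B_+$ and $B_-$ values. The repeated need to invoke the pattern $3142$, even in Case A where $w$ itself avoids it, reflects the fact that separability of proper sub-ranges gives the full strength of $\{2413,3142\}$-avoidance locally, and the argument must exploit both forbidden patterns.
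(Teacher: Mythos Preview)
Your proof is correct and follows essentially the same strategy as the paper: first establish that $w(2)$ lies above $w(n)$ and $w(n-1)$ lies below $w(1)$ (the paper phrases this as ``$w(1),w(2),w(n-1),w(n)$ form a 2413'' via a minimal-index argument, you do it by direct case analysis plus the $w_0ww_0$ symmetry), then use these two anchored positions to force the three-block structure by producing forbidden patterns inside proper sub-ranges. Your partition $B_+,B_0,B_-$ is slightly coarser than the paper's five-part partition based additionally on $w(2)$ and $w(n-1)$, but this suffices and arguably streamlines the case analysis; your explicit invocation of the involutions $ww_0$ and $w_0ww_0$ to transfer between cases is cleaner than the paper's ``analogously''.
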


The structure of minimal non-separable permutations described in Lemma~\ref{lem:minimal-not-sep-structure} can be viewed in Figure~\ref{fig:min-non-sep}.

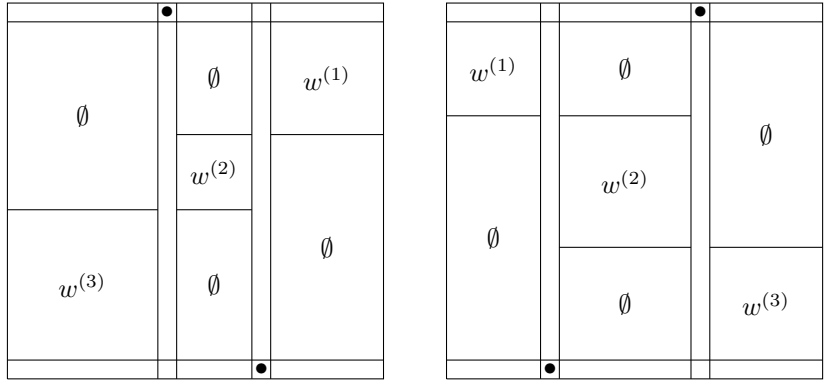
\begin{figure}[h!]
\centering
\begin{tikzpicture}[scale=0.25]
\draw(0,0)--(0,20)--(20,20)--(20,0)--(0,0);
\draw(0,1)--(20,1);
\draw(0,19)--(20,19);
\draw(8,0)--(8,20);
\draw(9,0)--(9,20);
\draw(13,0)--(13,20);
\draw(14,0)--(14,20);
\node at (8.5,19.5) {$\bullet$};
\node at (13.5,0.5) {$\bullet$};
\draw(0,9)--(8,9);
\draw(9,9)--(13,9);
\draw(9,13)--(13,13);
\draw(14,13)--(20,13);
\node at (4,14) {$\emptyset$};
\node at (4,5) {$w^{(3)}$};
\node at (11,5) {$\emptyset$};
\node at (11,11) {$w^{(2)}$};
\node at (11,16) {$\emptyset$};
\node at (17,7) {$\emptyset$};
\node at (17,16) {$w^{(1)}$};
\end{tikzpicture}
\qquad
\begin{tikzpicture}[scale=0.25]
\draw(0,0)--(0,20)--(20,20)--(20,0)--(0,0);
\draw(0,1)--(20,1);
\draw(0,19)--(20,19);
\draw(5,0)--(5,20);
\draw(6,0)--(6,20);
\draw(13,0)--(13,20);
\draw(14,0)--(14,20);
\node at (5.5,0.5) {$\bullet$};
\node at (13.5,19.5) {$\bullet$};
\draw(0,14)--(5,14);
\draw(6,14)--(13,14);
\draw(6,7)--(13,7);
\draw(14,7)--(20,7);
\node at (2.5,7.5) {$\emptyset$};
\node at (2.5,16.5) {$w^{(1)}$};
\node at (9.5,4) {$\emptyset$};
\node at (9.5,10.5) {$w^{(2)}$};
\node at (9.5,16.5) {$\emptyset$};
\node at (17,4) {$w^{(3)}$};
\node at (17,13) {$\emptyset$};
\end{tikzpicture}
\caption{Structure of minimal non-separable permutations (viewed as the permutation matrix with $(i,w(i))$ labeled for $i=1,\ldots,n$): left for $w(1)<w(n)$ and right for $w(1)>w(n)$.}
\label{fig:min-non-sep}
\end{figure}

\begin{proof}[Proof of Lemma~\ref{lem:minimal-not-sep-structure}]
Let $w\in S_n$ be a minimal non-separable permutation. By definition, $w(1),\ldots,w(n-1)$ is separable and $w(2),\ldots,w(n)$ is also separable. Thus any occurrence in $w$ of either forbidden pattern 3142 or 2413 must use $w(1)$ and $w(n)$. Depending on whether $w(1)<w(n)$ or $w(1)>w(n)$, exactly one of the patterns 3142 and 2413 can appear in $w$. Without loss of generality, let us assume that $w(1)<w(n)$ so that $w$ contains 2413 and $w$ does not contain 3142. The case $w(1)>w(n)$ can be obtained by reversing the permutation.

We first notice that $w(1),w(2),w(n{-}1),w(n)$ form a pattern 2413. To see this, find indices $i<j$ such that $w(1),w(i),w(j),w(n)$ form 2413, with smallest possible $k:=(i-1)+(n-j)$. If $i\neq2$, consider the value $w(2)$. If $w(2)>w(n)$, then we can replace $i$ by 2, decreasing $k$. If $w(j)<w(2)<w(n)$, then $w$ contains a pattern 2413 at indices $2,i,j,n$, contradicting $w$ being minimal non-separable. If $w(2)<w(j)$, then $w$ contains a pattern 3142 at indices $1,2,i,j$, contradicting $w$ being minimal non-separable. As a result, $i=2$, and analogously, $j=n-1$.

Next, we partition $\{3,4,\ldots,n-2\}$ into sets as follows:
\begin{itemize}
    \item $A_1'=\{i\ |\ w(i)<w(n-1)\}$, $A_1''=\{i\ |\ w(n-1)<w(i)<w(1)\}$,
    \item $A_2=\{i\ |\ w(1)<w(i)<w(n)\}$,
    \item $A_3''=\{i\ |\ w(n)<w(i)<w(2)\}$, $A_3'=\{i\ |\ w(i)>w(2)\}$.
\end{itemize}
These sets are depicted in Figure~\ref{fig:min-non-sep-pf}. Also, let $A_1=A_1'\cup A_1''\cup\{n-1\}$ and $A_3=A_3'\cup A_3''\cup\{2\}$. We show that if $a\in A_i$ and $b\in A_j$ with $i<j$, then $a>b$. For the sake of contradiction, assume that $a\in A_i$, $b\in A_j$, $i<j$ and $a<b$. Since either $i=1$ or $j=3$, by symmetry, let us assume that $i=1$. If $a\in A_1'$, then $w$ contains a pattern 3142 at indices $1,a,b,n-1$, contradicting $w$ being minimal non-separable. If $a\in A_1''$ and $b\in A_2\cup A_3''$, then $w$ contains a pattern 2413 at indices $1,2,a,b$ and if $a\in A_1''$ and $b\in A_3'$, then $w$ contains a pattern 3142 at indices $2,a,b,n$, contradicting $w$ being minimal non-separable. And if $a=n-1$, we simply cannot have $a<b$. By definition, $A_1=\{i\ |\ w(i)<w(1)\}$ but the above argument also shows that $A_1$ contains the largest indices in $\{2,3,\ldots,n-1\}$. Thus, $A_1=\{n-w(1)+1,\ldots,n-1\}$. Similarly, we deduce that $A_3=\{2,\ldots,n+1-w(n)\}$ and $A_2=\{n+2-w(n),\ldots,n-w(1)\}$. This is exactly what we want.
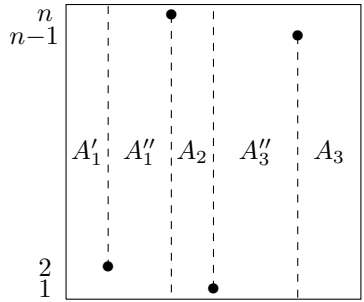
\begin{figure}[h!]
\centering
\begin{tikzpicture}[scale=0.28]
\draw(0,0)--(0,14)--(14,14)--(14,0)--(0,0);
\node at (2,1.5) {$\bullet$};
\node at (5,13.5) {$\bullet$};
\node at (7,0.5) {$\bullet$};
\node at (11,12.5) {$\bullet$};
\draw[dashed](2,1.5)--(2,14);
\draw[dashed](5,13.5)--(5,0);
\draw[dashed](7,0.5)--(7,14);
\draw[dashed](11,12.5)--(11,0);
\node at (1,7) {$A_1'$};
\node at (3.5,7) {$A_1''$};
\node at (6,7) {$A_2$};
\node at (9,7) {$A_3''$};
\node at (12.5,7) {$A_3$};
\node at (-1,0.5) {$1$};
\node at (-1,1.5) {2};
\node at (-1.5,12.5) {$n{-}1$};
\node at (-1,13.5) {$n$};
\end{tikzpicture}
\caption{A partition of permutation entries for minimal non-separable permutations.}
\label{fig:min-non-sep-pf}
\end{figure}
\end{proof}

The following lemma is the main technical lemma of this section.

\begin{lem} \label{lem:minimal-not-symmetric}
Let $w\in S_n$ be minimal non-separable and $k=|w(1)-w(n)|-1$. Let $f=[e,w]_L(q)$ be the rank generating function of the weak interval below $w$, a polynomial in $q$ of degree $\ell(w)$. Then $[q^d]f=[q^{\ell(w)-d}]f$ for $d\leq k$, but $[q^{k+1}]f\neq[q^{\ell(w)-k-1}]f$. In particular, $f=[e,w]_L(q)$ is not palindromic.
\end{lem}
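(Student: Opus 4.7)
By Corollary~\ref{cor:closure}, we may assume without loss of generality that $w(1) < w(n)$; the opposite case follows by applying the involution $w \mapsto w^{-1}$, which preserves minimal non-separability, lengths, and rank generating functions of weak order intervals via the poset isomorphism $[e,w]_L \cong [e,w^{-1}]_R$. Set $a = w(1)$, $b = w(n)$, so $k = b - a - 1$, and adopt the block decomposition of Lemma~\ref{lem:minimal-not-sep-structure} with value blocks $A_1 = \{1, \ldots, a-1\}$, $A_2 = \{a+1, \ldots, b-1\}$ (of size $k$), and $A_3 = \{b+1, \ldots, n\}$. By the minimality of $w$, each sub-permutation $w|_{A_i}$ is separable, so by Theorem~\ref{thm:symmetric-and-unimodal} each $[e, w|_{A_i}]_L(q)$ is palindromic.

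The plan for the palindromicity claim at rank $d \leq k$ is to construct a rank-preserving bijection between size-$d$ biconvex subsets of $I(w)$ and size-$(\ell(w)-d)$ biconvex subsets, exploiting the block structure. A biconvex subset $S \subseteq I(w)$ decomposes into internal pieces $S_i \subseteq I(w|_{A_i})$ within each block (matched palindromically by Theorem~\ref{thm:symmetric-and-unimodal}) together with a boundary piece $B$ of inversions spanning multiple blocks or involving the entries at positions $1$ and $n$. The crucial observation is that for $d \leq k$, the size $|B|$ is too small to include all $k+2$ specific boundary simple inversions (the $k$ simple inversions within $A_2$'s position range together with the two boundary descents at positions $n+1-b$ and $n-a$) that would otherwise activate the 2413-pattern defect, so the per-block palindromic matchings compose to a global bijection. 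The rank-$1$ case illustrates the mechanism: the atom count equals $\mathrm{des}(w) = \sum_i \mathrm{des}(w|_{A_i}) + 1 + [k \geq 1]$ while the coatom count equals $\mathrm{des}(w^{-1}) = \sum_i \mathrm{des}((w|_{A_i})^{-1}) + 2$, and $\mathrm{des}(w|_{A_i}) = \mathrm{des}((w|_{A_i})^{-1})$ follows from palindromicity of $[e, w|_{A_i}]_L(q)$; the counts therefore agree iff $k \geq 1$.

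For the strict inequality at $d = k+1$, we plan to exhibit an explicit ``extra'' corank-$(k+1)$ element. The element $u^* \leq_L w$ obtained by removing from $I(w)$ the biconvex subset consisting of the $k$ simple inversions within $A_2$'s position range together with one of the two boundary descents is itself biconvex (verified directly using the block structure) and yields a corank-$(k+1)$ element with no counterpart among rank-$(k+1)$ elements under the bijections from the previous step. This is exhibited concretely by the examples $w = 2413$ (with $k = 0$, where the corank-$1$ count exceeds the rank-$1$ count by $1$) and $w = 25314$ (with $k = 1$, where corank-$2$ exceeds rank-$2$ by $1$). The main obstacle is making the bijection in the palindromicity step precise for arbitrary $d \leq k$: the rank-$1$ case is clean as sketched, but general $d$ requires a detailed combinatorial analysis tracking biconvexity through the block decomposition and matching boundary contributions in a compatible way.
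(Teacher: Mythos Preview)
Your proposal has a genuine gap at its very first step, and is also explicitly incomplete in its main argument.

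\textbf{The WLOG reduction fails.} You claim the case $w(1)>w(n)$ reduces to $w(1)<w(n)$ via $w\mapsto w^{-1}$. But $w^{-1}(1)$ and $w^{-1}(n)$ are the \emph{positions} of the values $1$ and $n$ in $w$; these bear no systematic relation to the sign of $w(1)-w(n)$, and in particular $|w^{-1}(1)-w^{-1}(n)|$ need not equal $|w(1)-w(n)|$, so the parameter $k$ is not preserved. Moreover, the poset isomorphism you cite gives $[e,w]_L\cong[e,w^{-1}]_R$, whereas the lemma concerns $[e,\cdot]_L$; you would need $[e,w^{-1}]_L$, which is a different interval. The paper in fact treats the two cases separately and remarks that they are ``significantly different,'' with the $w(1)<w(n)$ case requiring a nontrivial $q$-multinomial identity.

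\textbf{The main argument is only a plan.} You acknowledge that ``the main obstacle is making the bijection in the palindromicity step precise for arbitrary $d\leq k$,'' and indeed nothing beyond the rank-$1$ case is actually proved. The vague decomposition into ``internal'' and ``boundary'' biconvex pieces is not shown to behave well: biconvexity is a global condition, and there is no reason a priori that it factors through your block decomposition in a way that lets you apply the per-block palindromicity of Theorem~\ref{thm:symmetric-and-unimodal} independently. The paper avoids this by parametrizing $u\in[e,w]_L$ concretely by the data $(u(1),u(n),u^{(1)},u^{(2)},u^{(3)})$, computing $\ell(u)$ as an explicit function of these, and summing; the resulting expressions for $[q^d]f$ and $[q^{\ell(w)-d}]f$ are then compared directly. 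In the $w(1)<w(n)$ case this comparison hinges on the congruence $\qbinom{s+t+k}{s,t,k}_q\equiv\qbinom{s+k}{k}_q\qbinom{t+k}{k}_q\pmod{q^{k+1}}$, an ingredient with no analogue in your sketch.
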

\begin{proof}
Visualization from Figure~\ref{fig:min-non-sep} is very helpful for the proof. Unlike in Lemma~\ref{lem:minimal-not-sep-structure}, the two cases $w(1)<w(n)$ and $w(n)>1$ are significantly different here.
Recall that, by Proposition \ref{prop:weak-given-by-inversions}, $u\leq_L \pi$ if and only if for all $i<j$ such $\pi(i)<\pi(j)$, we have $u(i)<u(j)$.

First, assume $w(1)>w(n)$ (see the right side of Figure~\ref{fig:min-non-sep}). Let $x=w(n)-1$ and $y=n-w(1)$. Then $x\geq1$, $y\geq1$, $k\geq0$ and $x+y+k=n-2$. From Lemma~\ref{lem:minimal-not-sep-structure}, we know $w$ maps $A_1:=\{2,\ldots,x+1\}$ to $\{1,\ldots,x\}$, $A_2:=\{x+2,\ldots,x+k+1\}$ to $\{x+2,\ldots,x+k+1\}$ and $A_3:=\{n-y,\ldots,n-1\}$ to $\{n-y+1,\ldots,n\}$. We view $w$ as three permutations $w^{(1)}\in S_x$, $w^{(2)}\in S_k$ and $w^{(3)}\in S_y$, by keeping the relative orderings of the indices. Since $w$ is minimal non-separable, $w^{(1)}$, $w^{(2)}$ and $w^{(3)}$ are separable. Let us describe a general element $u\in[e,w]_L$. For any permutation $u\in S_n$, let $u^{(1)}\in S_x$, $u^{(2)}\in S_k$ and $u^{(3)}\in S_y$ be obtained by restricting $u$ to indices $A_1$, $A_2$ and $A_3$ respectively, while keeping the relative orderings of the indices. Then we see that $u\in [e,w]_L$ if and only if $u(1)\leq w(1)$, $u(n)\leq w(n)$, $u(a)<u(b)$ if $a\in A_i$ and $b\in A_j$ with $i<j$, and $u^{(i)}\leq_L w^{(i)}$ for $i=1,2,3$. For simplicity, let $f_i:=[e,w^{(i)}]_L(q)$ be the rank generating function of the weak interval below $w^{(i)}$, for $i=1,2,3$. Since $w^{(i)}$ is separable, $f_i$ is palindromic by Theorem~\ref{thm:symmetric-and-unimodal}.

We calculate that $\ell(w)=\ell(w^{(1)})+\ell(w^{(2)})+\ell(w^{(3)})+x+2k+y+1$. For $d\leq k+1$, and $u\in[e,w]_L$ with $\ell(u)=\ell(w)-d$, let $s=w(1)-u(1)\geq0$ and $t=u(n)-w(n)\geq0$. For $d\leq k$, we necessarily have $s+t\leq k$. When $s,t$ with $s+t\leq k$ are fixed, the entries $u(1)$ and $u(n)$ have already contributed $(x+k-s)+(y+k-t)+1$ to $\ell(u)$. So when $d\leq k$, we have
\[
[q^{\ell(w)-d}]f=\sum_{\substack{s,t\geq0\\s+t\leq k}}[q^{\ell(w^{(1)})+\ell(w^{(2)})+\ell(w^{(3)})-d+s+t}]f_1f_2f_3=\sum_{\substack{s,t\geq0\\s+t\leq k}}[q^{d-s-t}]f_1f_2f_3,
\]
where the last step follows because $f_1f_2f_3$ is palindromic. To compute $[q^d]f$ when $d\leq k+1$, we sum over $s\geq0$ and $t\geq0$ with $v(1)=s+1$ and $v(n)=n-t$. As $x,y\geq1$, $n\geq k+4$ and $v(n)>v(1)$. The entries $v(1)$ and $v(n)$ have contributed $s+t$ to $\ell(v)$, so we have
$$[q^{d}]f=\sum_{s,t\geq0}[q^{d-s-t}]f_1f_2f_3.$$
This means $[q^{\ell(w)-d}]f=[q^d]f$ for $d\leq k$. When $k=d+1$, the calculation for $[q^{\ell(w)-d}]f$ remains the same when $s+t\leq k$ (that is, when $u(1)>u(n)$). But now it is possible that $u(1)<u(n)$, $u\in[e,w]_L$ and $\ell(u)=\ell(w)-k-1$. Such $u$'s must satisfy $u^{(i)}=w^{(i)}$ for $i=1,2,3$, $u(n)-u(1)=1$, and $x+1\leq u(1)\leq x+k+1$. There are precisely $k+1$ possibilities. Thus,
$$[q^{\ell(w)-k-1}]f=\sum_{\substack{s,t\geq0\\s+t\leq k}}[q^{k+1-s-t}]f_1f_2f_3+k+1.$$
The calculation for $[q^{d}]f$ stays the same so
\begin{align*}
[q^{k+1}]f=&\sum_{s,t\geq0}[q^{k+1-s-t}]f_1f_2f_3=\sum_{\substack{s,t\geq0\\s+t\leq k}}[q^{k+1-s-t}]f_1f_2f_3+\sum_{\substack{s,t\geq0\\s+t=k+1}}1\\
=&\sum_{\substack{s,t\geq0\\s+t\leq k}}[q^{k+1-s-t}]f_1f_2f_3+k+2\\
=&[q^{\ell(w)-k-1}]f+1.
\end{align*}

Next, assume $w(1)<w(n)$ (see the left side of Figure~\ref{fig:min-non-sep}). Let $x=n-x(1)\geq1$ and $y=x(n)-1\geq1$. From Lemma~\ref{lem:minimal-not-sep-structure}, we know that $w$ maps $A_1:=\{2,\ldots,x+1\}$ to $\{n-x+1,\ldots,n\}$, $A_2:=\{x+2,\ldots,x+k+1\}$ to $\{n-x-k,\ldots,n-x-1\}$ and $A_3:=\{n-y,\ldots,n-1\}$ to $\{1,\ldots,y\}$. As before, for $u\in S_n$, let $u^{(1)}\in S_x$, $u^{(2)}\in S_k$, $u^{(3)}\in S_y$ be obtained by restricting $u$ to $A_1$, $A_2$ and $A_3$ respectively. In this case, $u\leq_L w$ if and only if $u(1)<u(a)$ for $a\in A_1\cup A_2\cup\{n\}$, $u(n)>u(b)$ for $b\in \{1\}\cup A_2\cup A_3$, and $u^{(i)}\leq w^{(i)}$ for $i=1,2,3$. We are going to compute $f$ explicitly. 

Notice that $\ell(w)=xy+(k+1)(x+y)+\ell(w^{(1)})+\ell(w^{(2)})+\ell(w^{(3)})$. We group together those $u\in[e,w]_L$ with the same values at $u(1)=w(1)-s$ and $u(n)=w(n)+t$, for $0\leq s\leq y$ and $0\leq t\leq x$. Now that $s$ and $t$ are fixed, we must have $\{i\ |\ u(i)>u(n)\}\subset A_1$ and $\{i\ |\ u(i)<u(1)\}\subset A_3$. We still need to assign $s$, $k$, and $t$ values from $\{u(1)+1,\ldots,u(n)-1\}$ to $u(A_1)$, $u(A_2)$ and $u(A_3)$ respectively. After we specify the sets of values $u(A_1)$, $u(A_2)$ and $u(A_3)$, we just need to make sure that each $u^{(i)}\leq_L w^{(i)}$. We enumerate inversions $(i,j)$ of $u$ by grouping them as follows: $i=1$ contributes $y-s$; $j=n$ contributes $x-t$; $i\in\{i\ |\ u(i)>u(n)\}$ contributes $(x-t)(y+k)$; $j\in\{j\ |\ u(j)<u(1)\}$ contributes an additional $(y-s)(k+t)$; the inversions within each $u^{(i)}$ contribute $\ell(u^{(i)})$ for $i=1,2,3$; and some additional inversions from assigning $\{u(1)+1,\ldots,u(n)-1\}$ to $u(A_1)$, $u(A_2)$ and $u(A_3)$.
Putting these together, we obtain
\begin{align*}
f=&\sum_{s=0}^y\sum_{t=0}^xq^{(x-t)(y+k+1)+(y-s)(k+t+1)}\qbinom{s+t+k}{s,t,k}_qf_1f_2f_3\\
=&\sum_{s=0}^y\sum_{t=0}^xq^{(k+1)(x+y-t-s)+xy-ts}\qbinom{s+t+k}{s,t,k}_qf_1f_2f_3.
\end{align*}
Let $f_{st}$ be the summand above such that $f=\sum_{s=0}^y\sum_{t=0}^x f_{st}.$ Let $d\leq k+1$. As $x,y\geq1$, as long as $s<y$ or $t<x$, we have $(k+1)(x+y-t-s)+xy-ts>d$ so $[q^d]f_{st}=0$. This means
$$[q^d]f=[q^d]\qbinom{x+y+k}{x,y,k}_qf_1f_2f_3,\quad d\leq k+1.$$

On the other hand, let us note that $\qbinom{s+t+k}{s,t,k}_q$ is palindromic of degree $st+tk+sk$. By symmetry of the $q$-multinomial coefficient and $f_1,f_2,f_3$, we have
\begin{align*}
[q^{\ell(w)-d}]f=&\sum_{s=0}^y\sum_{t=0}^x[q^{\ell(w)-d}]f_{st}\\
=&\sum_{s=0}^y\sum_{t=0}^x[q^{st+tk+sk+\ell(w^{(1)})+\ell(w^{(2)})+\ell(w^{(3)})+s+t-d}]\qbinom{s+t+k}{s,t,k}_qf_1f_2f_3\\
=&\sum_{s=0}^y\sum_{t=0}^x[q^{d-s-t}]\qbinom{s+t+k}{s,t,k}_qf_1f_2f_3\\
=&[q^d]\sum_{s=0}^y\sum_{t=0}^xq^{s+t}\qbinom{s+t+k}{s,t,k}_qf_1f_2f_3.
\end{align*}

For the sake of computation, let us show the following equality combinatorially:
\begin{equation} \label{eq:multinomial-congruence}
\qbinom{s+t+k}{s,t,k}_q\equiv \qbinom{s+k}{k}_q\qbinom{t+k}{k}_q\bmod{q^{k+1}}.
\end{equation}
Recall that $\qbinom{s+t+k}{s,t,k}_q$ is the generating function of arrangements of $s$ 1's, $k$ 2's and $t$ 3's graded by the number of inversions. The left hand side of the above equation counts the number of such arrangement with at most $k$ inversions, graded by the number of inversions. We see that if a 3 appears before any 1's, then the sequence has at least $k+1$ inversions. Since we are working modulo $q^{k+1}$, these terms can be ignored. The right hand side counts $(\sigma,\tau)$, while $\sigma$ is an arrangement of $s$ 1's and $k$ 2's and $\tau$ is an arrangement of $k$ 2's and $t$ 3's such that their numbers of inversions sum up to at most $k$. If the right most 1 in $\sigma$ has $a$ 2's on its right and the left most 3 in $\tau$ has $b$ 2's on its left, we necessarily have $a+b\leq k$, the total number of 2's. Therefore, we can merge together $\sigma$ and $\tau$ by identifying their 2's unambiguously. Conversely, for any arrangement of $s$ 1's, $k$ 2's and $t$ 3's with at most $k$ inversions, we can form $\sigma$ by ignoring the 3's and $\tau$ by ignoring the 1's. Since there are no inversions between 3's and 1's, the length is also preserved. Thus, a bijection is provided to justify the above equation.

As a result, for $s,t\geq0$, we have
\[
q^{s+t}\qbinom{s+t+k}{s,t,k}_q\equiv q^{s+t}\qbinom{s+k}{k}_q\qbinom{t+k}{k}_q\bmod{q^{k+2}},
\]
which follows from (\ref{eq:multinomial-congruence}) when $s+t\geq1$, and is trivially true when $s=t=0$. Let us continue the computation as follows:
\begin{align*}
&\sum_{s=0}^y\sum_{t=0}^xq^{s+t}\qbinom{s+t+k}{s,t,k}_qf_1f_2f_3\\
\equiv&\sum_{s=0}^y\sum_{t=0}^xq^{s+t}\qbinom{s+k}{k}_q\qbinom{t+k}{k}_qf_1f_2f_3\\
\equiv&\left(\sum_{s=0}^yq^s\qbinom{s+k}{k}_q\right)\left(\sum_{t=0}^xq^t\qbinom{t+k}{k}_q\right)f_1f_2f_3\\
\equiv&\qbinom{y+k+1}{k+1}_q\qbinom{x+k+1}{k+1}_qf_1f_2f_3\\
\equiv&\qbinom{x+y+k+1}{k+1}_qf_1f_2f_3\\
\equiv&\qbinom{x+y+k}{x,y,k}_qf_1f_2f_3+q^{k+1}\qbinom{x+y+k}{k+1}_q\qbinom{x+y}{x}_qf_1f_2f_3 \quad\bmod{q^{k+2}}.
\end{align*}

By our previous computed values of $[q^d]f$ and $[q^{\ell(w)-d}]f$, we see that the quantity $[q^{\ell(w)-d}]f-[q^d]f$ is 0 when $d\leq k$, and is 1 when $d=k+1$. 
\end{proof}

We are now ready to prove Theorem~\ref{thm:only-if}.

\begin{proof}[Proof of Theorem \ref{thm:only-if}]
Let $W=S_n$ and suppose by induction that we have proved the theorem for smaller symmetric groups, and thus, since splittings respect taking products, for all parabolic subgroups of $W$, the base case being easy to check directly.  This means that any splitting $(X',Y')$ of $W_J$ for $J \subsetneq \Delta$ is of the form $X'=[e,w_0(J)u'^{-1}]_L$ and $Y=[e,u']_R$ with $u'$ separable.  

Suppose we have a splitting $(X,Y)$ of $W$ which is not of this form.  Let $x_0 \in X$ and $y_0 \in Y$ be as in Proposition \ref{prop:splitting-descends-to-parabolic}, so that $X \subseteq [e,x_0]_L$ and $Y \subseteq [e, y_0]_R$ with $x_0y_0=w_0$.  If $x_0$ (and therefore $y_0$) is separable, then we have a splitting $([e,x_0]_L, [e, y_0]_R)$ by Theorem \ref{thm:separable-splitting}, so no proper subsets of these intervals could give a splitting.  Thus $x_0$ and $y_0$ are not separable; we wish to conclude that they are minimal non-separable elements.  To see this, for each $J \subsetneq \Delta$, consider the splitting $(X \cap W_J, Y \cap W_J)$ of $W_J$.  By the inductive hypothesis, we know that $X \cap W_J=[e,x_0']_L$ and $Y \cap W_J=[e,y_0']_R$ for some separable elements $x_0', y_0'$ with $x_0'y_0'=w_0(J)$ and $\ell(x_0')+\ell(y_0')=\ell(w_0(J))$.  Now, since $x_0$ and $y_0$ are the unique maximal elements in $X,Y$ under left and right weak order respectively by Proposition \ref{prop:splitting-descends-to-parabolic}, we have $x_0' \leq_L x_0 \land_L w_0(J)$ and $y_0' \leq_R y_0 \land_R w_0(J)$.  Since $x_0 \land_L w_0(J) \in W/\{y_0\}=W/[e,y_0]_R$ and $y_0 \land_R w_0(J) \in [e,y_0]_R$, we know that the product of these two elements is length-additive, and also clearly lies in $W_J$.  Since the length of this product is at least the length of $x_0'y_0'=w_0(J)$, and since $w_0(J)$ is the longest element of $W_J$, we conclude that $x_0'=x_0 \land_L w_0(J)=x_{0,J}$ and $y_0'=y_0 \land_R w_0(J)$; thus $x_{0,J}$ is separable for all $J \subsetneq \Delta$ and so $x_0$ is a minimal non-separable element. 

By Proposition \ref{prop:product-of-degrees}, we know $W(q)$ is the product of the $q$-integers $[d_i]_q=1+q+\cdots +q^{d_i-1}$ over the degrees $d_i$ of $W$, and is thus a product of cyclotomic polynomials (which are irreducible over $\mathbb{Q}$).  The splitting $(X,Y)$ gives a factorization $W(q)=X(q)Y(q)$ where $X(q)=\sum_{x \in X}q^{\ell(x)}$ and similarly for $Y(q)$, and so $X(q)$ is also a product of cyclotomic polynomials.  Since cyclotomic polynomials are known to have symmetric sequences of coefficients, $X(q)$ also has symmetric coefficients.  We know from Proposition \ref{prop:splitting-descends-to-parabolic} that $X \subseteq [e,x_0]_L$, from the above argument that $x_0$ is a minimal non-separable element, and from Lemma \ref{lem:minimal-not-symmetric} that $[e,x_0]_L(q)$ is not symmetric.  In what remains of the proof, we show that $X$ agrees with $[e,x_0]_L$ for elements of low or high length, thus implying that $X(q)$ is not symmetric and reaching a contradiction.

Above we showed that $X \cap W_J = [e,x_0']_L=[e,x_{0,J}]_L$ for any $J \subsetneq \Delta$, thus
\[
\bigcup_{J \subsetneq \Delta} [e, x_{0,J}]_L = \bigcup_{J \subsetneq \Delta} \left([e, x_{0}]_L \cap W_J\right) \subseteq X.
\]
The Weyl group $W$ has rank $n-1$, and so any element of length at most $n-2$ lies in some parabolic subgroup.  Therefore $[e,x_0]_L$ and $X$ agree for elements of length $0,1,...,n-2$.  We can apply this same argument to the splitting $(\ph(X),\psi(Y))$, so $\ph(X)$ and $[e,x_0^{-1}]_L$ also agree for elements of small length.  But the map $\ph: [e,x_0]_L \to [e,x_0^{-1}]_L$ satisfies $\ell(\ph(u))=\ell(x_0)-\ell(u)$.  Thus we conclude that $X$ and $[e,x_0]_L$ also agree for elements of length $\ell(x_0),...,\ell(x_0)-(n-2)$.  By Lemma \ref{lem:minimal-not-symmetric} this is enough to imply that $X(q)$ is not symmetric, since the quantity $k$ in Lemma \ref{lem:minimal-not-symmetric} is at most $n-3$ for a minimal non-separable permutation.
\end{proof}

\begin{remark}
    In this section we have not relied on the assumption $W=S_n$ except in Lemmas \ref{lem:minimal-not-sep-structure} and \ref{lem:minimal-not-symmetric}, which describe the structure of minimal non-separable permutations and the order ideals they generate in weak order.  Thus a uniform proof of Conjecture \ref{conj:other-types-only-if} may be obtainable from a type-independent understanding of minimal non-separable elements.
\end{remark}

\section{Proof of Theorem \ref{thm:surjectivity}}
\label{sec:proof-of-surjectivity}

The proof of Theorem~\ref{thm:surjectivity} relies on the following technical lemma.
\begin{lem}\label{lem:surjmain}
Let $w,\pi,u\in S_n$ such that $u\leq_L w$ and $u\leq_R\pi$. If 
\begin{equation} \label{eq:not-reduced}
    \ell(wu^{-1})+\ell(u)+\ell(u^{-1}\pi)>\ell(wu^{-1}\pi)
\end{equation}
then there exists $u'>_B u$ in the strong Bruhat order such that $u'\leq_L w$ and $u'\leq_R\pi$.
\end{lem}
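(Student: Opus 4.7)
The plan is to apply the Deletion Property of the Coxeter group to the concatenation of reduced expressions for $a := wu^{-1}$, $u$, and $c := u^{-1}\pi$, and to rule out all but one case using a clean characterization of length-additivity via inversion sets. The hypothesis says exactly that the triple product $a \cdot u \cdot c = w u^{-1} \pi$ is not length-additive, so concatenating reduced expressions $a = s_{a_1}\cdots s_{a_k}$, $u = s_{u_1}\cdots s_{u_m}$, $c = s_{c_1}\cdots s_{c_\ell}$ gives a non-reduced word for $auc$. The Deletion Property then furnishes a pair of positions whose removal still expresses $auc$; these two positions cannot both lie in a single one of the three blocks since each block is individually reduced, so we fall into one of three cross-block cases $(AU)$, $(AC)$, $(UC)$.

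The central technical tool I would use is the standard identity $\ell(xy) = \ell(x) + \ell(y) - 2\,|I_\Phi(x) \cap I_\Phi(y^{-1})|$, which implies in particular that $xy$ is length-additive if and only if $I_\Phi(x) \cap I_\Phi(y^{-1}) = \emptyset$. Applied to the length-additive products $w = au$ and $\pi = uc$, this yields $I_\Phi(a) \cap I_\Phi(u^{-1}) = \emptyset$ and $I_\Phi(u) \cap I_\Phi(c^{-1}) = \emptyset$. I would use these to eliminate case $(AU)$: by the Strong Exchange Condition, deleting the chosen position from $a$'s reduced expression gives $a' = a s_\beta$ with $\ell(a') = \ell(a)-1$ and $\beta \in I_\Phi(a)$, and deleting the chosen position from $u$'s expression gives $u_0 = s_{\beta'} u$ with $\ell(u_0) = \ell(u)-1$ and $\beta' \in I_\Phi(u^{-1})$. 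The identity $a' u_0 c = auc$ collapses to $s_\beta s_{\beta'} = e$, forcing $\beta = \beta'$ and placing this common root in the empty intersection $I_\Phi(a) \cap I_\Phi(u^{-1})$, a contradiction. Case $(UC)$ is ruled out symmetrically using $I_\Phi(u) \cap I_\Phi(c^{-1}) = \emptyset$.

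We are therefore forced into case $(AC)$, where the analogous exchange produces reflections $s = s_\beta$ with $\beta \in I_\Phi(a)$ and $r = s_\gamma$ with $\gamma \in I_\Phi(c^{-1})$ satisfying $sur = u$, so $u' := su = ur$ differs from $u$ by a reflection on each side. The most delicate step is to verify $\ell(u') = \ell(u) + 1$ rather than some larger odd offset: since $\beta \in I_\Phi(a)$ and the intersection with $I_\Phi(u^{-1})$ is empty, we have $\beta \notin I_\Phi(u^{-1})$, so $\ell(su) > \ell(u)$; applying the Strong Exchange Condition backwards to the relation $u = u's$ then forces this difference to be exactly one. The weak order inclusions are immediate: $wu'^{-1} = wu^{-1}s = as = a'$ has length $\ell(a)-1 = \ell(w) - \ell(u')$, so $u' \leq_L w$; and $u'^{-1}\pi = ru^{-1}\pi = rc = c_0$ has length $\ell(c)-1 = \ell(\pi) - \ell(u')$, so $u' \leq_R \pi$. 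Since $u' = su$ with $s$ a reflection and $\ell(u') > \ell(u)$, we have $u' >_B u$, completing the proof. The main subtle point is pinning down $\ell(u') = \ell(u)+1$, since multiplying by a non-simple reflection can a priori change length by an arbitrary odd amount; the inverse-relation trick above is what makes this precise.
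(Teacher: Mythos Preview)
Your reduction to case $(AC)$ via the Deletion Property is correct and elegant: the arguments ruling out $(AU)$ and $(UC)$ using $I_\Phi(a)\cap I_\Phi(u^{-1})=\emptyset$ and $I_\Phi(u)\cap I_\Phi(c^{-1})=\emptyset$ are valid. The gap is the step ``applying the Strong Exchange Condition backwards to the relation $u=u'r$ then forces this difference to be exactly one.'' Strong Exchange, applied to a reduced word for $u'$ and the reflection $r$ with $\ell(u'r)<\ell(u')$, only says that $u=u'r$ is obtained by deleting \emph{one letter} from that word; it does \emph{not} say the resulting word is reduced, so you only recover $\ell(u)\le\ell(u')-1$, which you already had. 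For non-simple reflections the length can jump by any odd amount, and nothing in your setup prevents $\ell(u')=\ell(u)+3$ or more. Once that happens, $\ell(wu'^{-1})+\ell(u')>\ell(w)$ and $\ell(u'^{-1}\pi)+\ell(u')>\ell(\pi)$, so neither $u'\le_L w$ nor $u'\le_R\pi$ follows.

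Here is a concrete failure in $S_5$. Take $a=45123$, $u=21354$, $c=31425$, so $w=au=54132$, $\pi=uc=32514$, both products length-additive, while $auc=15342$ has length $5<11$. With reduced words $a=s_3s_2s_1s_4s_3s_2$, $u=s_1s_4$, $c=s_2s_1s_3$, one valid Deletion pair is (first letter of $a$, last letter of $c$): removing both still yields $15342$. This pair lies in case $(AC)$ and gives $s_\beta=(1\,5)$, $s_\gamma=(2\,4)$, hence $u'=s_\beta u=us_\gamma=25314$ with $\ell(u')=5=\ell(u)+3$. One checks $e_3-e_4\in I_\Phi(u')\setminus I_\Phi(w)$ and $e_3-e_5\in I_\Phi(u'^{-1})\setminus I_\Phi(\pi^{-1})$, so this $u'$ satisfies neither weak-order condition. (A \emph{different} Deletion pair in the same word, namely the third letter of $a$ and the first of $c$, does produce the good element $u'=23154$; but your argument supplies no mechanism for selecting among the available pairs.)

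This difficulty is exactly why the paper's proof is so much more intricate. There one first reduces, by induction on $\ell(wu^{-1})+\ell(u^{-1}\pi)$, to the situation where $wu^{-1}$ has a single left descent and $u^{-1}\pi$ a single right descent, so the non-reducedness is localized to a unique pair of wires. The element $u'$ is then built not as $u$ times a single transposition but via a cyclic shift along a carefully chosen chain of indices $I\cup J$ extracted from the wiring diagram, and both $u'\le_L w$ and $u'\le_R\pi$ are verified by a six-case analysis. If you want to rescue your approach, you would need at minimum a rule for choosing a Deletion pair that guarantees $\ell(u')=\ell(u)+1$; the paper's construction suggests that in general the right $u'$ is not of the form $u\cdot t$ for a single reflection $t$ at all.
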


We now observe that Theorem \ref{thm:surjectivity} follows from Lemma \ref{lem:surjmain}.

\begin{proof}[Proof of Theorem~\ref{thm:surjectivity}]
For any $x,y\in S_n$ and any maximal element $z$ in the Bruhat order such that $z\leq_L x$ and $z\leq_R y$, we have
\[
\ell(xz^{-1})+\ell(z)+\ell(z^{-1}y)=\ell(xz^{-1}y),
\]
since otherwise a strictly larger $z$ can be found via Lemma~\ref{lem:surjmain}.  In particular, there is at least one element $z \in S_n$ satisfying these conditions.

Now choose any $w\in S_n$ and we will show that $w\in (W/U)\cdot U$, where $U=[e,u]_R$. Let $z\in S_n$ be such that $z\leq_L w$, $z\leq_R u$ and $\ell(wz^{-1}u)=\ell(wz^{-1})+\ell(z)+\ell(z^{-1}u)$, as in the last paragraph. This implies that $\ell(wz^{-1})+\ell(u)=\ell(wz^{-1}u)$, so by definition $wz^{-1}\in W/U$. Then $w=(wz^{-1})\cdot z\in (W/U) \cdot U$ as desired.
\end{proof}

We finish with the rather technical proof of Lemma \ref{lem:surjmain}.

\begin{proof}[Proof of Lemma \ref{lem:surjmain}]
For $x_1,\ldots,x_k \in S_n$, we say that $x_1\cdots x_k$ is \textit{reduced} if \[
\ell(x_1\cdots x_k)=\ell(x_1)+\cdots+\ell(x_k).
\]
Thus (\ref{eq:not-reduced}) says exactly that $(wu^{-1})(u)(u^{-1}\pi)$ is not reduced. 

Our proof idea relies on wiring diagrams for permutations. The idea is to find a ``minimal" double crossing between two wires in a wiring diagram of $(wu^{-1})u(u^{-1}\pi)$ and then adjust the diagram accordingly, at least one double crossing must exist since the product is not reduced. 

Let $w'=wu^{-1}$ and $\pi'=u^{-1}\pi$. Then $w'u=w$ and $u\pi'=\pi$ are reduced and $w'u\pi'$ is not reduced. We will first reduce to the case where $\pi'$ has a single right descent by induction on $\ell(w')+\ell(\pi')$. As $\pi'$ is not the identity, $\pi'$ must have some right descent. Let $s_i$ be a right descent of $\pi'$ and write $\pi'=\pi^{(i)}s_i$, where $\ell(\pi')=1+\ell(\pi^{(i)})$. If $w'u\pi^{(i)}$ is not reduced, then by the induction hypothesis, we can find $u'>_B u$ where $u'\leq_L w$ and $u'\leq_R u\pi^{(i)}\leq_R\pi$ so we are done. Thus, we can assume that $w\pi^{(i)}$ is reduced for all right descents $s_i$ of $\pi$. At the same time, $\ell(w\pi')=\ell(w\pi^{(i)})-1=\ell(w)+\ell(\pi')-2$, so there exists a unique pair $1\leq a<b\leq n$ such that $\pi'(a)>\pi'(b)$ and $w\pi'(a)<w\pi'(b)$. Each right descent $s_i$ gives such a pair $a=i$, $b=i+1$. Therefore, $\pi'$ has a single right descent. By the same reasoning, we may also assume that $w'$ has a single left descent.

Suppose that the single right descent of $\pi'$ is $s_a$ and the single left descent of $w'$ is $s_b$. With the argument above, we can further assume that $(w')(u)(\pi's_a)$ is reduced and $(s_bw')(u)(\pi')$ is reduced. Together, we must have $w'u\pi'(a)=b$ and $w'u\pi'(a+1)=b+1$. Pictorially, choose any wiring diagrams for $w'$, $u$ and $\pi'$ and draw them side by side, label each wire by its right end point and view the wires as directed from right to left (see Figure~\ref{fig:surjmain}). The wires $a$ and $a+1$ are the only pair intersecting twice, once inside $\pi'$ and once inside $w'$. These two wires, drawn in black, enclose a region $R$. By the reducedness of $(w')(u)(\pi's_a)$ and $(s_bw')(u)(\pi')$, a wire entering $R$ from the top must leave from the bottom, and vice versa. Moreover, for two wires entering $R$ from the top, they can only intersect inside $u$, because $\pi'$ has a single right descent and $w'$ has a single left descent.

With the above intuition in mind, we now discuss the key construction. We first produce a set $I=\{a=i_0>i_1>\cdots>i_k\}$ by adding one element at a time. We start with $\{a=i_0\}$ and suppose that we have already obtained $i_0,\ldots,i_t$, for some $t\geq0$. Let $i_{t+1}$ be the largest number $i'<i_t$ such that $\pi'(i')>\pi'(a+1)$ and $u\pi'(i')<u\pi'(i_t)$. Notice that these two conditions automatically imply $\pi'(a+1)<\pi'(i_{t+1})<\pi'(i_t)$, $u\pi'(a+1)<u\pi'(i_{t+1})<u\pi'(i_t), b+1<w'u\pi'(i_{t+1})$, and $w'u\pi'(i_{t+1})<w'u\pi'(i_t)$ if $t\geq1$, since $\pi'$ has a single right descent, $w'$ has a single left descent and both $(w')(u)(\pi's_a)$ and $(s_bw')(u)(\pi')$ are reduced. Stop when no new elements can be found. It is possible that $k=0$. Pictorially, $I$ consists of wires $a$ and $i_1,\ldots,i_k$ in a decreasing order such that they all enter region $R$ from top right in $\pi'$ and leave region $R$ from bottom left in $w'$, such that the wire $i_{t+1}$ is chosen to be the closest one above that does not intersect the wire $i_t$. The wires $I\setminus\{a\}$ are colored red in the left side of Figure~\ref{fig:surjmain}. We then construct another set $J=\{a+1=j_0<j_1<\cdots<j_m\}$ in a similar way. Assume we have already constructed $\{a+1=j_0<j_1<\cdots<j_t\}$. Let $j_{t+1}$ be the smallest $j'>j_t$ such that $\pi'(j')<\pi'(a)$ and $u\pi'(j')>u\pi'(j_t)$. These conditions automatically imply that $\pi'(j_t)<\pi'(j_{t+1})<\pi'(a)$, $u\pi'(j_t)<u\pi'(j_{t+1})<u\pi'(a)$, $w'u\pi'(j_{t+1})<b$ and $w'u\pi'(j_{t+1})>w'u\pi'(j_t)$ if $t\geq1$. Similarly, in terms of wiring diagrams, $J$ consists of wire $a+1$ together with $j_1<\cdots<j_m$ such that all of them enter $R$ from bottom right in $\pi'$ and leave $R$ from top left in $w'$, such that the wire $j_{t+1}$ is chosen to be the closest one below $a+1$ that does not intersect the wire $j_t$. They are colored green in the left side of Figure~\ref{fig:surjmain}.

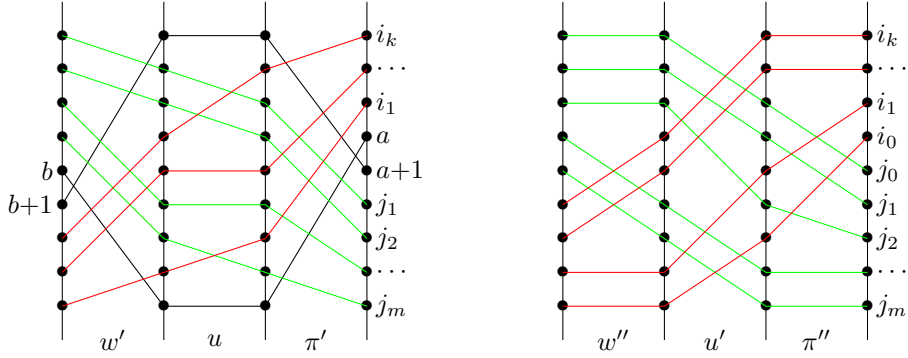
\begin{figure}[h!]
\centering
\begin{tikzpicture}[scale=0.45]
\pgfmathsetmacro{\K}{3}
\node at (0,-1) {$\bullet$};
\node at (0,-2) {$\bullet$};
\node at (0,-3) {$\bullet$};
\node at (0,-4) {$\bullet$};
\node at (0,-5) {$\bullet$};
\node[left] at (0,-5) {$b$};
\node at (0,-6) {$\bullet$};
\node[left] at (0,-6) {$b{+}1$};
\node at (0,-7) {$\bullet$};
\node at (0,-8) {$\bullet$};
\node at (0,-9) {$\bullet$};
\node at (\K,-1) {$\bullet$};
\node at (\K,-2) {$\bullet$};
\node at (\K,-3) {$\bullet$};
\node at (\K,-4) {$\bullet$};
\node at (\K,-5) {$\bullet$};
\node at (\K,-6) {$\bullet$};
\node at (\K,-7) {$\bullet$};
\node at (\K,-8) {$\bullet$};
\node at (\K,-9) {$\bullet$};
\node at (2*\K,-1) {$\bullet$};
\node at (2*\K,-2) {$\bullet$};
\node at (2*\K,-3) {$\bullet$};
\node at (2*\K,-4) {$\bullet$};
\node at (2*\K,-5) {$\bullet$};
\node at (2*\K,-6) {$\bullet$};
\node at (2*\K,-7) {$\bullet$};
\node at (2*\K,-8) {$\bullet$};
\node at (2*\K,-9) {$\bullet$};
\node at (3*\K,-1) {$\bullet$};
\node[right] at (3*\K,-1) {$i_k$};
\node at (3*\K,-2) {$\bullet$};
\node[right] at (3*\K,-2) {$\cdots$};
\node at (3*\K,-3) {$\bullet$};
\node[right] at (3*\K,-3) {$i_1$};
\node at (3*\K,-4) {$\bullet$};
\node[right] at (3*\K,-4) {$a$};
\node at (3*\K,-5) {$\bullet$};
\node[right] at (3*\K,-5) {$a{+}1$};
\node at (3*\K,-6) {$\bullet$};
\node[right] at (3*\K,-6) {$j_1$};
\node at (3*\K,-7) {$\bullet$};
\node[right] at (3*\K,-7) {$j_2$};
\node at (3*\K,-8) {$\bullet$};
\node[right] at (3*\K,-8) {$\cdots$};
\node at (3*\K,-9) {$\bullet$};
\node[right] at (3*\K,-9) {$j_m$};
\draw(0,0)--(0,-10);
\draw(\K,0)--(\K,-10);
\draw(2*\K,0)--(2*\K,-10);
\draw(3*\K,0)--(3*\K,-10);
\node at (\K/2,-10) {$w'$};
\node at (3*\K/2,-10) {$u$};
\node at (5*\K/2,-10) {$\pi'$};
\draw(0,-6)--(\K,-1)--(2*\K,-1)--(3*\K,-5);
\draw(0,-5)--(\K,-9)--(2*\K,-9)--(3*\K,-4);
\draw[green](0,-1)--(\K,-2)--(2*\K,-3)--(3*\K,-6);
\draw[green](0,-2)--(\K,-3)--(2*\K,-4)--(3*\K,-7);
\draw[green](0,-3)--(\K,-6)--(2*\K,-6)--(3*\K,-8);
\draw[green](0,-4)--(\K,-7)--(2*\K,-8)--(3*\K,-9);
\draw[red](0,-7)--(\K,-4)--(2*\K,-2)--(3*\K,-1);
\draw[red](0,-8)--(\K,-5)--(2*\K,-5)--(3*\K,-2);
\draw[red](0,-9)--(\K,-8)--(2*\K,-7)--(3*\K,-3);
\end{tikzpicture}
\qquad\qquad
\begin{tikzpicture}[scale=0.45]
\pgfmathsetmacro{\K}{3}
\node at (0,-1) {$\bullet$};
\node at (0,-2) {$\bullet$};
\node at (0,-3) {$\bullet$};
\node at (0,-4) {$\bullet$};
\node at (0,-5) {$\bullet$};
\node at (0,-6) {$\bullet$};
\node at (0,-7) {$\bullet$};
\node at (0,-8) {$\bullet$};
\node at (0,-9) {$\bullet$};
\node at (\K,-1) {$\bullet$};
\node at (\K,-2) {$\bullet$};
\node at (\K,-3) {$\bullet$};
\node at (\K,-4) {$\bullet$};
\node at (\K,-5) {$\bullet$};
\node at (\K,-6) {$\bullet$};
\node at (\K,-7) {$\bullet$};
\node at (\K,-8) {$\bullet$};
\node at (\K,-9) {$\bullet$};
\node at (2*\K,-1) {$\bullet$};
\node at (2*\K,-2) {$\bullet$};
\node at (2*\K,-3) {$\bullet$};
\node at (2*\K,-4) {$\bullet$};
\node at (2*\K,-5) {$\bullet$};
\node at (2*\K,-6) {$\bullet$};
\node at (2*\K,-7) {$\bullet$};
\node at (2*\K,-8) {$\bullet$};
\node at (2*\K,-9) {$\bullet$};
\node at (3*\K,-1) {$\bullet$};
\node[right] at (3*\K,-1) {$i_k$};
\node at (3*\K,-2) {$\bullet$};
\node[right] at (3*\K,-2) {$\cdots$};
\node at (3*\K,-3) {$\bullet$};
\node[right] at (3*\K,-3) {$i_1$};
\node at (3*\K,-4) {$\bullet$};
\node[right] at (3*\K,-4) {$i_0$};
\node at (3*\K,-5) {$\bullet$};
\node[right] at (3*\K,-5) {$j_0$};
\node at (3*\K,-6) {$\bullet$};
\node[right] at (3*\K,-6) {$j_1$};
\node at (3*\K,-7) {$\bullet$};
\node[right] at (3*\K,-7) {$j_2$};
\node at (3*\K,-8) {$\bullet$};
\node[right] at (3*\K,-8) {$\cdots$};
\node at (3*\K,-9) {$\bullet$};
\node[right] at (3*\K,-9) {$j_m$};
\draw(0,0)--(0,-10);
\draw(\K,0)--(\K,-10);
\draw(2*\K,0)--(2*\K,-10);
\draw(3*\K,0)--(3*\K,-10);
\node at (\K/2,-10) {$w''$};
\node at (3*\K/2,-10) {$u'$};
\node at (5*\K/2,-10) {$\pi''$};
\draw[green](0,-1)--(\K,-1)--(2*\K,-3)--(3*\K,-5);
\draw[green](0,-2)--(\K,-2)--(2*\K,-4)--(3*\K,-6);
\draw[green](0,-3)--(\K,-3)--(2*\K,-6)--(3*\K,-7);
\draw[green](0,-4)--(\K,-6)--(2*\K,-8)--(3*\K,-8);
\draw[green](0,-5)--(\K,-7)--(2*\K,-9)--(3*\K,-9);
\draw[red](0,-6)--(\K,-4)--(2*\K,-1)--(3*\K,-1);
\draw[red](0,-7)--(\K,-5)--(2*\K,-2)--(3*\K,-2);
\draw[red](0,-8)--(\K,-8)--(2*\K,-5)--(3*\K,-3);
\draw[red](0,-9)--(\K,-9)--(2*\K,-7)--(3*\K,-4);
\end{tikzpicture}
\caption{The initial wiring diagram (left) and the new wiring diagram used to construct $u'$ (right).}
\label{fig:surjmain}
\end{figure}

We are now ready to construct $u'$ from the above data of $I$ and $J$. We will simultaneously construct $w''$ and $\pi''$ such that $w''u'=w'u=w$ and $u'\pi''=u\pi'=\pi$. Let $\pi''(i_t)=\pi'(i_{t+1})$ for $t=0,\ldots,k-1$, $\pi''(i_k)=\pi'(a+1)$, $\pi''(j_t)=\pi'(j_{t+1})$ for $j=0,\ldots,m-1$ and $\pi''(j_m)=\pi'(a)$. Let $\pi''(c)=\pi'(c)$ if $c\notin I\cup J$. Then we let $u'=u\pi'(\pi'')^{-1}$ and $w''=w'u(u')^{-1}$. An example of this construction is given in Figure~\ref{fig:surjmain}. We remark that this construction does not preserve $w'u\pi'$, i.e., $w'u\pi'\neq w''u'\pi''$. 

Two things remain to be checked: that $u'>_B u$ in the Bruhat order and that $w''u'\pi''$ is reduced.

We first observe some further properties of $u$ and $u'$. Let $i_t'=\pi'(i_t)$ for $t=0,\ldots,k$ and $j_t'=\pi'(j_t)$ for $t=0,\ldots,m$. Let $I'=\{i_0'>\cdots>i_k'\}$, $J'=\{j_0'<\cdots<j_m'\}$ and $K'=I'\cup J'$. We say that an element $s$ has \textit{rank} $t$ in a set $K$ if $s$ is the $t^{th}$ smallest element in $K$. From construction, $i_0'$ is the largest element in $K'$, $j_0'$ is the smallest element in $K'$, $u(i_0')$ is the largest element in $u(K')$ and $u(j_0')$ is the smallest element in $u(K')$. Also, $u(i_0')>\cdots>u(i_k')$ and $u(j_0')<\cdots<u(j_m')$. Moreover, we then show that the rank of $u(j_t')$ in $u(K')$ is at most the rank of $j_t'$ in $K'$, for $t\geq1$. Let the rank of $j_t'$ in $K'$ be $t+x+1$; this means 
\[
i_k'<\cdots<i_{k-x+1}'<j_t'<i_{k-x}'<\cdots<i_1'.
\]
In the reduced decomposition $u\pi'=\pi$ we have $j_t>i_1>\cdots>i_{k-x}$ and $\pi'(j_t)<\pi'(i_{k-x})<\cdots<\pi'(i_1)$ so we need to have $u(j_t')<u(i_{k-x}')<\cdots<u(i_1')$. This means the rank of $u(j_t')$ in $u(K')$ is at most $t+x+1$. Dually, the rank of $u(i_t')$ in $u(K')$ is at least the rank of $i_t'$ in $K'$. In Figure~\ref{fig:surjmain}, we see that the green wires in region $u$ go from lower right to upper left and the red wires go from upper right to lower left.

To see that $u'>_Bu$, we recall some properties of $u$ and $u'$ and show that any pair of such permutations satisfy $u'>_Bu$:
\begin{itemize}
    \item there exists $I'=\{i_0'>\cdots>i_k'\}$ and $J'=\{j_0'<\cdots<j_m'\}$ such that $u'(c)=u(c)$ if $c\notin K':=I'\cup J'$;
    \item $u(i_0')>\cdots>u(i_k')$ and $u(j_0')<\cdots<u(j_m')$;
    \item $i_0'$ and $j_0'$ are the largest and smallest elements in $K'$ respectively;
    \item the rank of $u(i_t')$ in $u(K')$ is at least the rank of $i_t'$ in $K'$ for $t=0,\ldots,k$ and the rank of $u(j_t')$ in $u(K')$ is at most the rank of $j_t'$ in $K'$ for $t=0,\ldots,m$;
    \item $u'(i_t')=u(i_{t-1}')$ for $1\leq t\leq k$, $u'(i_0')=u(j_m')$, $u'(j_t')=u(j_{t-1}')$ for $1\leq t\leq m$ and $u'(j_0')=u(i_k')$.
\end{itemize}
We show how to obtain $u'$ from $u$ by applying some transpositions that increase the length after each step, using induction on $k+m$. When $k=m=0$, we have $i_0'>j_0'$, $u(i_0')>u(j_0)'$ and $u'(i_0')<u'(j_0')$, so $u'=u\cdot(i_0',j_0')$ which takes $u'$ above $u$ in the strong Bruhat order. For the induction step, we have that $i_0'$ is the largest element in $K'$ and $u(i_0')$ is the largest element in $u(K')$. The second largest element in $I'\cup J'$ is either $i_1'$ or $j_m'$. If $j_m'>i_1'$ (which is the case shown in Figure~\ref{fig:surjmain}), consider $u\cdot(j_m',i_0')$. As $j_m'<i_0'$ and $u(j_m')<u(i_0')$, we know $u<_B u\cdot(j_m',i_0')$ in the Bruhat order. Moreover, $u\cdot(j_m',i_0')(i_0')=u(j_m')=u'(i_0')$. We can then apply the induction hypothesis to the pair $u\cdot(j_m',i_0')$ and $u'$, with $j_m'$ deleted and $i_0'$ replaced by $j_m'$ (so $k$ stays the same and $m$ decreased by 1). The only nontrivial condition to check is the rank condition when the rank of $i_t'$ in $K'$ stays the same and the rank of $u(i_t')$ in $u(K')$ decreases. But this means that the rank of $i_t'$ in $K'$ is at least 1 less than the rank of $u(i_t')$ in $u(K')$. Thus, $u<_Bu\cdot(j_m',i_0')<_Bu'$ so we are done. If $j_m'<i_1'$, then similarly apply induction hypothesis to $u(i_1',i_0')$ and $u'$, with $i_0'$ deleted from $I'$. In both cases, the induction steps go through so we conclude that $u<_B u'$.

Finally, we check that $w''u'\pi''$ is reduced. This is the same as showing that for any $x,y$, the sequence 
\[
S=(x-y, \pi''(x)-\pi''(y), u'\pi''(x)-u'\pi''(y), w''u'\pi''(x)-w''u'\pi''(y))
\]
changes signs at most once. We say that $S$ changes signs in $\pi''$ if $x-y$ has a different sign than $\pi''(x)-\pi''(y)$ and so on. Recall that $I=\{i_0>\cdots>i_k\}$ and $J=\{j_0<\cdots<j_m\}$. If $x,y\in I$ or $x,y\in J$, by construction, the above sequence $S$ does not change any signs and if $x\in I$ and $y\in J$, the sequence $S$ changes sign exactly once. If $x,y\notin I\cup J$, the sequence $S$ equals the sequence $(x-y$, $\pi'(x)-\pi'(y)$, $u\pi'(x)-u\pi'(y)$, $w'u\pi'(x)-w'u\pi'(y))$, which changes sign at most once since the only nonreducedness of $w'u\pi'$ comes from $a$ and $a+1$. Without loss of generality, we now assume that $x=j_t\in J$ and $y\notin I\cup J$. Notice that $y\notin I\cup J$ means that $\pi'(y)=\pi''(y)$, $u\pi'(y)=u'\pi''(y)$ and $w'u\pi'(y)=w''u'\pi''(y)$. Assume for the sake of contradiction that the sequence $S$ changes signs at least twice. The rest is case work.

Case 1: $y<a$ and $S$ changes signs in $\pi''$ and $u'$. We have $y<j_t$, $\pi''(y)>\pi''(j_t)>\pi'(j_t)$ and $u'\pi''(y)<u'\pi''(x)=u\pi'(j_t)$, contradicting $u\pi'$ being reduced.

Case 2: $y<a$ and $S$ changes signs in $\pi''$ and $w''$ but not in $u'$. This means $w''u'\pi''(y)<b$. Together with $y<a$ and $\pi'(y)>\pi'(a+1)$, we see that in $w'u\pi'$, the wires $a+1$ and $y$ intersect more than once, so the sequence $a+1-y$, $\pi'(a+1)-\pi'(y)$, $u\pi'(a+1)-u\pi'(y)$, $w'u\pi'(a+1)-w'u\pi'(y)$ changes sign at least twice, a contradiction. 

Case 3: $y<a$ and $S$ changes signs in $w''$ and $u'$ but not in $\pi''$. Analogous to case 2, this implies that the wires $a+1$ and $y$ intersect twice in $w'u\pi'$ producing a contradiction.

Case 4: $y>a$ and $S$ changes signs in $\pi''$ and $w''$ but not in $u'$. Notice that $\pi''(j_t)=\pi'(j_{t+1})>\pi'(j_t)$ when $t\leq m-1$, and $\pi''(j_m)=\pi'(a)>\pi'(j_m)$. Thus, if $y<j_t$ and $\pi''(y)>\pi''(j_t)>\pi'(j_t)$, then $\pi'$ has more than one right descent, which is a contradiction. This means that the only possibility is that $y>j_t$ and $\pi'(j_t)<\pi''(y)<\pi''(j_t)$. Then $u'\pi''(y)<u'\pi''(j_t)$ and $w''u'\pi''(y)>w''u'\pi''(y)$. At the same time, $\pi'(y)>\pi'(j_t)$, $u\pi'(y)=u'\pi''(y)<u'\pi''(j_t)=u\pi'(y)$. And $w'u\pi'(y)=w''u'\pi''(y)>w''u'\pi''(j_t)=w''u\pi'(j_t)=w'u\pi'(j_{t+1})>w'u\pi'(j_{t+1})$. This says that $w'u$ is not reduced, by considering wires $\pi'(j_t)$ and $\pi'(y)$, which is a contradiction.

Case 5: $y>a$ and $S$ changes signs in $\pi''$ and $u'$. This is the key case that justifies our construction. As discussed in the last case, the fact $S$ changes signs in $\pi''$ implies that $y>j_t$ and $\pi'(j_t)<\pi''(y)<\pi''(j_t)$. Then $u'\pi''(y)>u'\pi''(j_t)$ so $u\pi'(y)>u\pi'(j_t)$. Say $t<m$ for now. Then $\pi''(j_t)=\pi'(j_{t+1})$. Because $\pi'$ has a single descent, $\pi'(j_t)<\pi'(y)<\pi'(j_{t+1})$ implies that $j_t<y<j_{t+1}$. Recall that to construct $J$, $j_{t+1}$ is the smallest $j'>j_t$ such that $\pi'(j')<\pi'(a)$ and $u\pi'(j')>u\pi'(j_t)$. But we can take such $j'=y<j_{t+1}$, contradict the definition of $J$. If $t=m$, the same argument goes through as $y$ would have been added to $J$.

Case 6: $y>a$ and $S$ changes signs in $u'$ and $w''$. The is exactly same as in case 5. Running the analysis from left to right, instead of from right to left as in the last case, gives the same contradiction to the construction of $J$.
\end{proof}

\section*{Acknowledgements}
We wish to thank Anders Bj\"{o}rner for alerting us to important references and Vic Reiner, Richard Stanley, and Igor Pak for helpful comments.  We are especially grateful to our advisor Alex Postnikov for his observation that separable elements may be related to faces of graph associahedra.

\bibliographystyle{plain}
\bibliography{separable2}

\begin{thebibliography}{10}

\bibitem{Avis}
David Avis and Monroe Newborn.
\newblock On pop-stacks in series.
\newblock {\em Utilitas Math.}, 19:129--140, 1981.

\bibitem{Billey2005Smoothness}
Sara Billey and Alexander Postnikov.
\newblock Smoothness of {S}chubert varieties via patterns in root subsystems.
\newblock {\em Adv. in Appl. Math.}, 34(3):447--466, 2005.

\bibitem{Bjorner-Brenti}
Anders Bj\"{o}rner and Francesco Brenti.
\newblock {\em Combinatorics of {C}oxeter groups}, volume 231 of {\em Graduate
  Texts in Mathematics}.
\newblock Springer, New York, 2005.

\bibitem{Bjorner-Wachs}
Anders Bj\"{o}rner and Michelle~L. Wachs.
\newblock Generalized quotients in {C}oxeter groups.
\newblock {\em Trans. Amer. Math. Soc.}, 308(1):1--37, 1988.

\bibitem{BW-lin-ext}
Anders Bj\"{o}rner and Michelle~L. Wachs.
\newblock Permutation statistics and linear extensions of posets.
\newblock {\em J. Combin. Theory Ser. A}, 58(1):85--114, 1991.

\bibitem{BBS}
B\'{e}la Bollob\'{a}s, Graham Brightwell, and Alexander Sidorenko.
\newblock Geometrical techniques for estimating numbers of linear extensions.
\newblock {\em European J. Combin.}, 20(5):329--335, 1999.

\bibitem{Bose}
Prosenjit Bose, Jonathan~F. Buss, and Anna Lubiw.
\newblock Pattern matching for permutations.
\newblock {\em Inform. Process. Lett.}, 65(5):277--283, 1998.

\bibitem{Carr-Devadoss}
Michael~P. Carr and Satyan~L. Devadoss.
\newblock Coxeter complexes and graph-associahedra.
\newblock {\em Topology Appl.}, 153(12):2155--2168, 2006.

\bibitem{DeConcini-Procesi}
C.~De~Concini and C.~Procesi.
\newblock Wonderful models of subspace arrangements.
\newblock {\em Selecta Math. (N.S.)}, 1(3):459--494, 1995.

\bibitem{Dittmer-Pak}
Samuel {Dittmer} and Igor {Pak}.
\newblock {Counting linear extensions of restricted posets}.
\newblock {\em arXiv e-prints}, 2018.
\newblock arXiv:1802.06312 [math.CO].

\bibitem{First-separable-paper}
Christian Gaetz and Yibo Gao.
\newblock Separable elements in {W}eyl groups.
\newblock {\em Advances in Applied Mathematics}, 113:101974, 2020.

\bibitem{PPM}
Alejandro~H. Morales, Igor Pak, and Greta Panova.
\newblock Why is {P}i less than twice {P}hi?
\newblock {\em Amer. Math. Monthly}, 125(8):715--723, 2018.

\bibitem{beyond-paper}
Alexander Postnikov.
\newblock Permutohedra, associahedra, and beyond.
\newblock {\em Int. Math. Res. Not. IMRN}, 6:1026--1106, 2009.

\bibitem{Shapiro}
Louis Shapiro and A.~B. Stephens.
\newblock Bootstrap percolation, the {S}chr\"{o}der numbers, and the
  {$N$}-kings problem.
\newblock {\em SIAM J. Discrete Math.}, 4(2):275--280, 1991.

\bibitem{Sidorenko}
A.~Sidorenko.
\newblock Inequalities for the number of linear extensions.
\newblock {\em Order}, 8(4):331--340, 1991/92.

\bibitem{Wei2012Product}
Fan Wei.
\newblock Product decompositions of the symmetric group induced by separable
  permutations.
\newblock {\em European J. Combin.}, 33(4):572--582, 2012.

\bibitem{West}
Julian West.
\newblock Generating trees and the {C}atalan and {S}chr\"{o}der numbers.
\newblock {\em Discrete Math.}, 146(1-3):247--262, 1995.

\end{thebibliography}
\end{document}